\author{Matthew D. Kvalheim}
\address[Kvalheim, Koditschek]
{School of Engineering and Applied Science, University of Pennsylvania,
	Philadelphia, PA 19104, USA}	
\author{Daniel E. Koditschek}
\email{kvalheim@seas.upenn.edu, kod@seas.upenn.edu}
\title[Necessary conditions for feedback stabilization and safety]{Necessary conditions for feedback stabilization and safety}
\newcommand{\concept}[1]{\textbf{#1}}
\newcommand{\N}{\mathbb{N}}
\newcommand{\Z}{\mathbb{Z}}
\newcommand{\R}{\mathbb{R}}
\newcommand{\C}{\mathbb{C}}
\newcommand{\F}{\mathbb{F}}
\newcommand{\slot}{\,\cdot\,} 
\newcommand{\T}{T}
\newcommand{\D}{\T}
\newcommand{\id}{\textnormal{id}}
\newcommand{\rank}{\textnormal{rank}}
\newcommand{\interior}{\textnormal{int}}
\newcommand{\cl}{\textnormal{cl}}
\newcommand{\dom}{\textnormal{dom}}
\newcommand{\tor}{\mathbb{T}}
\newcommand{\sph}{\mathbb{S}}
\newcommand{\mob}{\mathbb{M}}
\newcommand{\klein}{\mathbb{K}}
\newcommand{\SE}{\mathsf{SE}}
\newcommand{\SO}{\mathsf{SO}}
\newcommand{\Hc}{\check{H}}
\newcommand{\Hom}{H}
\newcommand{\ctrl}{\mathcal{U}}
\newcommand{\st}{M}
\newcommand{\att}{A}
\newcommand{\cV}{\mathcal{V}}
\newcommand{\cW}{\mathcal{W}}
\newcommand{\cD}{\mathcal{D}}
\newcommand{\dist}[2]{\textnormal{dist}(#1, #2)}
\newcommand{\ip}[2]{\langle #1, #2 \rangle}
\DeclarePairedDelimiter\norm{\lVert}{\rVert}
\newtheorem{Lem}{Lemma}
\newtheorem{Th}{Theorem}
\newtheorem{Prop}{Proposition}
\newtheorem*{Th-non}{Theorem}
\newcommand{\Thnon}{Th-non}
\newcommand{\thistheoremname}{}
\newtheorem*{genericthm}{\thistheoremname}
{\renewcommand{\thistheoremname}{Theorem~\ref{#1}$'$}%
	\begin{genericthm}}
	{\end{genericthm}}
\theoremstyle{definition}
\newtheorem{Def}{Definition}
\newtheorem*{Def*}{Definition}
\newtheorem{Ex}{Example}
\newtheorem{Rem}{Remark}
\begin{document}
	
	\maketitle
	\begin{abstract}	
	Brockett's necessary condition yields a test to determine whether a system can be made to stabilize about some operating point via continuous, purely state-dependent feedback.
	For many real-world systems, however, one wants to stabilize sets which are more general than a single point.
	One also wants to control such systems to operate safely by making obstacles and other ``dangerous'' sets repelling.
	 
	We generalize Brockett's necessary condition to the case of stabilizing general compact subsets having a nonzero Euler characteristic in general ambient state spaces (smooth manifolds).
	Using this generalization, we also formulate a necessary condition for the existence of ``safe'' control laws.
	We illustrate the theory in concrete examples and for some general classes of systems including a broad class of nonholonomically constrained Lagrangian systems.
	We also show that, for the special case of stabilizing a point, the specialization of our general stabilizability test is stronger than Brockett's.
	\end{abstract}

    \tableofcontents	

\section{Introduction}\label{sec:intro}
In a seminal paper \cite{brockett1983asymptotic}, Brockett considered control systems of the form ($\dot{x}=dx/dt)$
\begin{equation}\label{eq:xdot-eq-fxu}
\dot{x} = f(x,u),
\end{equation}
and proved a beautiful theorem providing three necessary conditions for the existence of a continuously differentiable feedback law $u(x)$ rendering some specified point $x= x_0$ asymptotically stable. 
The third of these conditions, which we will simply refer to as \concept{Brockett's necessary condition}, is as follows.
Here $x\in \R^n$, $u\in \R^m$, and $f\colon \R^n\times \R^m\to \R^n$ is continuously differentiable.

\begin{\Thnon}[{\cite[Thm~1.(iii)]{brockett1983asymptotic}}]
If a continuously differentiable control law $u(x)$ rendering $x_0$ an asymptotically stable equilbrium exists, then the image of the mapping $(x,u)\mapsto f(x,u)$ contains a neighborhood of $0\in \R^n$. 
\end{\Thnon}
Brockett indicates one proof based on Wilson's converse Lyapunov function theorem \cite{wilson1967structure,wilson1969smooth,fathi2019smoothing} and the Poincar\'{e}-Hopf index theorem.
Since Wilson's theorem applies to merely continuous vector fields under the assumption of unique integrability, and since the Poincar\'{e}-Hopf theorem applies to merely continuous vector fields \cite{pugh1968generalized}\footnote{The statements of the Poincar\'{e}-Hopf theorem in \cite[pp.~32--35, p.~134]{milnor1965topology,guillemin1974differential} refer to smooth vector fields, but the proofs in these references can be used to prove the result for continuous vector fields with only superficial changes. Cf. \cite[p.~41]{milnor1965topology}: ``...our strong differentiability assumptions are not really necessary...''.}, the Poincar\'{e}-Hopf version of Brockett's proof actually yields the same result assuming only that the vector field $x \mapsto f(x,u(x))$ is continuous and has unique trajectories. 
That uniquely integrable continuous feedback suffices was also noted by \cite{zabczyk1989some}, who studied the problem via different techniques.

To paraphrase  \cite[pp.~1--2]{brockett1983asymptotic}, the theorem above is powerful enough to show that there is no continuous feedback law $(u,v)=(u(x,y,z),v(x,y,z))$ with unique trajectories making the origin asymptotically stable for the ``nonholonomic integrator'' or ``Heisenberg system'' \cite{bloch2015nonholonomic}
\begin{equation}\label{eq:heisenberg-intro}
\begin{split}
\dot{x} &= u\\
\dot{y}&= v\\
\dot{z} &= yu-xv. 
\end{split}
\end{equation}
This provides a counterexample to what was, in 1983, the oft-repeated conjecture that a reasonable form of local controllability implies the existence of a stabilizing control law \cite[p.~2]{brockett1983asymptotic}.

\subsection{Contributions and organization of the paper}
Motivated by problems arising in robotics and other settings of underactuated control systems, this paper introduces tests to determine if it is possible to use continuous feedback to make a system stabilize around---or, alternatively, operate safely relative to---some subset of state space.
One of our goals is to introduce practicable ``no-go'' theorems relieving the fruitless expenditure of computational resources \cite{papachristodoulou2005tutorial} searching for control Lyapunov functions \cite{sontag1999clf} or control barrier functions \cite{ames2017cbf,ames2019control} when they cannot exist.  
Another goal is to introduce mathematical tests precluding the emergence of hypothesized biological ``templates'' \cite{full1999templates,seipel2017conceptual} from hypothesized neuromechanical control architectures \cite{revzen2009towards}.

We generalize Brockett's necessary condition to one for the stabilization of any compact subset $\att$ of a smooth manifold $\st$ for a control system on $\st$, subject to the limitation that the Euler characteristic $\chi(\att)$ of $\att$ is nonzero.\footnote{In this paper, the Euler characteristic is defined using \v{C}ech-Alexander-Spanier cohomology; as we will discuss in \S \ref{sec:euler}, this Euler characteristic is always well-defined for a compact asymptotically stable subset of a continuous flow on a manifold, and it agrees with the standard Euler characteristic for submanifolds and other ``reasonable'' subsets.}
In the special case that $A$ is a point, we show (Ex.~\ref{ex:strictly-stronger}) that this necessary condition is stronger than Brockett's.
Our necessary condition for stabilization yields a corresponding condition for safety---an obstruction to the possibility of rendering a ``bad'' set $B$ repelling via continuous feedback, subject to the limitation again that the complement of $B$ is precompact with nonzero Euler characteristic. 

These results are powerful enough to prove the following additional facts about \eqref{eq:heisenberg-intro}.
(A portion of the first fact was previously noted in \cite{mansouri2007local}.) 
\begin{itemize}
\item If $\att$ is a compact connected submanifold with or without boundary, then there is no continuous feedback law $(u,v)=(u(x,y,z),v(x,y,z))$ with unique trajectories making $A$ asymptotically stable if $\att$ is not homeomorphic to a circle, cylinder, torus, M\"{o}bius band, or a $3$-dimensional submanifold with boundary (just because a compact $3$-dimensional submanifold \emph{without} boundary cannot embed in $\R^3$).
\item More generally, an arbitrary compact subset $\att$ cannot be rendered asymptotically stable by such a feedback law if the real \v{C}ech-Alexander-Spanier cohomology of $\att$ is not finite-dimensional (two examples are given below Def.~\ref{def:euler-cech} in \S \ref{sec:euler-asymp-stable}) or if the Euler characteristic of $\att$ defined using \v{C}ech-Alexander-Spanier cohomology is nonzero. 
\item If $S = \R^3\setminus B$ is bounded, where $B$ is some hypothetical ``bad'' set, then such a feedback law cannot ensure that $\cl(S)$ immediately flows into $\interior(S)$ if the real \v{C}ech-Alexander-Spanier cohomology of $S$ is not finite-dimensional or if the Euler characteristic of $S$ is nonzero. 
\end{itemize}
These facts also hold, e.g., for the ``kinematic unicycle'' model 
\begin{equation*}
\begin{split}
\dot{x} &= \cos(\theta)u\\
\dot{y}&= \sin(\theta)u\\
\dot{\theta} &= v 
\end{split}
\end{equation*}
on $\R^2 \times \sph^1$ commonly employed in the robot motion planning literature \cite{choset2005principles,bry2011rapidly,palmieri2014novel,park2015feedback,pacelli2018integration}  for modeling differential-drive or even legged  \cite{vasilopoulos2018sensor,vasilopoulos2020reactive,roberts2020examples} robots, and can be used to show that certain tasks of importance for applications are impossible to achieve using continuous feedback alone.
Results such as these underscore the importance of discontinuous (or ``hybrid'') \cite{bloch1992control, kolmanovksy1994discontinuous,khennouf1995construction, bloch1996stabilization,astolfi1996discontinuous,bloch2000stabilization,agrachev2001lie,prieur2005robust}   
or time-varying \cite{de1991exponential,coron1992global,pomet1992explicit,teel1992nonholonomic,m1993convergence,walsh1993stabilization,sordalen1995exponential,morin1999design,morin2000control,tian2002exponential,urakubo2015feedback,urakubo2018stability} stabilization strategies for control systems.

The remainder of this paper is organized as follows.
After discussing related work, we consider a version of the Euler characteristic of a compact asymptotically stable set in \S \ref{sec:euler} and explain why it is well-defined.
We establish our two main results (Theorems~\ref{th:generalized-brockett} and \ref{th:safety}) in \S \ref{sec:main} and illustrate them with examples in \S \ref{sec:examples}.
In \S \ref{sec:applications} we prove results showing that our necessary conditions for feedback stabilization and safety are not satisfied by some general classes of control systems including a class of nonholonomic Lagrangian systems.
In \S \ref{sec:compare} we compare Theorem~\ref{th:generalized-brockett} in the special case of point stabilization with results of Brockett and Coron. 
In \S \ref{sec:conclusion} we summarize our contributions and discuss prospects for future work.
The paper concludes with two appendices.
In App.~\ref{app:uniq-int-lyap} we review some facts about continuous and uniquely integrable vector fields, asymptotic stability, and Lyapunov functions.
In App.~\ref{app:low-dim-mfld-euler-zero} we determine which compact connected manifolds with boundary of dimension less than or equal to $2$ have vanishing Euler characteristic.  
\subsection{Related work}\label{sec:related-work}
Following the work of Brockett described above, \cite{zabczyk1989some} studied the problem of point stabilization using different techniques and observed that Brockett's necessary condition applies with uniquely integrable continuous state feedback, as opposed to the continuously differentiable feedback assumed in \cite[Thm~1.(iii)]{brockett1983asymptotic}.
Necessary conditions stronger than Brockett's for stabilization of equilibria were formulated in \cite{coron1990necessary} in terms of homology and in terms of stable homotopy groups; see also \cite[p.~292]{coron2007control}. 
In \cite{coron1992global} it was shown that, if time-varying feedback is allowed, then a certain accessibility condition implies the existence of smooth feedback stabilizing a point.
Further necessary conditions for  stabilization of an equilibrium based on \cite{coron1990necessary} were given in \cite{ishikawa1998equilibria}.
In \cite{orsi2003necessary} the observation of \cite{zabczyk1989some} was strengthened by further showing that unique integrability of the closed-loop vector field is not needed. 
We note that necessary conditions have also been obtained for asymptotic stabilization of a point in discrete-time \cite{lin1994design,kalabic2017mpc} and discontinuous \cite{ryan1994brockett,clarke1998asymptotic} systems, and for stabilization with an exponential convergence rate \cite{gupta2018linear,christopherson2020feedback}.

Most relevant to the present paper are necessary conditions for stabilizing sets more general than points via continuous feedback.
For control systems on $\R^n$, Byrnes stated a necessary condition reminiscent of Brockett's for rendering a compact set $\att\subset \R^n$ \emph{globally}\footnote{Note that being \emph{globally} asymptotically stable imposes strong topological restrictions; cf. Prop.~\ref{prop:cech-cohom-attractor}.
For example, any compact $2$-dimensional manifold (without boundary) smoothly embedded in $\R^3$ can be made asymptotically stable for the flow of some smooth vector field, but there does not exist a continuous flow on $\R^3$ rendering any of these submanifolds \emph{globally} asymptotically stable. 
Our generalization (Theorem~\ref{th:generalized-brockett}) of Brockett's necessary condition is for \emph{local} asymptotic stabilization, and is considerably broader than \cite[Cor.~4.1]{byrnes2008brockett} with respect to stabilization  while also affording a dual set of conclusions regarding safety which would be difficult even to translate into that ``global'' framework.} asymptotically stable \cite[Cor.~4.1]{byrnes2008brockett}.
In general, even for fully actuated systems, the mismatch between the topologies of a state space $M$ and subset $A\subset M$ precludes global stabilization of $A$ unless a ``cut'' is removed from $M$ \cite{baryshnikov2014centipede}, which is nonempty under mild assumptions when the stabilization problem's ``topological perplexity'' \cite{baryshnikov2021topological} is nonzero.
For the problem of \emph{local} asymptotic stabilization\footnote{\cite[Thm~12.5]{kappos1994role} states that a necessary condition for continuous feedback (not necessarily global) stabilization of a general compact subset $\att$ of $\R^{n+1}$ is that the (Riemannian \cite[Def.~12.3]{kappos1994role}) Gauss map $S\to \sph^{n}$ from a compact regular level set $S$ of a smooth Lyapunov function to the sphere $\sph^{n}$ is surjective.
However, the Gauss map of \emph{any} compact (boundaryless) hypersurface $S\subset \R^{n+1}$ is always surjective (since for any $v\in \sph^n$, the Riemannian Gauss map sends to $v$ any global maximizer of the ``height function'' $S\ni s\mapsto \ip{v}{s} \in \R$). 
Thus, while correct, this condition provides exactly zero stabilizability information. If $F$ is a closed-loop vector field rendering $\att$ asymptotically stable, one could instead consider the ``vector field Gauss map'' $\frac{F}{\norm{F}}\colon S\to \sph^n$ as in \cite{kappos1995necessary}, but examples of asymptotically stable periodic orbits in $\R^3$ show that surjectivity of this map is \emph{not} necessary for asymptotic stability.

Indeed, such an example is given by the asymptotically stable limit cycle with image $\att = \{r=1, z=0\}$ and basin $B(\att) = \{r>0\}$ for the smooth vector field $F = r(1-r^2) \partial_r + \partial_\theta - z \partial_z$ on $\R^3$.
Here $r^2 = x^2 + y^2, \quad \partial_r = (x\partial_x + y\partial_y)/r , \quad \text{and} \quad  \partial_\theta = x\partial_y - y \partial_x.$
Let $S\subset B(\att)$ be any regular level set of any smooth Lyapunov function for $\att$.
Since $F$ is nowhere-parallel to $\partial_z$ on $B(\att)$, the vector field Gauss map $\frac{F}{\norm{F}}\colon S\to \sph^2$ misses the north and south poles, so it is not surjective.
(At a high level, this example is possible because the Euler characteristic of $\att$ is $0$.)\label{foot:kappos}} 
we consider, Mansouri generalized Coron's homological necessary condition \cite{coron1990necessary} to one for stabilizing compact submanifolds of $\R^n$ \cite{mansouri2007local,mansouri2010topological}, and also introduced refined conditions for certain ``distributed'' stabilization problems \cite{mansouri2013topological,mansouri2015topological}.

For submanifolds of $\R^n$, we expect that the stabilizability tests furnished by \cite{mansouri2007local,mansouri2010topological} are stronger than those furnished by Theorem~\ref{th:generalized-brockett} (cf. \S\ref{sec:coron}).
However, Theorem~\ref{th:generalized-brockett} has the following advantages.
First, unlike the results of \cite{mansouri2007local,mansouri2010topological}, Theorem~\ref{th:generalized-brockett} applies to the stabilization of general compact subsets of general nonlinear smooth manifold state spaces.
In particular, the generality afforded by non-Euclidean state spaces enables the treatment of examples (such as Ex.~\ref{ex:kinematic-unicycle-1}, \ref{ex:satellite}, \ref{ex:vert-rolling-disk}) that do not satisfy the hypotheses of \cite{mansouri2007local,mansouri2010topological}.
Second, just as it is often simpler to apply \cite[Thm~1.(iii)]{brockett1983asymptotic} than the stronger \cite[Thm~2]{coron1990necessary}, it seems simpler to apply Theorem~\ref{th:generalized-brockett} than to apply the homological theorems of \cite{mansouri2007local,mansouri2010topological} in many cases.

Regarding our necessary condition for safety (Theorem~\ref{th:safety}) rather than stabilization, which is the second of our two main results, we are not aware of any closely related prior literature. 

\section{The Euler characteristic}\label{sec:euler}
An introductory discussion motivating the Euler characteristic is given in \S \ref{sec:euler-motivation}.
The reader familiar with (co)homology theory can proceed straight to \S \ref{sec:euler-asymp-stable}, where we define the Euler characteristic of an asymptotically stable set using \v{C}ech-Alexander-Spanier cohomology and review some relevant facts.   
\subsection{Motivation: the Euler characteristic of a CW complex}\label{sec:euler-motivation}
As motivation for our definition of the Euler characteristic, consider a topological space $X$ constructed as follows \cite[p.~519]{hatcher2001algebraic}.
\begin{itemize}
\item Start with a finite set $X^0 = \{x_1,\ldots,x_n\}$ of \concept{$0$-cells} equipped with the discrete topology.
\item Inductively, form the \concept{$n$-skeleton} $X^n$ from $X^{n-1}$ by attaching finitely many copies of the $n$-dimensional disk $D^n=\{x\in \R^n\colon \norm{x}\leq 1\}$ via continuous maps $\partial D^n \to X^{n-1}$ from the boundaries; the interiors of the copies of $D^n$ are called \concept{$n$-cells}.\footnote{E.g., $X^1$ is obtained from $X^0$ by gluing the endpoints of finitely many intervals to the points comprising $X^0$.}  
\item Stop at some finite step $n\geq 0$ and define $X\coloneqq X^n$.
\end{itemize}
The space $X$ constructed in this way is called a \concept{finite CW complex} (or, sometimes, ``finite cell complex'') \cite[p.~5]{hatcher2001algebraic} of \concept{dimension} $n$.
A finite CW complex is, in particular, a compact metrizable space;  finite CW complexes are a useful and broad class of topological spaces. 
For example, every compact smooth manifold with boundary has a \concept{finite CW decomposition}, i.e., is homeomorphic to a finite CW complex \cite[Thm~3.3]{morita2001geometry}. 

The \emph{Euler characteristic} $\chi(X)$ of a finite CW complex $X$ is defined to be \cite[p.~6]{hatcher2001algebraic}:
\begin{equation}\label{eq:euler-CW}
\chi(X)\coloneqq (\textnormal{number of even-dimensional cells}) - (\textnormal{number of odd-dimensional cells}).
\end{equation}
A finite CW decomposition of a topological space $X$ is generally not unique.
However, the number on the right side of \eqref{eq:euler-CW} does not depend on the numbers of cells in any specific choice of finite CW decomposition of a topological space $X$; this justifies the undecorated notation $\chi(X)$.
One way to prove this is to show that, for a finite CW complex $X$ \cite[Thm~2.44, Cor~3A.4,~Prop~3A.5]{hatcher2001algebraic},
\begin{equation}\label{eq:euler-homology}
\chi(X)=\sum_{i=0}^\infty (-1)^i\dim \Hom_i(X;\R) =\sum_{i=0}^{\dim X} (-1)^i\dim \Hom_i(X;\R).
\end{equation}
Here $\Hom_i(X;\R)$ is the $i$-th singular homology group with real coefficients, a finite-dimensional real vector space.
It follows that $\chi(X)$ does not depend on the specific finite CW decomposition of $X$, because \eqref{eq:euler-CW} and \eqref{eq:euler-homology} are equivalent, and because $\Hom_i(X;\R)$ depends only on the topology of $X$.
The latter statement is also true of the $i$-th singular real \emph{co}homology group $\Hom^i(X;\R)$, and in fact one obtains an equivalent definition of $\chi(X)$ by replacing each $\Hom_i(X;\R)$ in \eqref{eq:euler-homology} with $\Hom^i(X;\R)$.

Using either \eqref{eq:euler-homology} or its cohomological variant just mentioned, one obtains a definition of $\chi(X)$ for \emph{any} topological space $X$ not necessarily admitting a CW decomposition, as long as $\dim \Hom_i(X;\R)$ (or $\dim \Hom^i(X;\R)$) is finite for all $i$ and zero for $i$ sufficiently large.
However, many spaces $X$ (including even certain compact subsets of $\R^n$; see the two examples mentioned after Def.~\ref{def:euler-cech} in \S\ref{sec:euler-asymp-stable}) do not satisfy these requirements.
To define $\chi(X)$ for such spaces, one possibility is to replace the singular homology groups in \eqref{eq:euler-homology} with those from an alternative (co)homology theory (of which there are several) which might be better behaved.
As will be explained in \S \ref{sec:euler-asymp-stable}, the \concept{real \v{C}ech-Alexander-Spanier cohomology} groups $\Hc^i(X;\R)$ will be of particular use in our setting (this terminology follows \cite[p.~371]{massey1991basic}).
\begin{Rem}\label{rem:euler-equiv-defns}
When $X$ is a paracompact, Hausdorff, and locally contractible space (such as a manifold or CW complex), $\dim \Hc^i(X;\R) = \dim \Hom^i(X;\R)=\dim \Hom_i(X;\R)$ \cite[pp.~334, 340]{spanier1966algebraic}.
Thus, for such a space $X$, a definition of the Euler characteristic equivalent to \eqref{eq:euler-homology} can be obtained by replacing $\Hom_i(X;\R)$ with $\Hc^i(X;\R)$ in \eqref{eq:euler-homology}.
As previously remarked, such a definition is also equivalent to \eqref{eq:euler-CW} in the case that $X$ is a finite CW complex.
\end{Rem}

\subsection{The Euler characteristic of an asymptotically stable set}\label{sec:euler-asymp-stable}
With the discussion in \S\ref{sec:euler-motivation} as motivation, we first state the definition of Euler characteristic that we will use.
We denote by $\Hc^i(X;\R)$ the $i$-th real \v{C}ech-Alexander-Spanier cohomology group of a topological space $X$ and refer the reader to \cite[Ch.~6, Sec.~ 4 and 5]{spanier1966algebraic} or \cite[Ch.~8]{massey1978homology} for the definition and an extensive treatment; alternatively, the reader is referred to \cite[Sec.~3]{gobbino2001topological} and \cite[pp.~371--375]{massey1991basic} for brief introductions aligned with our needs.

\begin{Def}\label{def:euler-cech}
Let $X$ be a topological space. 
Assume that the dimensions $\dim \Hc^i(X;\R)$ of the real \v{C}ech-Alexander-Spanier cohomology groups are finite and vanish for $i$ sufficiently large.
Then the \concept{Euler characteristic } $\chi(X)$ of $X$ is defined by
\begin{equation}\label{eq:euler-cech-def}
\chi(X)\coloneqq \sum_{i=0}^\infty (-1)^i\dim \Hc^i(X;\R).
\end{equation}
\end{Def}
Next, suppose that the compact subset $\att\subset \st$ is asymptotically stable for some continuous local flow on the manifold $M$.
For reasons which will be made clear, we would like to consider the Euler characteristic $\chi(\att)$ defined according to Def.~\ref{def:euler-cech}.
However, as examples such as 
\begin{equation}\label{eq:k-ex-1}
K = \{0\}\cup \{1/n\colon n\in \N\}\subset \R
\end{equation}
or $K\subset \R^2$ the ``Hawaiian earring''( or ``shrinking wedge of circles'') \cite[Ex.~1.25]{hatcher2001algebraic} show, arbitrary compact subsets of manifolds need not have finite-dimensional real \v{C}ech-Alexander-Spanier cohomology (or singular (co)homology).\footnote{The set $K$ of \eqref{eq:k-ex-1} has infinite-dimensional real \v{C}ech-Alexander-Spanier cohomology since $\Hc^0(K;\R)$ is isomorphic to the vector space of locally constant $\R$-valued functions on $K$ \cite[Thm~3.2(i)]{gobbino2001topological}, which is infinite-dimensional in the present case. The set $K$ of \eqref{eq:k-ex-1} also has infinite-dimensional singular cohomology since $H^0(K;\R)$ is isomorphic to $\bigoplus_{i\in I}\R$, where $I$ is the set of path components of $K$ \cite[Thm~3.2(ii)]{gobbino2001topological}, which is infinite in the present case. For $K$ the Hawaiian earring, \cite[Prop.~2.4]{eda2000singular} implies that $\Hc^1(K;\R)$ is infinite-dimensional; \cite[Thm~3.1]{eda2000singular} and the universal coefficient theorem for cohomology \cite[pp.~195--197]{hatcher2001algebraic} imply that $\Hom^1(K;\R)$ is also infinite-dimensional.}
Hence for general compact $K$, Def.~\ref{def:euler-cech} cannot be used to define $\chi(K)$.
However, if $K = \att\subset \st$ is asymptotically stable for a continuous and uniquely integrable vector field $F$, the situation is better: the real \v{C}ech-Alexander-Spanier cohomology of $\att$ is always finite-dimensional.\footnote{A uniquely integrable vector field is one whose maximal integral curves are unique; see App.~\ref{app:uniq-int} for more details.}

To see this, denote by $B(\att)$ the basin of attraction of $\att$ (App.~\ref{app:lyap}), let $\Phi\colon \dom(\Phi)\subset \R \times \st \to \st$ be the unique maximal continuous local flow generated by $F$ (Lem.~\ref{lem:cont-loc-int-vf-gen-local-flow} in App.~\ref{app:uniq-int}), and recall that $\R\times B(\att)\subset \dom(\Phi)$ so that the restriction $\Phi|_{\R\times B(\att)}$ is a flow rather than merely a local flow.
We may then appeal to a known result (for flows) to deduce that the real \v{C}ech-Alexander-Spanier cohomology of $\att$ is always isomorphic to that of its basin $B(\att)$ \cite[p.~28]{shub1974dynamical}, \cite[Rem.~2.3(b)]{hastings1979higher}, \cite[Thm~6.3]{gobbino2001topological}.\footnote{However, as counterexamples involving the \emph{Warsaw circle} show, $\att$ might not be homotopy equivalent to its basin $B(\att)$ if $\att$ is sufficiently pathological \cite{hastings1978shape,hastings1979higher,robbin1988dynamical,gunther1993every}.}

Next, let $V\colon B(\att)\to [0,\infty)$ be a proper $C^\infty$ Lyapunov function for $\att$ (Lem.~\ref{lem:converse-lyap} in App.~\ref{app:lyap}) and fix $c > 0$.
Then the sublevel set  $\st_c\coloneqq V^{-1}([0,c])$ is a compact $C^\infty$ manifold with boundary $V^{-1}(c)$, and $B(\att)$ deformation retracts onto $\st_c$ by following trajectories of $\Phi$ \cite[Thm~3.2]{wilson1967structure}.
Thus, $B(\att)$ is homotopy equivalent to $\st_c$, so the real \v{C}ech-Alexander-Spanier cohomologies of $B(\att)$ and $\st_c$ are isomorphic \cite[p.~240]{spanier1966algebraic}.
Thus, for each $i$ we have isomorphisms
 \begin{equation*}
\Hc^i(\att;\R) \cong \Hc^i(B(\att);\R) \cong \Hc^i(M_c;\R) \cong \Hom^i(M_c;\R),
\end{equation*} 
where the last term is real singular cohomology of $\st_c$.
Since the smooth manifold with boundary $\st_c$ is compact, each $\Hom^i(M_c;\R)$ is finite-dimensional and vanishes when $i > \dim M_c$.
Thus, we have obtained the following result.

\begin{restatable}[]{Prop}{CechCohomAttractor}\label{prop:cech-cohom-attractor}
Fix any proper $C^\infty$ Lyapunov function $V\colon B(\att)\to [0,\infty)$ for $\att$ (see Lem.~\ref{lem:converse-lyap} in App.~\ref{app:lyap}), $c > 0$, and define $\st_c\coloneqq \{x\in B(\att)\colon V(x)\leq c\}$.
The real \v{C}ech-Alexander-Spanier cohomology $\Hc^*(\att;\R)$ of $\att$ is finite-dimensional and isomorphic to the \v{C}ech-Alexander-Spanier cohomologies of both $\Hc^*(\st_c;\R)$ and $\Hc^*(B(\att);\R)$, where $B(\att)$ is the basin of attraction of $\att$.
Thus, the Euler characteristic $\chi(\att)$ is well-defined according to Def.~\ref{def:euler-cech}.
\end{restatable}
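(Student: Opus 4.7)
The plan is to factor the proof through the chain of isomorphisms
\[
\check H^i(A;\R) \;\cong\; \check H^i(B(A);\R) \;\cong\; \check H^i(M_c;\R) \;\cong\; H^i(M_c;\R),
\]
and then use compactness of $M_c$ to conclude finite-dimensionality and eventual vanishing, which together with Def.~\ref{def:euler-cech} give well-definedness of $\chi(A)$. Everything needed for this strategy has already been assembled in the discussion preceding the proposition, so the task of the proof is really to bundle those ingredients together and verify that the hypotheses of each cited result are satisfied in our setting.

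First I would invoke Lem.~\ref{lem:converse-lyap} from App.~\ref{app:lyap} (Wilson's converse Lyapunov theorem) to obtain the proper smooth Lyapunov function $V\colon B(A)\to [0,\infty)$, fix $c>0$, and set $M_c = V^{-1}([0,c])$; properness plus the regular value consideration (inherent in Wilson's construction, which produces a $V$ whose sublevel sets are positively invariant and whose trajectories cross level sets transversally away from $A$) give that $M_c$ is a compact smooth manifold with boundary. Next I would apply \cite[Thm~3.2]{wilson1967structure} to get a deformation retraction of $B(A)$ onto $M_c$ obtained by flowing forward along $\Phi$; since deformation retracts induce homotopy equivalences and homotopy equivalent spaces have isomorphic \v{C}ech-Alexander-Spanier cohomology \cite[p.~240]{spanier1966algebraic}, this yields $\check H^i(B(A);\R) \cong \check H^i(M_c;\R)$. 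Because $M_c$ is a paracompact Hausdorff locally contractible space, Remark~\ref{rem:euler-equiv-defns} (or directly \cite[pp.~334,340]{spanier1966algebraic}) identifies $\check H^i(M_c;\R)$ with the singular cohomology $H^i(M_c;\R)$, which is finite-dimensional and vanishes for $i > \dim M_c$ by compactness of the smooth manifold with boundary $M_c$.

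The step that requires the most care, and which I expect to be the main obstacle, is the first isomorphism $\check H^i(A;\R)\cong \check H^i(B(A);\R)$: the set $A$ itself may be topologically wild (the Warsaw-circle phenomenon mentioned in the footnote shows $A$ need not even be homotopy equivalent to its basin), so this isomorphism cannot be obtained from a homotopy equivalence and instead requires a genuine shape-theoretic or continuity argument. Here I would cite the results of Shub \cite[p.~28]{shub1974dynamical}, Hastings \cite[Rem.~2.3(b)]{hastings1979higher}, and Gobbino \cite[Thm~6.3]{gobbino2001topological}, after first verifying that their hypotheses apply: the unique maximal local flow $\Phi$ generated by $F$ exists by Lem.~\ref{lem:cont-loc-int-vf-gen-local-flow}, and its restriction $\Phi|_{\R\times B(A)}$ is a global flow on $B(A)$ since forward and backward trajectories starting in $B(A)$ stay in $B(A)$ for all time, placing us exactly in the setting of the cited continuity-of-\v{C}ech-cohomology-on-attractors theorems.

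Combining the three isomorphisms gives that $\check H^i(A;\R)$ is finite-dimensional for every $i$ and vanishes for $i > \dim M_c$, so the sum in \eqref{eq:euler-cech-def} has only finitely many nonzero terms and $\chi(A)$ is well-defined by Def.~\ref{def:euler-cech}.
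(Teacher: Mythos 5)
Your proposal is correct and follows essentially the same argument as the paper: the chain of isomorphisms $\Hc^*(\att;\R)\cong\Hc^*(B(\att);\R)\cong\Hc^*(\st_c;\R)\cong H^*(\st_c;\R)$, with the first step via the cited attractor--basin results of Shub/Hastings/Gobbino, the second via Wilson's deformation retraction and homotopy invariance of \v{C}ech-Alexander-Spanier cohomology, and the last via local contractibility of $\st_c$, concluding with compactness of $\st_c$. Your added verifications (that $c>0$ is a regular value so $\st_c$ is a compact manifold with boundary, and that $\Phi$ restricts to a genuine flow on $B(\att)$) are exactly the hypotheses the paper's exposition relies on implicitly.
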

\begin{Rem}
We remind the reader of the following fact discussed in \S \ref{sec:euler-motivation}.
When $\att$ has a finite CW decomposition (or, more generally, is homotopy equivalent to a finite CW complex), then $\chi(\att)$ is just the number of even-dimensional cells minus the number of odd-dimensional cells, as in \eqref{eq:euler-CW}.    
\end{Rem}

For the hypotheses of our main results, it is only relevant whether the Euler characteristic of some set is zero or nonzero. 
The following well-known result completely characterizes those compact smooth manifolds with boundary having zero Euler characteristic; one of the two implications will be invoked in the proof of Theorem~\ref{th:generalized-brockett}.
For the statement, a vector field $F$ on a smooth manifold with boundary $M$ \concept{points strictly inward} at $\partial M$ if, for every $x\in \partial M$, $F(x)$ points strictly inward in the usual sense \cite[p.~118]{lee2013smooth}; thus, every vector field on a smooth manifold $M$ \emph{without} boundary vacuously points strictly inward at $\partial M = \varnothing$. 
\begin{Lem}[Poincar\'{e} and Hopf]\label{lem:poincare-hopf}
A compact smooth manifold $M$ with (or without) boundary $\partial M$ has vanishing Euler characteristic $\chi(M)=0$ if and only if there exists a continuous, nowhere-vanishing vector field $F$ on $M$ which points strictly inward at $\partial M$.
\end{Lem}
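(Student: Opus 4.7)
The lemma is the Poincar\'{e}--Hopf theorem together with its classical converse, and I would handle the two directions separately.

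\emph{Sufficient direction} (existence of $F$ implies $\chi(M)=0$). Consider $G := -F$, which is continuous, nowhere vanishing, and points strictly \emph{outward} at $\partial M$. I would then apply the Poincar\'{e}--Hopf index theorem, valid for continuous vector fields by \cite{pugh1968generalized} as recalled in the footnote on p.~2 of the excerpt, to conclude that $\chi(M)$ equals the sum of the indices of the zeros of $G$ over $M$. Since $G$ has no zeros, this sum is empty, so $\chi(M)=0$. Note this works whether or not $\partial M$ is empty, the latter being the easier case in which the boundary hypothesis on $F$ is vacuous.

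\emph{Necessary direction} ($\chi(M)=0$ implies such an $F$ exists). This is the converse Poincar\'{e}--Hopf direction. First suppose $\partial M = \varnothing$. I would take a smooth triangulation of $M$ and build a gradient-like continuous vector field $F_0$ whose only zeros lie at the barycenters of the simplices, with index $(-1)^{\dim\sigma}$ at the barycenter of $\sigma$; a standard local model on the star of each simplex, patched by a partition of unity, suffices. The total index is $\sum_k(-1)^k c_k = \chi(M) = 0$, where $c_k$ is the number of $k$-simplices, so the zeros can be matched in pairs of opposite index. I would then invoke Hopf's cancellation lemma: two zeros of opposite index that lie in a common contractible chart can be annihilated by a compactly supported local modification of $F_0$. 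Iterating yields a nowhere-vanishing continuous vector field.

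When $\partial M \ne \varnothing$, I would first fix a smooth collar neighborhood $C \cong \partial M \times [0,\varepsilon)$ and declare $F := -\partial/\partial t$ on a slightly shrunken collar $C'$; this is continuous, nonzero, and strictly inward-pointing at $\partial M$. On the compact manifold with boundary $M\setminus C'$, I would triangulate, build a gradient-like vector field as in the closed case, and match it to $-\partial/\partial t$ on the overlap using a cut-off. The total index count equals $\chi(M)=0$ because $C'$ is a collar (so $M\setminus C'$ is a deformation retract of $M$ and has the same Euler characteristic), which again pairs the zeros by opposite sign. Applying Hopf's cancellation lemma in the interior produces the desired $F$, with the boundary behavior preserved because every cancellation has support disjoint from $C'$.

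The main obstacle is Hopf's cancellation lemma itself, a classical but nontrivial piece of differential topology proved by an explicit homotopy of vector fields on a model disk (see, e.g., Milnor, \emph{Topology from the Differentiable Viewpoint}, or Hirsch, \emph{Differential Topology}, Ch.~4); verifying that the construction goes through for merely continuous (rather than smooth) fields and verifying, in the boundary case, that all cancellations are confined to the interior, are routine once the smooth cancellation lemma is granted.
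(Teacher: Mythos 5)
Your forward direction coincides with the paper's: both deduce $\chi(M)=0$ by applying the Poincar\'{e}--Hopf theorem for continuous vector fields (the paper cites \cite{pugh1968generalized}) to a nowhere-vanishing field; negating $F$ to fit the outward-pointing convention is harmless, since the empty index sum vanishes under either convention. For the converse---which, note, the paper explicitly does \emph{not} use (only the forward implication enters the proof of Theorem~\ref{th:generalized-brockett})---you take a genuinely different route. The paper's sketch starts from a \emph{generic} smooth inward-pointing vector field, invokes \cite{michor1994n} to place all of its finitely many zeros in a \emph{single} coordinate chart, and then performs one global cancellation there via the Hopf degree theorem \cite[p.~146]{guillemin1974differential}. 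You instead build the field from a triangulation, with index $(-1)^{\dim\sigma}$ at each barycenter, and cancel zeros pairwise. Both are classical and both succeed; the paper's version leans on Michor's single-chart lemma but needs only one application of the Hopf extension argument, whereas yours is more self-contained but requires iterated pairwise cancellations together with a connecting-arc argument to bring each pair into a common ball. Three smaller remarks. First, since the goal is merely to \emph{exhibit} a nowhere-vanishing continuous field, you may work with smooth fields throughout the converse, so your closing concern about extending the cancellation lemma to continuous fields is moot; continuity genuinely matters only in the forward direction, where $F$ is given merely continuous. Second, with the collar $\partial M\times[0,\varepsilon)$ the inward-pointing field is $+\partial/\partial t$ rather than $-\partial/\partial t$, and blending two nonvanishing fields by a cut-off can create new zeros where they oppose; this is harmless here because any such zeros lie in the interior and, after a generic perturbation making them isolated, are absorbed into the cancellation, the total index still being forced to equal $\pm\chi(M)=0$ by Poincar\'{e}--Hopf. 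Third, both your argument and the paper's implicitly require $M$ connected (or at least $\chi=0$ on each component): for the disjoint union of $\sph^2$ with a closed orientable surface of genus $2$ one has $\chi(M)=0$, yet no nowhere-vanishing field exists, since zeros cannot be cancelled across components. This is a defect of the lemma as stated rather than of your proof in particular, and it does not affect the paper, which uses only the forward implication.
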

That the existence of such a vector field $F$ implies $\chi(M)=0$ follows directly from the Poincar\'{e}-Hopf theorem \cite{pugh1968generalized}.
We will not need the opposite implication, but it can be established by ``canceling'' the isolated zeros of a generic smooth and inward pointing vector field $F$ against one another; this can be accomplished using the Hopf degree theorem \cite[p.~146]{guillemin1974differential}, the Poincar\'{e}-Hopf theorem, and the fact that the isolated zeros of $F$ are contained in a single coordinate chart \cite{michor1994n}.

    \section{Main results: stabilization and safety}\label{sec:main}  
   In this section we state and prove our main results, which apply to control systems of the usual form \eqref{eq:xdot-eq-fxu}, as well as a more general class of control systems which we now describe.
   
   Let $\st$ be a smooth manifold (the state space).
   Loosely following \cite{brockett1977control}, we say that a \concept{control system} is a $4$-tuple $(\ctrl,\st,p,f)$ such that the following diagram commutes (so $f$ is fiber-preserving):
   \begin{equation}\label{eq:control-system}
   \begin{tikzcd}[row sep=2.5em]
   \ctrl \arrow[dr,"p",swap] \arrow[rr,"f"] & & \T \st \arrow[dl,"\pi"] \\
   & \st 
   \end{tikzcd}.
   \end{equation}
   Here $\ctrl$ is a set, $p$ is a surjective map, and $\pi$ is the tangent bundle projection.
   A (state-feedback) \concept{control law} is a section $u\colon \st\to \ctrl$ of $p$, which means that $p\circ u = \id_\st$.\footnote{We remark that control systems are sometimes also called ``open systems'' \cite{lerman2018networks}.}
   Given a control law $u$, the commutativity of \eqref{eq:control-system} implies that $f\circ u$ is a vector field since $\pi \circ (f\circ u) = (\pi\circ f)\circ u = p\circ u = \id_M$.
   We refer to the vector field $f\circ u$ on $\st$ as defining the \concept{closed-loop system}.
   Note that \eqref{eq:control-system} specializes to the form \eqref{eq:xdot-eq-fxu} in  the case that $\ctrl = \st\times \R^m$ and $p(m,u)=m$. 
   The generality afforded by \eqref{eq:control-system} is quite useful for modeling systems in which the set of admissible controls depends on the state.
  
   There are many common assumptions on $\ctrl$ and $p$; it seems most authors assume that $\ctrl$ is a smooth manifold and that $p$ is a smooth (i) vector bundle \cite{brockett1983asymptotic}, or (ii) fiber bundle \cite{van1981symmetries, grizzle1985structure, bloch2015nonholonomic}, or (iii) submersion \cite{lerman2018networks}.
   For our purposes, we can obtain more general results by not imposing further conditions on $\ctrl$ or $p$ (including continuity properties) except for surjectivity of $p$ (so that sections exist), so we will not do so unless explicitly specified.
   
   \begin{Def}\label{def:stabilizable}
   Let $(\ctrl, \st, p, f)$ be a control system and $\att\subset \st$ be a compact subset.
   We say that $\att$ is \concept{stabilizable} if there exists a control law $u\colon M\to \ctrl$ such that (i) the closed-loop vector field $f\circ u$ is continuous and uniquely integrable and (ii) $\att$ is asymptotically stable for $f\circ u$.
   \end{Def}
   If $\ctrl$ is a smooth manifold and both $f$ and $u$ are locally Lipschitz,\footnote{\emph{Local} Lipschitz-ness is a well-defined, metric-independent notion of maps between smooth manifolds which depends only on the smooth structures of the manifolds \cite[Rem.~1]{kvalheim2021existence}.} then the assumptions of continuity and unique integrability in Def.~\ref{def:stabilizable} are automatic (see Rem.~\ref{rem:loc-lip-implies-uniq-int} in App.~\ref{app:uniq-int}).
   \begin{Rem}\label{rem:discontinuous-control-bundle}
   In Def.~\ref{def:stabilizable} it is not assumed that $p$ is continuous or even that $\ctrl$ is a topological space.
   This is because the proofs of Theorem~\ref{th:generalized-brockett} and related results only require continuity of the closed-loop vector field $f\circ u$, a somewhat weaker property. 
   \end{Rem}
   The following is the first  of our two main results; it provides a necessary condition for stabilizability of a compact subset.
   
    \begin{Th}\label{th:generalized-brockett}
    Let $(\ctrl,\st,p,f)$ be a control system and $\att\subset \st$ be a compact subset.
    Assume that $\att$ is stabilizable.
    \begin{itemize}
    \item Then the Euler characteristic $\chi(\att)$ of $\att$ is well-defined according to Def.~\ref{def:euler-cech}.
   \item Assume additionally that $\chi(\att) \neq 0$.
   Then for any neighborhood $\cW\subset \st$ of $A$, there exists a neighborhood $\cV\subset \T \st$ of the zero section $0_{\T \st}\subset \T \st$ such that, for any continuous vector field $X\colon \cW\to \cV \subset \T \st$ on $\cW$ taking values in $\cV$,
   \begin{equation}\label{eq:generalized-brockett-adversary}
   f(p^{-1}(\cW))\cap X(\cW)\neq \varnothing.
       \end{equation}
    \end{itemize}
    \end{Th}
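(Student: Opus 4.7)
The first bullet is immediate from Proposition~\ref{prop:cech-cohom-attractor}: stabilizability gives a continuous, uniquely integrable closed-loop vector field $F \coloneqq f \circ u$ rendering $\att$ asymptotically stable, so the hypotheses of the proposition are met and $\chi(\att)$ is well-defined. The main work is the second bullet, which I plan to prove by contradiction, reducing to Lemma~\ref{lem:poincare-hopf} via a Lyapunov sublevel set.

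Fix a stabilizing control law $u$, set $F = f \circ u$, and invoke Lemma~\ref{lem:converse-lyap} to obtain a proper smooth Lyapunov function $V \colon B(\att) \to [0,\infty)$. By Sard's theorem I can pick arbitrarily small regular values $c > 0$; for such $c$ the sublevel set $\st_c = V^{-1}([0,c])$ is a compact smooth manifold with boundary $\partial \st_c = V^{-1}(c)$, and Proposition~\ref{prop:cech-cohom-attractor} gives $\chi(\st_c) = \chi(\att) \neq 0$. Since the $\st_c$ form a neighborhood basis of $\att$, given any neighborhood $\cW$ of $\att$ I can choose $c$ so that $\st_c \subset \cW$. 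Fix a Riemannian metric on $\st$. Because $V$ is a strict Lyapunov function and $\partial \st_c$ is compact, there exists $\delta > 0$ with $dV_x(F(x)) \leq -\delta$ for all $x \in \partial \st_c$, and $\norm{dV_x}$ is bounded on $\partial \st_c$; I then let $\cV \subset \T\st$ be the open neighborhood of the zero section consisting of tangent vectors $v \in \T_x \st$ satisfying $\norm{v} < \delta/(1 + \sup_{\partial \st_c}\norm{dV})$ (one may shrink $\cV$ further to control behavior on a neighborhood of $\partial \st_c$).

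Now suppose for contradiction that there is a continuous vector field $X \colon \cW \to \cV$ with $f(p^{-1}(\cW)) \cap X(\cW) = \varnothing$. For each $x \in \cW$, $u(x) \in p^{-1}(\cW)$ and so $F(x) = f(u(x)) \neq X(x)$; thus $F - X$ is a continuous, nowhere-vanishing vector field on $\cW$, and in particular on $\st_c$. On $\partial \st_c$, the choice of $\cV$ guarantees $|dV_x(X(x))| < \delta$, so
\begin{equation*}
dV_x\bigl(F(x) - X(x)\bigr) \leq -\delta + |dV_x(X(x))| < 0,
\end{equation*}
which means $F - X$ points strictly inward at $\partial \st_c$. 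Lemma~\ref{lem:poincare-hopf} then forces $\chi(\st_c) = 0$, contradicting $\chi(\st_c) = \chi(\att) \neq 0$.

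The one place that needs genuine care, and where I expect the main subtlety to live, is the compatibility of the three scales $c$, $\delta$, and $\cV$: I must first fix the stabilizing $F$ and $V$, then pick $c$ small enough that $\st_c \subset \cW$, then extract $\delta$ from $F$ on $\partial \st_c$, and only then define $\cV$ in a way that is canonical (independent of $x \in \partial \st_c$ and of the particular adversarial $X$) so that the bound $|dV_x(X(x))| < \delta$ actually holds for \emph{every} continuous section $X$ of $\cV$. Using a fixed background Riemannian metric to bound $\norm{dV}$ on the compact set $\partial \st_c$ and defining $\cV$ via a uniform norm bound handles this uniformly; everything else is bookkeeping.
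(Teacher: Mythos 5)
Your proposal is correct and follows essentially the same route as the paper's proof: converse Lyapunov function, a sublevel set $\st_c\subset\cW$ with $\chi(\st_c)=\chi(\att)\neq 0$, a uniform bound $L_FV<-\delta$ on the compact boundary $\partial\st_c$ to build $\cV$, and the Poincar\'e--Hopf lemma applied to $F-X$ pointing strictly inward. The only differences are cosmetic (you argue by contradiction where the paper argues directly, and you make the choice of $\cV$ explicit via a Riemannian norm bound, which is exactly the content of the paper's ``continuity and compactness'' step); your Sard's-theorem remark is also unnecessary since $dV\neq 0$ off $\att$ makes every $c>0$ a regular value.
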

    We like to think of the vector fields $X$ in Theorem~\ref{th:generalized-brockett} as \concept{adversaries} which need to be ``defeated'' via intersection of their images with the image of $f$ somewhere.
   See Fig.~\ref{fig:adversaries}.
   Because adversaries violating \eqref{eq:generalized-brockett-adversary} are highly nonunique, it is often quite easy to find them in examples (see Ex.~\ref{ex:heisenberg-1}, \ref{ex:kinematic-unicycle-1}, \ref{ex:satellite}, \ref{ex:vert-rolling-disk} and Prop.~\ref{prop:aff-gen-arg}, \ref{prop:general-second-order-cs-on-bundle-theorem} \ref{prop:nonhol-lagrangian}), thus ruling out stabilizability via Theorem~\ref{th:generalized-brockett}.
    
   \begin{figure}
   	\centering
   	\def\svgwidth{1.0\columnwidth}
   	\import{figs/}{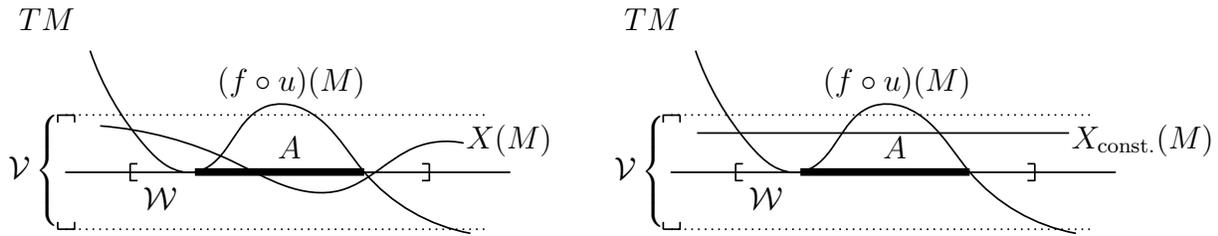}
   	\caption{Depiction of the objects from Theorem~\ref{th:generalized-brockett} with a general continuous adversary $X$ and with a constant (with respect to some local trivialization of $\T M$) adversary (see Rem.~\ref{rem:generalize-brockett}).}\label{fig:adversaries}
   \end{figure}    

   \begin{figure}
   	\centering
   	\def\svgwidth{0.6\linewidth}
   	\import{figs/}{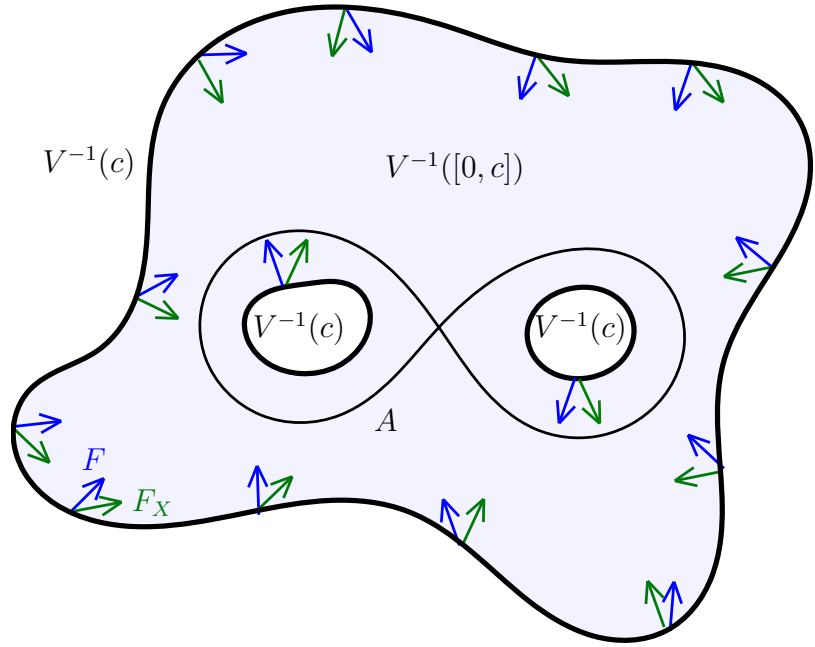}
   	\caption{An illustration of the proof of Theorem~\ref{th:generalized-brockett}. Here $\att$ is homeomorphic to the wedge product of two circles, a ``figure eight'', which has Euler characteristic $-1\neq 0$.}\label{fig:thm1-proof}
   \end{figure}    
   
   \begin{Rem}\label{rem:templates}
   In biology, a ``template'' \cite{full1999templates,seipel2017conceptual} is often interpreted to mean an asymptotically stable invariant manifold $\att$ (carrying some prescribed restriction dynamics) of some closed-loop control system; hence
   Theorem~\ref{th:generalized-brockett} could enable one to rule out candidate templates from hypothesized neuromechanical control architectures \cite{revzen2009towards}.
   \end{Rem}
   
   \begin{Rem}[cf. Rem.~\ref{rem:cbf}]\label{rem:clf}
   For a broad subclass of control systems \eqref{eq:control-system}, existence of a ``control Lyapunov function'' \cite{sontag1999clf} for $\att$ implies existence of a continuous stabilizing feedback \cite{sontag1989universal}, so Theorem~\ref{th:generalized-brockett} can be used to rule out the existence of a control Lyapunov function.
   Ruling out the need to search for one might save valuable time and computational resources \cite{papachristodoulou2005tutorial}.
   \end{Rem}

    \begin{Rem}\label{rem:generalize-brockett}
    Theorem~\ref{th:generalized-brockett} is a generalization of Brockett's necessary condition \cite[Thm~1.(iii)]{brockett1983asymptotic}.
    To obtain Brockett's necessary condition from Theorem~\ref{th:generalized-brockett}, first specialize Theorem~\ref{th:generalized-brockett} by taking $\st = \R^n$, $\ctrl = \R^n \times \R^m$, and $p(x,u) = x$ (see \eqref{eq:control-system}).
    Then take $\att$ to be a singleton.
    Then weaken this specialization by restricting attention only to those adversaries $X$ (see Theorem~\ref{th:generalized-brockett}) which are \emph{constant} (using the canonical identification $\T\R^n \approx \R^n \times \R^n$ to view $X$ as $\R^n$-valued and thus define ``constant'').
    This yields \cite[Thm~1.(iii)]{brockett1983asymptotic}.
    If, in this special case, we did not restrict attention to constant adversaries $X$, then we show in Ex.~\ref{ex:strictly-stronger} that Theorem~\ref{th:generalized-brockett} is in fact strictly stronger than \cite[Thm~1.(iii)]{brockett1983asymptotic}.
    \end{Rem}

    \begin{Rem}\label{rem:brockett-proof}
    The following proof is inspired by one presented by Brockett \cite[Thm~1.(iii)]{brockett1983asymptotic} for stabilizing a point.
    It differs from Brockett's proof essentially only in the following two ways.
    First, $A$ is a singleton in Brockett's case, so it is immediate that $\chi(A) = 1\neq 0$ (see Eq.~\eqref{eq:euler-cech-def} and Rem.~\ref{rem:euler-equiv-defns}); however, in our case we need to refer to Prop.~\ref{prop:cech-cohom-attractor} to ensure that $\chi(A)$ is well-defined.
    Second, Brockett restricts attention to \emph{constant} adversaries $X\equiv c\in \R^n$ on $\R^n$, but we do not.
    (On a general smooth manifold $\st$, ``constant'' vector fields are not even well-defined).
    \end{Rem}

    \begin{proof}[Proof of Theorem~\ref{th:generalized-brockett}]
    Let $B(\att)$ be the basin of attraction of $\att$.
    Since $\att$ is asymptotically stable for the continuous and uniquely integrable vector field $F\coloneqq f\circ u$, it follows from Prop.~\ref{prop:cech-cohom-attractor} that $\chi(\att)$ is well-defined via Def~\ref{def:euler-cech}. 
    
    Next, assume that $\chi(\att)\neq 0$.
    Let $V\colon B(\att)\to [0,\infty)$ be a proper $C^\infty$ Lyapunov function for $\att$ (Lem.~\ref{lem:converse-lyap} in App.~\ref{app:lyap}) and fix $\epsilon>0$.
    Since the sublevel sets $\st_c\coloneqq V^{-1}([0,c])$ are compact, there exists $c > 0$ sufficiently small that $\st_c\subset \cW$.\footnote{Proof: the  family $(\st_1\setminus \st_c)_{c>0}$ of sets are open relative to $\st_1$ and form a cover of the compact set $\st_1\setminus \cW$.
    Thus, there is a finite subcover.
    Since $c_1<c_2$ implies $\st_{c_1}\subset \st_{c_2}$, it follows that there exists $0<c<1$ such that $\st_1\setminus \st_c \supset \st_1\setminus \cW$, which is equivalent to $\st_c \subset \st_1\cap \cW\subset \cW$.}
    By Prop.~\ref{prop:cech-cohom-attractor},
    \begin{equation}\label{eq:euler-equal-nonzero}
    \chi(M_c) = \chi(\att) \neq 0.
    \end{equation}
    
    Since the Lie derivative $L_F V(x) < 0$ when $x\not \in \att$, $L_F V(x) < 0$ for all $x\in \partial \st_c = V^{-1}(c)$.
    Since $\partial \st_c$ is compact, there exists $\epsilon > 0$ such that $L_F V(x) < -\epsilon$ for all $x\in \partial \st_c$.
    Continuity and compactness of $\partial \st_c$ imply the existence of a neighborhood $\cV$ of $0_{\T \st}$ such that, for any vector field $X$ on $\cW$ taking values in $\cV$, the translated vector field
    $$F_X\coloneqq F - X$$
    satisfies $L_{F_X}V < 0$.
    Thus, $F_X$ points strictly inward at $\partial M_c$ (Fig.~\ref{fig:thm1-proof}).
    By \eqref{eq:euler-equal-nonzero} and the Poincar\'{e}-Hopf theorem (Lem.~\ref{lem:poincare-hopf}), it follows that $F_X|_{M_c}$ has a zero for all such $X$.
    But a zero of $F_X|_{M_c}$ is a point  $x\in M_c\subset \cW$ such that $$0 = F_X(x) = F(x) - X(x) =  f\circ u(x) - X(x).$$
    Thus, for any $X$ taking values in $\cV$, there exists $x\in M_c\subset \cW$ such that $f\circ u(x) = X(x)$.
    This implies that $f(p^{-1}(\cW))\cap X(\cW)\neq \varnothing$ and completes the proof.
    \end{proof}

    Using Theorem~\ref{th:generalized-brockett}, we now proceed towards deriving a necessary condition for the existence of a ``safe'' control law rendering some ``bad'' set repelling (Theorem~\ref{th:safety}).

    \begin{Def}\label{def:strictly-inflowing}
    We say that a subset $S\subset \st$ is \concept{strictly positively invariant} for a continuous and uniquely integrable vector field $F$ on $\st$ if, for all $x_0\in \cl(S)$, the unique trajectory $t\mapsto x(t)$ of $F$ with initial condition $x(0)=x_0$ satisfies $x(t)\in \interior(S)$ for all $t>0$.
    \end{Def}
    Note that a sufficient condition for $S$ to be strictly positively invariant is that $S$ be a compact codimension-$0$ smooth submanifold with boundary such that $F$ points strictly inward at $\partial S$.

   \begin{Def}\label{def:safety}
   Let $(\ctrl, \st, p, f)$ be a control system and $S\subset \st$.
   We say that $S$ is \concept{savable} (or can be \concept{rendered safe}) if there exists a control law $u\colon M\to \ctrl$ such that (i) the closed-loop vector field $f\circ u$ is continuous and uniquely integrable and (ii) $S$ is strictly positively invariant for $f\circ u$.
   \end{Def}

    \begin{Lem}\label{lem:strict-inflowing-att-euler}
    Assume that $S\subset \st$ is precompact and strictly positively invariant for the continuous and uniquely integrable vector field $F$ on $\st$.
    Then $R\coloneqq \interior(S)$ contains a unique maximal (with respect to set inclusion) compact asymptotically stable subset $\att$.
    Moreover, the real \v{C}ech-Alexander-Spanier cohomology $\Hc^*(\att;\R)$ of $\att$ is finite-dimensional and isomorphic to both $\Hc^*(S;\R)$ and $\Hc^*(R;\R)$.
    Thus, the Euler characteristics $\chi(\att)$, $\chi(S)$, and $\chi(R)$ are well-defined according to Def.~\ref{def:euler-cech} and $\chi(\att)=\chi(S)=\chi(R)$. 
    \end{Lem}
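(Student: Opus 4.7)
The plan is to define $\att$ as the maximal invariant set inside $\cl(S)$, namely
\[
\att \coloneqq \bigcap_{t \geq 0} \Phi^t(\cl(S)),
\]
where $\Phi$ denotes the unique maximal continuous local flow of $F$ (Lem.~\ref{lem:cont-loc-int-vf-gen-local-flow}). Since $S$ is precompact, $\cl(S)$ is compact; strict positive invariance of $S$ gives $\Phi^t(\cl(S)) \subset \interior(S) = R$ for every $t > 0$, so $\cl(S)$ is forward invariant and $\Phi^t$ is defined on all of $\cl(S)$ for $t \geq 0$. Hence $\att$ is a nested intersection of nonempty compacta and is itself compact, forward invariant, and contained in $R$.

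To verify that $\att$ is asymptotically stable I would establish attraction and Lyapunov stability separately. For attraction, a finite-intersection argument on the decreasing family $(\Phi^t(\cl(S)) \setminus U)_{t\geq 0}$ (whose intersection is empty for any open $U \supset \att$) produces some $T \geq 0$ with $\Phi^T(\cl(S)) \subset U$, which forces all trajectories from $\cl(S) \supset R$ into $U$ beyond time $T$, placing $R$ in the basin of $\att$. For Lyapunov stability given such $U$, I would use $\cV \coloneqq \Phi^T(R)$, which is open (as $\Phi^T$ is a homeomorphism on its domain) and contains $\att = \Phi^T(\att)$; forward invariance of $\cl(S)$ then yields $\Phi^t(\cV) = \Phi^{t+T}(R) \subset \Phi^T(\cl(S)) \subset U$ for all $t \geq 0$. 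For maximality, I would argue that any compact asymptotically stable $\att' \subset R$ is two-sided invariant under $\Phi$ (a classical consequence of compactness and Lyapunov stability), so $\att' = \Phi^t(\att') \subset \Phi^t(\cl(S))$ for every $t \geq 0$, whence $\att' \subset \att$; uniqueness is immediate.

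The cohomological statement I would obtain by constructing a single flow-induced strong deformation retraction onto a common Lyapunov sublevel set. Let $V \colon B(\att) \to [0,\infty)$ be a proper $C^\infty$ Lyapunov function (Lem.~\ref{lem:converse-lyap}) and choose $c > 0$ small enough that $\st_c \coloneqq V^{-1}([0,c])$ is contained in $R$. Because $V$ strictly decreases along trajectories off $\att$ and $\cl(S) \subset B(\att)$, the first-hitting time $\tau(x) \coloneqq \inf\{t \geq 0 : V(\Phi^t(x)) \leq c\}$ is finite and continuous on $\cl(S)$, and
\[
H \colon \cl(S) \times [0,1] \to \cl(S), \quad H(x,s) \coloneqq \Phi^{s\tau(x)}(x)
\]
is a strong deformation retraction of $\cl(S)$ onto $\st_c$. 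Since $\Phi^\sigma(\cl(S)) \subset R$ for every $\sigma > 0$, the same prescription restricts to strong deformation retractions of $S$ onto $\st_c$ and of $R$ onto $\st_c$. Homotopy invariance of \v{C}ech-Alexander-Spanier cohomology then yields $\Hc^*(S;\R) \cong \Hc^*(R;\R) \cong \Hc^*(\st_c;\R)$, while $\Hc^*(\st_c;\R) \cong \Hc^*(\att;\R)$ and finite-dimensionality come from Prop.~\ref{prop:cech-cohom-attractor}. Equality of Euler characteristics then follows directly from Def.~\ref{def:euler-cech}.

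The main technical obstacles I anticipate are two. First, continuity of the first-hitting time $\tau$ and the careful bookkeeping that $H$ actually maps $S \times [0,1]$ into $S$ and $R \times [0,1]$ into $R$: the case $s\tau(x) > 0$ is handled by invoking strict positive invariance, while $s\tau(x) = 0$ is handled by having arranged $\st_c \subset R$. Second, the invariance assertion $\Phi^t(\att') = \att'$ used in the maximality step, which is slightly stronger than positive invariance and needs a short standalone argument from Lyapunov stability.
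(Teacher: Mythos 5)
Your proposal is correct, but it reaches the conclusion by a noticeably different route than the paper, so a comparison is worthwhile. For the existence and maximality of $\att$ you give a direct argument (nested intersection $\bigcap_{t\geq 0}\Phi^t(\cl(S))$, a finite-intersection-property argument for attraction, $\Phi^T(R)$ for Lyapunov stability, and two-sided invariance of any competitor $\att'$ for maximality); the paper simply cites Conley for this step, and your intersection agrees with the paper's $\bigcap_{t>0}\Phi^t(R)$ by the nesting $\Phi^t(\cl(S))\subset\Phi^{t/2}(R)$. The real divergence is in the cohomological step. The paper multiplies $F$ by a smooth bump function $\varphi$ with $\varphi^{-1}(0)=\st\setminus R$ so that $\att$ becomes asymptotically stable for $\tilde F=\varphi F$ with basin exactly $R$, whence $\Hc^*(\att;\R)\cong\Hc^*(R;\R)$ falls out of Prop.~\ref{prop:cech-cohom-attractor} with no retraction to build; it then gets $\Hc^*(S;\R)\cong\Hc^*(R;\R)$ from the two-line observation that $g\coloneqq\Phi^1|_S\colon S\to R$ and the inclusion $h\colon R\hookrightarrow S$ are homotopy inverses via the flow itself on $[0,1]$. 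You instead keep the original $F$ and build a single hitting-time deformation retraction of $\cl(S)$, $S$, and $R$ onto a common sublevel set $\st_c\subset R$, then invoke $\Hc^*(\st_c;\R)\cong\Hc^*(\att;\R)$ from Prop.~\ref{prop:cech-cohom-attractor}. This works---your bookkeeping that $H$ preserves $S$ and $R$ via strict positive invariance is right, and continuity of $\tau$ follows from the standard semicontinuity argument using $L_FV<0$ off $\att$---but it carries the burden of proving that continuity, which in effect re-derives a piece of Wilson's retraction theorem that the paper outsources. What your route buys is self-containedness and the avoidance of the vector-field rescaling device; what the paper's route buys is brevity, since both isomorphisms come nearly for free from Prop.~\ref{prop:cech-cohom-attractor} and the time-one-map trick.
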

    \begin{proof}
    Let $\Phi$ be the unique maximal continuous local flow of $F$ (see Lem.~\ref{lem:cont-loc-int-vf-gen-local-flow} in App.~\ref{app:uniq-int}).
    Since $\cl(S)$ is compact and strictly positively invariant, $[0,\infty)\times \cl(S)\subset \dom(\Phi)$, and there is a unique maximal compact asymptotically stable subset $\att\subset R$ given by
    $$\att\coloneqq \bigcap_{t>0}\Phi^t(R).$$
    See for example \cite[Sec.~II.5.C]{conley1978isolated} (cf. \eqref{eq:att-alt-def}).
    
    Let $\varphi\colon \st\to [0,\infty)$ be a $C^\infty$ function satisfying $\varphi^{-1}(0) = \st\setminus R$ \cite[Thm~2.29]{lee2013smooth} and define the uniquely integrable continuous vector field $\tilde{F}\coloneqq \varphi F$.
    Then $\att$ is asymptotically stable for $\tilde{F}$ with basin of attraction $R$.
    By Prop.~\ref{prop:cech-cohom-attractor} and Def.~\ref{def:euler-cech}, $\Hc^*(\att;\R)$ is finite-dimensional and isomorphic to $\Hc^*(R;\R)$; hence also the Euler characteristics of $\att$ and $R$ are well-defined via Def.~\ref{def:euler-cech} and are equal.
    
    To complete the proof, it suffices to show that $\Hc^*(S;\R)$ is isomorphic to $\Hc^*(R;\R)$.
    And to do this, by the homotopy invariance of \v{C}ech-Alexander-Spanier cohomology \cite[p.~240]{spanier1966algebraic}, it suffices to prove that $S$ is homotopy equivalent to its interior $R=\interior(S)$.
    Letting $\Phi$ be as above, define $g\colon S\to R$ via $g\coloneqq \Phi^1|_S$ and define $h\colon R\hookrightarrow S$ to be the inclusion map.
    Since $h\circ g = \Phi^1|_S$ and $g\circ h = \Phi^1|_R$, the maps $G\colon [0,1]\times S\to S$ and $H\colon [0,1]\times R\to S$ defined by $G\coloneqq \Phi|_{[0,1]\times S}$ and $H\coloneqq \Phi|_{[0,1]\times R}$ are continuous homotopies from $h\circ g$ to $\id_S$ and from $g\circ h$ to $\id_R$, respectively.
    Thus, $h$ is a homotopy equivalence with homotopy inverse $g$; this completes the proof.
    \end{proof}
    
   \begin{figure}
   	\centering
   	\def\svgwidth{1.0\linewidth}
   	\import{figs/}{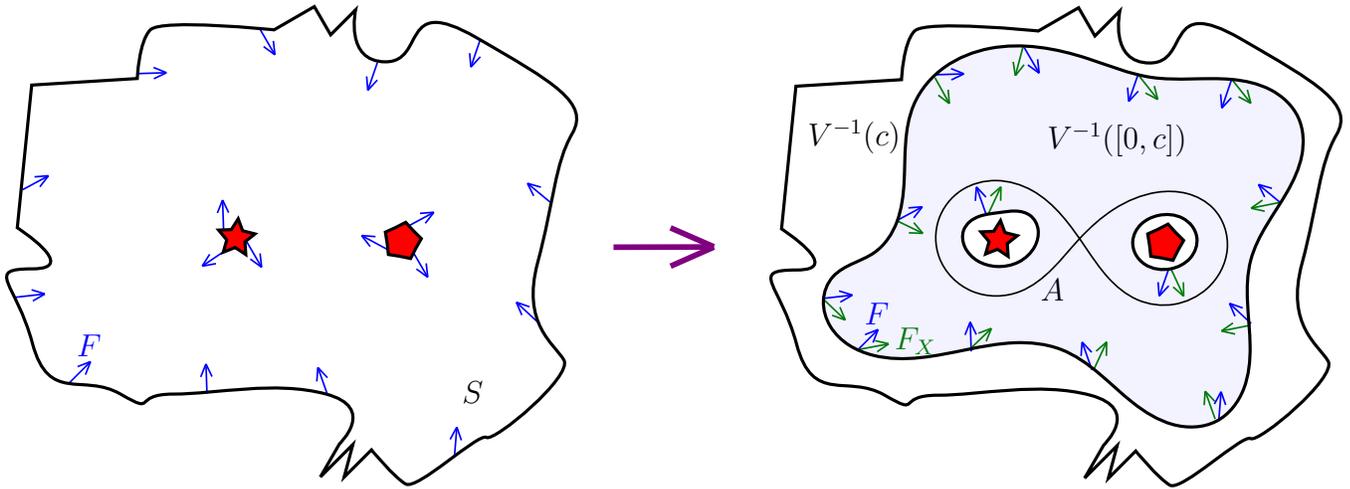}
   	\caption{An illustration of the proof of Theorem~\ref{th:safety}. 
   	The idea is that, if $S$ is savable (Def.~\ref{def:safety}), then $S$ can be made strictly positively invariant (Def.~\ref{def:strictly-inflowing}) for some closed-loop vector field $F$, which implies (Lem.~\ref{lem:strict-inflowing-att-euler}) that there exists some asymptotically stable invariant set $\att \subset \interior(S)$ satisfying $\chi(\att) = \chi(S)$.
   	It follows that the situation in the proof of Theorem~\ref{th:generalized-brockett} becomes embedded within $\interior(S)$, as illustrated by Fig.~\ref{fig:thm1-proof} and the right side of the present figure.
   	This yields an immediate proof of Theorem~\ref{th:safety}.
   	In the present figure we are also emphasizing the fact that $\partial S$ does not need to be smooth.}\label{fig:thm2-proof}
   \end{figure}

   The following is the second of our two main results; it follows directly from Theorem~\ref{th:generalized-brockett} and Lem.~\ref{lem:strict-inflowing-att-euler} (see Fig.~\ref{fig:thm2-proof}).
   \begin{Th}\label{th:safety}
   Let $(\ctrl,\st,p,f)$ be a control system and $S\subset \st$ be a precompact subset.
   Assume that $S$ is savable.
   \begin{itemize}
   \item Then the Euler characteristic $\chi(S)$ is well-defined according to Def.~\ref{def:euler-cech}.
   \item Assume additionally that $\chi(S)\neq 0$.
   Then there exists a neighborhood $\cV\subset \T \st$ of $0_{\T \st}$ such that, if $X$ is any continuous vector field taking values in $\cV$, then $$f(p^{-1}(S))\cap X(S)\neq \varnothing.$$
   \end{itemize}
    \end{Th}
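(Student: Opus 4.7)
The plan is to deduce Theorem~\ref{th:safety} as a direct consequence of Theorem~\ref{th:generalized-brockett} and Lem.~\ref{lem:strict-inflowing-att-euler}, as suggested by the sentence preceding the statement.

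First, since $S$ can be rendered safe, Def.~\ref{def:safety} provides a control law $u\colon \st \to \ctrl$ such that the closed-loop vector field $F \coloneqq f\circ u$ is continuous, uniquely integrable, and has $S$ as a strictly positively invariant subset. Applying Lem.~\ref{lem:strict-inflowing-att-euler} with this $F$, I obtain a unique maximal compact asymptotically stable subset $\att$ contained in $R \coloneqq \interior(S)$, together with the conclusion that $\chi(\att)$, $\chi(S)$, and $\chi(R)$ are all well-defined via Def.~\ref{def:euler-cech} and satisfy $\chi(\att)=\chi(S)=\chi(R)$. This already yields the first bullet.

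For the second bullet, I assume $\chi(S)\neq 0$, so also $\chi(\att)\neq 0$. The crucial observation is that the very same control law $u$ stabilizes $\att$ in the sense of Def.~\ref{def:stabilizable}: condition (i) of that definition holds by the safety hypothesis on $u$, and condition (ii) is exactly the content of the asymptotic stability of $\att$ just extracted from Lem.~\ref{lem:strict-inflowing-att-euler}. Hence Theorem~\ref{th:generalized-brockett} is applicable. I apply it with the open neighborhood $\cW \coloneqq R = \interior(S)$ of $\att$, obtaining a neighborhood $\cV \subset \T\st$ of the zero section such that every continuous vector field $Y\colon \cW \to \cV$ satisfies
\[
f(p^{-1}(\cW)) \cap Y(\cW) \neq \varnothing.
\]

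To conclude, given any continuous vector field $X$ taking values in $\cV$, I set $Y \coloneqq X|_{R}$ and apply the above; using $R\subset S$ and hence $p^{-1}(R)\subset p^{-1}(S)$ and $X(R)\subset X(S)$, the desired
\[
f(p^{-1}(S)) \cap X(S) \;\supset\; f(p^{-1}(R)) \cap X(R) \neq \varnothing
\]
follows. There is no real obstacle here beyond the bookkeeping of identifying $\att$ as stabilizable under the same $u$ that renders $S$ safe and choosing the neighborhood $\cW$ inside $S$; the two preparatory results carry all the substantive content.
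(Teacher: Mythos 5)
Your proof is correct and follows exactly the route the paper intends: the paper gives no separate proof of Theorem~\ref{th:safety}, stating only that it follows directly from Theorem~\ref{th:generalized-brockett} and Lem.~\ref{lem:strict-inflowing-att-euler}, and your write-up is precisely that deduction (extract the maximal asymptotically stable set $\att\subset\interior(S)$, note the same control law stabilizes it, apply Theorem~\ref{th:generalized-brockett} with $\cW=\interior(S)$, and use $\chi(\att)=\chi(S)$ together with the inclusions $\interior(S)\subset S$).
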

    
\begin{Rem}[cf. Rem.~\ref{rem:clf}]\label{rem:cbf}
   For a broad subclass of control systems \eqref{eq:control-system}, existence of a ``control barrier function'' \cite{ames2019control} for $S$ implies existence of a continuous safe feedback \cite{ames2017cbf}, so Theorem~\ref{th:safety} can be used to rule out the existence of a control barrier function.
   Ruling out the need to search for one might save valuable time and computational resources \cite{papachristodoulou2005tutorial}.
\end{Rem}

\section{Examples}\label{sec:examples}   
We illustrate Theorems \ref{th:generalized-brockett} and \ref{th:safety} on Ex.~\ref{ex:heisenberg-1} and \ref{ex:kinematic-unicycle-1} below, respectively, as well as in Ex.~\ref{ex:satellite}.
An important point demonstrated is that neither controllability nor accessibility \cite[pp.~201--202]{bloch2015nonholonomic} imply the existence of smooth (or continuous and uniquely integrable) feedback stabilizing or rendering safe various subsets of state space.
We also illustrate Theorems~\ref{th:generalized-brockett} and \ref{th:safety} on a nonholonomic mechanical system in Ex.~\ref{ex:vert-rolling-disk}, but we defer that example to \S \ref{sec:nonholonomic} in order to take advantage of Prop.~\ref{prop:nonhol-lagrangian}.

\begin{Ex}[Heisenberg system]\label{ex:heisenberg-1}
Consider the Heisenberg system
\begin{equation}\label{eq:heisenberg-ex}
\begin{split}
\dot{x} &= u\\
\dot{y}&= v\\
\dot{z} &= yu-xv 
\end{split}
\end{equation}
from \S \ref{sec:intro}, shown here again for convenience. 
As observed in \cite{brockett1983asymptotic}, the adversaries $X_\epsilon\coloneqq \epsilon \frac{\partial}{\partial z}$ are not in the image of the control system $f(x,y,z,u,v)$ defined by the right side of \eqref{eq:heisenberg-ex} for any $\epsilon \neq 0$.\footnote{Defining $f(x,y,z,u,v)$ to be the right side of \eqref{eq:heisenberg-ex}, denoting by $p\colon \R^n\times \R^m\to \R^n$ the projection onto the first factor, and identifying $f$ with a $\T \R^n \approx \R^n \times \R^n$ valued map, \eqref{eq:heisenberg-ex} defines a control system $(\R^n \times \R^m, \R^n, p, f)$ in the sense of \eqref{eq:control-system}.}
It follows from \cite[Thm~1.(iii)]{brockett1983asymptotic} that no point $x_*\in \R^3$ can be made asymptotically stable by a continuously differentiable control law $u(x,y,z), v(x,y,z)$.

Since $X_\epsilon\to 0$ uniformly on compact sets as $\epsilon \to 0$, the following stronger fact follows from Theorem~\ref{th:generalized-brockett}: no compact subset $\att\subset \R^3$ with a nonzero Euler characteristic $\chi(\att)$ (Def.~\ref{def:euler-cech}) can be made asymptotically stable by continuously differentiable feedback $u(x,y,z), v(x,y,z)$.\footnote{For example, if $\att\subset 
\R^3$ is a topological submanifold with (or without) boundary, then it is not possible to render $\att$ asymptotically stable by such feedback if $\att$ is not homeomorphic to one of the following: the circle $\sph^1$, the cylinder $\sph^1\times [0,1]$, the torus $\tor^2$, the M\"{o}bius band $\mob$, or a $3$-dimensional manifold with boundary; this is because these are the only topological manifolds with boundary embeddable in $\R^3$ which also have zero Euler characteristic (see App.~\ref{app:low-dim-mfld-euler-zero} and note that the Klein bottle does not topologically embed in $\R^3$ \cite[p.~256]{hatcher2001algebraic}).\label{foot:heisenberg-zero-euler-char-manifolds}}
Moreover, we do not need to assume that the control law is continuously differentiable or even locally Lipschitz; we just need to assume that the closed-loop vector field $(x,y,z)\mapsto f(x,y,z,u(x,y,z),v(x,y,z))$ is continuous and uniquely integrable.    

Of course, Theorem~\ref{th:safety} can also be applied to deduce interesting conclusions using the adversaries $X_\epsilon$, but to minimize redundancy we instead discuss Theorem~\ref{th:safety} in Ex.~\ref{ex:kinematic-unicycle-1} below.
\end{Ex}

\begin{figure}
	\centering
	\def\svgwidth{0.4\columnwidth}
\begingroup%
  \makeatletter%
  \providecommand\color[2][]{%
    \errmessage{(Inkscape) Color is used for the text in Inkscape, but the package 'color.sty' is not loaded}%
    \renewcommand\color[2][]{}%
  }%
  \providecommand\transparent[1]{%
    \errmessage{(Inkscape) Transparency is used (non-zero) for the text in Inkscape, but the package 'transparent.sty' is not loaded}%
    \renewcommand\transparent[1]{}%
  }%
  \providecommand\rotatebox[2]{#2}%
  \newcommand*\fsize{\dimexpr\f@size pt\relax}%
  \newcommand*\lineheight[1]{\fontsize{\fsize}{#1\fsize}\selectfont}%
  \ifx\svgwidth\undefined%
    \setlength{\unitlength}{383.4678973bp}%
    \ifx\svgscale\undefined%
      \relax%
    \else%
      \setlength{\unitlength}{\unitlength * \real{\svgscale}}%
    \fi%
  \else%
    \setlength{\unitlength}{\svgwidth}%
  \fi%
  \global\let\svgwidth\undefined%
  \global\let\svgscale\undefined%
  \makeatother%
  \begin{picture}(1,1)%
    \lineheight{1}%
    \setlength\tabcolsep{0pt}%
    \put(0,0){\includegraphics[width=\unitlength,page=1]{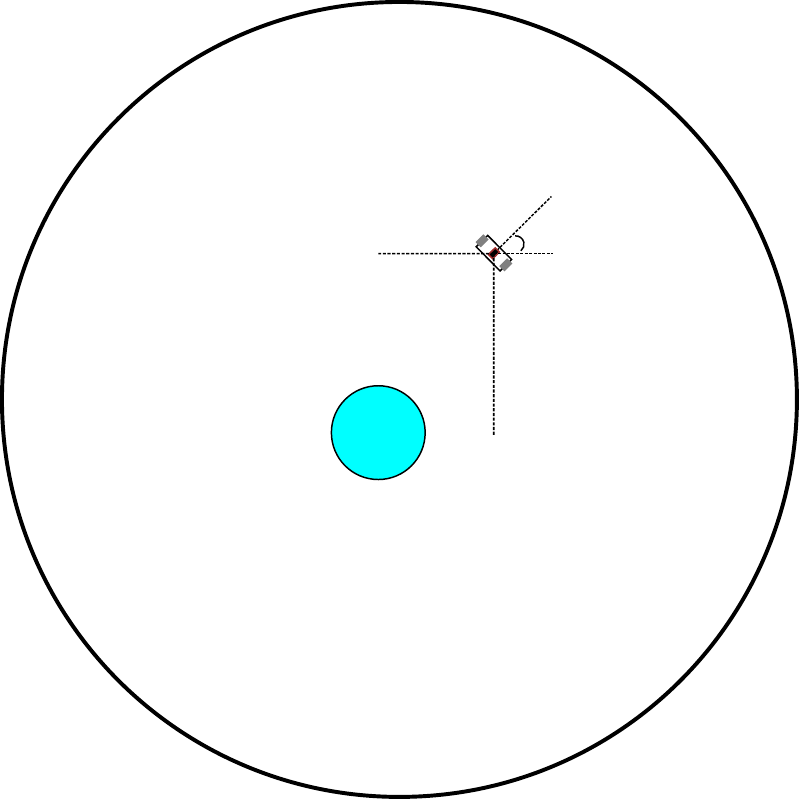}}%
    \put(0.68591159,0.6949397){\color[rgb]{0,0,0}\makebox(0,0)[lt]{\lineheight{1.25}\smash{\begin{tabular}[t]{l}$\theta$\end{tabular}}}}%
    \put(0,0){\includegraphics[width=\unitlength,page=2]{unicycle.pdf}}%
    \put(0.60767845,0.40156482){\color[rgb]{0,0,0}\makebox(0,0)[lt]{\lineheight{1.25}\smash{\begin{tabular}[t]{l}$x$\end{tabular}}}}%
    \put(0.42382988,0.67929278){\color[rgb]{0,0,0}\makebox(0,0)[lt]{\lineheight{1.25}\smash{\begin{tabular}[t]{l}$y$\end{tabular}}}}%
    \put(0,0){\includegraphics[width=\unitlength,page=3]{unicycle.pdf}}%
  \end{picture}%
\endgroup%

	\caption{This figure illustrates Ex.~\ref{ex:kinematic-unicycle-1}.
	It is desired for a differential-drive robot to simultaneously (i) aim the line of sight of a mounted camera to point within 90 degrees of the origin, (ii) avoid $n\geq 1$ obstacles, and (iii) remain inside a big disk and outside a small disk centered at the origin.
	For simplicity, in Ex.~\ref{ex:kinematic-unicycle-1} we assume that the size of the robot itself is negligible.
	(In the figure, the camera is aimed \emph{away} from the origin.)}\label{fig:unicycle}
\end{figure}

\begin{Ex}[Kinematic unicycle]\label{ex:kinematic-unicycle-1}
Consider the kinematic unicycle model
\begin{equation}\label{eq:kin-uni-ex}
\begin{split}
\dot{x} &= \cos(\theta)u\\
\dot{y}&= \sin(\theta)u\\
\dot{\theta} &= v 
\end{split}
\end{equation}
of a differential drive robot from \S \ref{sec:intro}, shown here again for convenience.
Let us imagine a differential drive robot mounted with a fixed-angle camera with which we want the robot to autonomously film an experiment happening at the origin $(x,y)=(0,0)\in \R^2$; see Fig.~\ref{fig:unicycle}.
The camera has a viewing angle of $180$ degrees, so we would like the robot not to face away from the origin: that is, we would like
\begin{equation*}
(x,y,\theta)\in S_0\coloneqq \{x\cos(\theta)+y\sin(\theta)\leq 0\}.
\end{equation*}
The camera should not get closer than some distance $\delta > 0$ to the origin, and the camera also has a finite range $R>0$, so we would also like 
\begin{equation*}
(x,y,\theta)\in S_1\coloneqq \{\delta^2 \leq x^2+y^2\leq R^2\}.
\end{equation*}
Next, we imagine that there are $n\geq 1$ obstacles $\mathcal{O}_1,\ldots, \mathcal{O}_n \subset \R^2$ contained in $\{(x,y)\colon \delta^2 < x^2 + y^2 < R^2\}$ which are bounded by continuous simple closed curves (i.e., images of continuous injective maps $\sph^1\to \R^2$ from the circle), and which we do not want the robot to come into contact with. 
For simplicity, we assume that the size of the robot itself is negligible. 
Finally, we would like to control \eqref{eq:kin-uni-ex} by purely state-dependent feedback, so as to generate robust, purely ``reactive'' behavior; we would also like the behavior to be deterministic and depend  continuously on the state (e.g., to avoid ``chattering''), so we would also like the control law to be time-independent and continuous, and such that the closed-loop vector field is uniquely integrable.

Define $\mathcal{O}\coloneqq \bigcup_{i=1}^n \mathcal{O}_i$, and define the set 
\begin{equation*}
S\coloneqq \{(x,y,\theta)\in S_0\cap S_1\colon (x,y)\not \in \mathcal{O}\}.
\end{equation*}
Then the desiderata of the preceding paragraph will be obtained if, in the terminology of Def.~\ref{def:safety}, $S$ is savable by a control law $u(x,y,\theta), v(x,y,\theta)$ (which, as part of Def.~\ref{def:safety}, requires that the closed-loop vector field be continuous and uniquely integrable).
Using Theorem~\ref{th:safety}, we show below that this seemingly reasonable task is impossible; some of the desiderata (e.g., continuity of the control law) must be relaxed.

To do this, we first observe that $S$ deformation retracts onto 
\begin{equation}\label{eq:S-def-retract}
\{(x,y,\theta)\in S\colon (\cos(\theta),\sin(\theta))=-(x,y)/\sqrt{x^2+y^2}\}
\end{equation}
via the continuous homotopy which simply turns the robot in place to face the origin (this homotopy is well-defined and continuous on $S$ since $S_0\supset S$).
Next, the set in \eqref{eq:S-def-retract} is homeomorphic to $\{(x,y)\in \R^2\setminus \mathcal{O}\colon \delta^2\leq x^2 + y^2 \leq R^2\}$, which is homotopy equivalent to a punctured disk with $n$ other disks removed from its interior.
The Euler characteristic of the latter set is equal to $1-1-n = -n$, and the Euler characteristic is a homotopy invariant, so
\begin{equation}\label{eq:kin-uni-euler}
\chi(S)=-n\neq 0.
\end{equation}

Next, notice that the adversaries $X_\epsilon\coloneqq \epsilon (\sin(\theta) \frac{\partial}{\partial x}-\cos(\theta)\frac{\partial}{\partial y})$ are not in the image of the control system $f(x,y,z,u,v)$ defined by the right side of \eqref{ex:kinematic-unicycle-1} for any $\epsilon \neq 0$.
Since $X_\epsilon\to 0$ uniformly on compact sets as $\epsilon \to 0$, \eqref{eq:kin-uni-euler} and Theorem~\ref{th:safety} imply that $S$ is not savable in the sense of Def.~\ref{def:safety}, as claimed above.
The same result follows for any set $S$, as long as $S$ is precompact and has a well-defined and nonzero Euler characteristic; the ``bad'' set $(\R^2\times \sph^1)\setminus S$ might represent physical and/or perceptual obstacles \cite{lopes2007visual} differing from those in the present example.

Theorem~\ref{th:generalized-brockett} can also be applied to deduce that no compact subset $\att \subset \R^2\times \sph^1$ with nonzero Euler characteristic $\chi(\att)$ is stabilizable (Def.~\ref{def:stabilizable}) for \eqref{eq:kin-uni-ex}.
See Footnote~\ref{foot:heisenberg-zero-euler-char-manifolds} for some consequences of this fact valid also for the present example. 
(However, for the present example, the fact that the Klein bottle $\klein$ does not topologically embed in $\R^2 \times \sph^1$ instead follows since $\R^2 \times \sph^1$ topologically embeds in $\sph^2\times \sph^1$ but $\klein$ does not \cite[Prop.~4.3]{jaco1970surfaces}).
\end{Ex}

The conclusions in the next example follow from Prop.~\ref{prop:aff-gen-arg} in \S \ref{sec:affine}, but we provide a self-contained analysis here.
\begin{Ex}[Satellite orientation]\label{ex:satellite}
Consider the Euler equations for the motion of a rigid body with two control torques but otherwise moving freely in space, imagined to describe a satellite actuated by two thruster jets \cite{byrnes1991attitude,bloch2015nonholonomic}: 
\begin{equation}\label{eq:euler}
\begin{split}
\dot{R} &= R\hat{\omega}\\
\dot{\omega} &= (I\omega)\times \omega + g_1(R,\omega)u_1 + g_2(R,\omega)u_2 .
\end{split}
\end{equation}
Here the state space $\st = \SO(3)\times \R^3$,  $R\in \SO(3)$ is a rotation matrix describing the orientation of a coordinate frame fixed in the body, $\omega\in \R^3$ is the vector of angular velocities with respect to the body frame, $I$ is the moment of inertia tensor with respect to the body frame, $\times$ is the cross product, the maps $(R,\omega)\mapsto g_i(R,\omega)\in \R^3$ are locally Lipschitz, and $\hat{(\slot)}\colon \R^3\to \mathfrak{so}(3)$ is the standard ``hat map'' defined by 
$$\hat{\omega}\coloneqq \begin{bmatrix}
0& -\omega_3 & \omega_2\\
\omega_3 & 0 & -\omega_1\\
-\omega_2 & \omega_1 & 0
\end{bmatrix},$$
where $\omega = (\omega_1,\omega_2,\omega_3)$.
For simplicity, we assume that $g_1(R,\omega)$ and $g_2(R,\omega)$ are linearly independent for all $(R,\omega)$.
For the case of thruster jets fixed to a satellite's body the $g_i$ would be constant, but treating the non-constant case is equally easy for us.

Let $(R,\omega)\mapsto X(R,\omega)$ be the vector field on $\st = \SO(3)\times \R^3$ given by $X = (0, g_1\times g_2)$, and consider the adversaries $X_\epsilon \coloneqq \epsilon X$ for $\epsilon \neq 0$.
We claim that $X_\epsilon$ is not in the image of the control system $$f(R,\omega,u_1,u_2) = (f_R(R,\omega,u_1,u_2), f_\omega(R,\omega,u_1,u_2))$$ defined by the right side of \eqref{eq:euler} for $\epsilon\neq 0$. 
To see this, first note that $f(R,\omega,u_1,u_2) = X_\epsilon(R,\omega,u_1,u_2)$ implies that $0 = f_R(R,\omega,u_1,u_2) = R\hat{\omega}$, which forces $\omega = 0$.
This in turn implies that $(g_1\times g_2)(R,\omega)$ must belong to the span of $g_1(R,\omega)$ and $g_2(R,\omega)$, but this is impossible since we have assumed that the $g_i$ are linearly independent everywhere.
This establishes the claim.

Since $X_\epsilon\to 0$ uniformly on compact sets as $\epsilon \to 0$, Theorem~\ref{th:generalized-brockett} implies that no compact subset $\att\subset \st$ with a nonzero Euler characteristic $\chi(\att)$ (Def.~\ref{def:euler-cech}) can be made asymptotically stable by locally Lipschitz feedback $u_1(R,\omega), u_2(R,\omega)$.\footnote{However, it is known \cite[Sec.~5]{byrnes1991attitude}---at least for certain choices of the $g_i$--- that it is possible to asymptotically stabilize topological circles, which have zero Euler characteristic.}
A special case of this conclusion is \cite[Cor.~1]{byrnes1991attitude}.
Similarly, it follows from Theorem~\ref{th:safety} that any precompact subset $S\subset \st$ having a well-defined (according to Def.~\ref{def:euler-cech}) and nonzero Euler characteristic is not savable (Def.~\ref{def:safety}).
\end{Ex}

\section{Applications}\label{sec:applications} 
In this section we discuss some implications of the results of \S \ref{sec:main}. 
An application to a class of affine control systems is given in \S \ref{sec:affine}, and an application to nonholonomic Lagrangian control systems is given in \S \ref{sec:nonholonomic}.
The latter application is illustrated on a model of a vertical rolling disk in Ex.~\ref{ex:vert-rolling-disk}. 

    \subsection{A class of systems affine in control}\label{sec:affine}
    Let $\st$ be a smooth manifold, and consider the control system
    \begin{equation}
    \label{eq:affine-control-system}
    \dot{x} = F(x) + \sum_{i=1}^Ng_i(x)u_i
    \end{equation}
    which is affine in the control inputs, where $F,g_1,\ldots, g_n$ are vector fields on $\st$ which are not assumed to be continuous.
    Formally, \eqref{eq:affine-control-system} defines a control system $(\ctrl,\st,p,f)$ in the sense of \eqref{eq:control-system} with $\ctrl = \st \times \R^N$, $f(x,u)$ equal to the right side of \eqref{eq:affine-control-system} with $u = (u_1,\ldots, u_N)$, and $p\colon M \times \R^N\to M$ projection onto the first factor.

    \begin{Prop}\label{prop:aff-gen-arg}
    Consider the control system defined by \eqref{eq:affine-control-system}.
    Let $\att\subset \st$ be a compact subset and $S\subset \st$ be a precompact subset.
    Assume there exists a continuous vector field $Y$ on $\st$ such that, for all $x\in \st$:
    \begin{align}
     F(x) \neq 0 & \implies F(x)\not \in \textnormal{span}\{Y(x),g_1(x),\ldots, g_N(x)\} \label{eq:aff-gen-arg-1}\\
    F(x) = 0 &\implies Y(x)\not \in \textnormal{span}\{g_1(x),\ldots, g_N(x)\}\label{eq:aff-gen-arg-2}.
    \end{align}     
    Then:
    \begin{itemize}
    \item if either (i) the Euler characteristic $\chi(\att)$ is not well-defined according to Def.~\ref{def:euler-cech}, or if (ii) $\chi(\att)$ is well-defined and $\chi(\att)\neq 0$, then $\att$ is not stabilizable (Def.~\ref{def:stabilizable}); and 
    \item  if either (i) the Euler characteristic $\chi(S)$ is not well-defined according to Def.~\ref{def:euler-cech}, or if (ii) $\chi(S)$ is well-defined and $\chi(S)\neq 0$, then $S$ is not savable (Def.~\ref{def:safety}).
    \end{itemize}
    \end{Prop}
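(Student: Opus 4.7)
The plan is to apply Theorems~\ref{th:generalized-brockett} and \ref{th:safety} using $Y$ (rescaled by a small positive constant) as the adversary vector field. For both bullets, the first case (Euler characteristic not well-defined) is immediate from the well-definedness parts of those two theorems: if $\att$ were stabilizable, $\chi(\att)$ would be well-defined by Theorem~\ref{th:generalized-brockett}, and if $S$ could be rendered safe, $\chi(S)$ would be well-defined by Theorem~\ref{th:safety}. So I will only need to argue the contrapositive for the second case, when the Euler characteristic is well-defined and nonzero.

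Suppose for contradiction that $\att$ is stabilizable with $\chi(\att)\neq 0$. Fix a precompact open neighborhood $\cW\subset \st$ of $\att$. By Theorem~\ref{th:generalized-brockett}, there is a neighborhood $\cV\subset \T\st$ of the zero section such that any continuous vector field $X\colon \cW\to \cV$ must intersect $f(p^{-1}(\cW))$. The heart of the argument is to construct, for every such $\cV$, a continuous vector field $X$ on $\cW$ taking values in $\cV$ which \emph{never} equals $f(x,u)$ for any $(x,u)\in p^{-1}(\cW)$; this contradicts Theorem~\ref{th:generalized-brockett}. I will take $X\coloneqq \epsilon Y|_\cW$ for a sufficiently small $\epsilon > 0$. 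For the safety bullet, the same construction works with $\cW$ replaced by $S$, appealing to Theorem~\ref{th:safety}.

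To see that $\epsilon Y$ is never in the image of $f$, suppose $\epsilon Y(x) = F(x)+\sum_{i=1}^N g_i(x)u_i$ for some $x\in \st$ and $u\in\R^N$. If $F(x)\neq 0$, rearranging gives $F(x)=\epsilon Y(x)-\sum_i g_i(x) u_i\in \textnormal{span}\{Y(x),g_1(x),\ldots,g_N(x)\}$, contradicting \eqref{eq:aff-gen-arg-1}. If $F(x) = 0$, then $\epsilon Y(x)=\sum_i g_i(x)u_i$; since $\epsilon > 0$ we conclude $Y(x)\in \textnormal{span}\{g_1(x),\ldots,g_N(x)\}$, contradicting \eqref{eq:aff-gen-arg-2}. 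So regardless of $\epsilon > 0$, the rescaled adversary $\epsilon Y$ avoids the image of $f$ entirely.

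The one technical point to handle is that $\epsilon Y|_\cW$ (respectively $\epsilon Y|_S$) must land in the prescribed neighborhood $\cV$ of the zero section. Since $\cW$ (resp.\ $S$) is precompact and $Y$ is continuous, I can fix any Riemannian metric on $\st$ and observe that $Y$ is bounded in norm on $\cl(\cW)$ (resp.\ $\cl(S)$); any open neighborhood of the zero section of $\T\st$ contains a tubular neighborhood $\{v\in\T_x\st\colon x\in \cl(\cW),\, \norm{v}_x<\delta\}$ over this compact base for some $\delta > 0$, so choosing $\epsilon$ small enough forces $\epsilon Y$ into $\cV$. I do not anticipate any serious obstacle; the only subtlety worth spelling out carefully is this last selection of $\epsilon$, and the algebraic contradiction above is where the two hypotheses \eqref{eq:aff-gen-arg-1} and \eqref{eq:aff-gen-arg-2} are used in an essential but straightforward way.
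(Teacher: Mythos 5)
Your proposal is correct and follows essentially the same route as the paper's proof: handle the ill-defined Euler characteristic cases via the first conclusions of Theorems~\ref{th:generalized-brockett} and \ref{th:safety}, then use the rescaled adversaries $\epsilon Y$ together with the span conditions \eqref{eq:aff-gen-arg-1} and \eqref{eq:aff-gen-arg-2} to show these adversaries avoid the image of $f$ for every $\epsilon>0$. Your explicit verification that $\epsilon Y$ can be forced into the prescribed neighborhood $\cV$ of the zero section (via compactness of $\cl(\cW)$ and a uniform $\delta$-tube) spells out a detail the paper compresses into the remark that $X_\epsilon\to 0$ uniformly on compact sets.
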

    \begin{Rem}
     The dimensions of the spans in \eqref{eq:aff-gen-arg-1} and \eqref{eq:aff-gen-arg-2} are not assumed to be constant.
    \end{Rem}    
    \begin{Rem}\label{rem:byrnes-isidori}
    A very special case of Prop.~\ref{prop:aff-gen-arg} is the ``only if'' portion of \cite[Thm~1]{byrnes1991attitude}.
    \end{Rem}

    \begin{proof}[Proof of Prop.~\ref{prop:aff-gen-arg}]
    If the Euler characteristic of $\att$ is not well-defined, then the first conclusion of Theorem~\ref{th:generalized-brockett} implies that $\att$ is not stabilizable.
    Similarly, if the Euler characteristic of $S$ is not well-defined, then the first conclusion of Theorem~\ref{th:safety} implies that $S$ is not savable.
    Thus, we henceforth assume that $\chi(\att)$ and $\chi(S)$ are both well-defined and nonzero.
    
    For each $\epsilon > 0$ we define the continuous vector field $X_\epsilon \coloneqq \epsilon Y$ on $\st$.    
    We claim that 
    \begin{equation}\label{eq:aff-gen-arg-adversary-contra}
    \forall x\in \st, u\in \R^N, \epsilon \in (0,\epsilon_0)\colon X_\epsilon(x) \neq F(x) + \sum_{i=1}^N g_i(x)u_i.
    \end{equation}
    Indeed, \eqref{eq:aff-gen-arg-1} implies that $X_\epsilon(x) \neq F(x) + \sum_{i=1}^N g_i(x)u_i$ whenever $F(x)\neq 0$, and \eqref{eq:aff-gen-arg-2} implies the same when $F(x)=0$.
    This establishes \eqref{eq:aff-gen-arg-adversary-contra}.
    Since $X_\epsilon \to 0$ uniformly on compact sets as $\epsilon \to 0$, considering the adversaries $X = X_\epsilon$ in the final statements of Theorems \ref{th:generalized-brockett} and \ref{th:safety} implies that $\att$ is not stabilizable and $S$ is not savable.     
    \end{proof}

\subsection{Nonholonomically constrained Lagrangian systems}\label{sec:nonholonomic}
We first prove a general result about \emph{second-order} control systems on vector subbundles of a tangent bundle, of which the nonholonomic systems we consider are a special case.

Let $Q$ be a smooth manifold, $\pi_{\T Q}\colon \T Q\to Q$ be its tangent bundle projection and let $\cD\subset \T Q$ be a smooth vector subbundle or (constant rank) \concept{distribution}.
The \concept{rank} of $\cD$ is the dimension of its fibers.
Referring to \eqref{eq:control-system}, we consider here any control system of the form $(\ctrl, \st,p,f) = (\ctrl, \cD, p, f)$  satisfying
\begin{equation}\label{eq:second-order}
\D \pi_{\T Q}\circ f= \iota \circ p,
\end{equation}
where $f\colon \ctrl \to \T \cD\subset \T(\T Q)$, $p\colon \ctrl \to \cD$, $\D \pi_{\T Q}\colon \T(\T Q)\to \T Q$ is the tangent map (derivative) of $\pi_{\T Q}$, and $\iota\colon \cD\hookrightarrow \T Q$ is the inclusion.
We refer to such a control system as \concept{second order}.
To see why, let $(q^i, v^\mu)$ be the coordinates of a local trivialization induced by a smooth local frame $(e_\mu)_{\mu=1}^{\rank \cD}$ for $\cD$ defined over a chart for $Q$, and first note that the effect of the system evolving according to $f = \sum_{i}f^i \frac{\partial}{\partial q^i} + \sum_{\mu}f^\mu \frac{\partial}{\partial v^\mu}$ is the following: 
\begin{equation}\label{eq:vb-local-second-order-1}
\end{equation}
$$\dot{q}^i = f^i, \qquad \dot{v}^\mu = f^\mu.$$
Next, observe that \eqref{eq:second-order} and \eqref{eq:vb-local-second-order-1} imply $\sum_i \dot{q}^i \frac{\partial}{\partial q^i} = \sum_i f^i \frac{\partial}{\partial q^i}  = \sum_\mu v^\mu e_\mu$. 
Since we may express each $e_\mu$ as $e_\mu = \sum_{i}c^i_\mu \frac{\partial}{\partial q^i}$ with suitable smooth coefficients $c^i_\mu(q) \in \R$, it follows that $\dot{q}^i = \sum_{\mu}c^i_\mu v^\mu$; differentiating the latter expression with respect to time, it follows that \eqref{eq:vb-local-second-order-1} can be expressed as a constrained system of second order equations:
\begin{equation*}
\ddot{q}^i = \sum_{\mu} c^i_\mu f^\mu + \sum_\mu \sum_j \frac{\partial c^i_\mu}{\partial q^j}\dot{q}^jv^\mu, \qquad \dot{q}\in \cD,  
\end{equation*}
where $v_\mu$ can be expressed as a smooth function of the local coordinates $(q^i,\dot{q}^j)$ for $\T Q$, justifying the terminology.
   
\begin{Prop}\label{prop:general-second-order-cs-on-bundle-theorem}
Let $(\ctrl,\cD,p,f)$ be a second-order control system on a smooth vector subbundle $\cD\subset \T Q$ with $\rank \cD < \dim Q$.
Let $\att\subset \cD$ be a compact subset and $S\subset \cD$ be a precompact subset.
Assume there exists a continuous vector field $Y$ on $Q$ which is nowhere $\cD$-valued.
Then:
\begin{itemize}
\item if either (i) the Euler characteristic $\chi(\att)$ is not well-defined according to Def.~\ref{def:euler-cech}, or if (ii) $\chi(\att)$ is well-defined and $\chi(\att)\neq 0$, then $\att$ is not stabilizable (Def.~\ref{def:stabilizable}); and 
\item  if either (i) the Euler characteristic $\chi(S)$ is not well-defined according to Def.~\ref{def:euler-cech}, or if (ii) $\chi(S)$ is well-defined and $\chi(S)\neq 0$, then $S$ is not savable (Def.~\ref{def:safety}).
\end{itemize}

    \end{Prop}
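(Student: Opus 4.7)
The plan is to apply Theorems~\ref{th:generalized-brockett} and~\ref{th:safety} to a one-parameter family of continuous ``adversary'' vector fields $X_\epsilon$ on the state manifold $\cD$ which tend uniformly to zero on compact sets as $\epsilon \to 0$ and whose images never meet the image of $f$. Case~(i) of each bullet is immediate from the first bullets of those two theorems (stabilizability of $\att$ forces $\chi(\att)$ to be well-defined via Proposition~\ref{prop:cech-cohom-attractor}, and the possibility of rendering $S$ safe forces $\chi(S)$ to be well-defined via Lemma~\ref{lem:strict-inflowing-att-euler}). It therefore suffices to treat case~(ii): we assume $\chi(\att) \neq 0$ (respectively $\chi(S) \neq 0$) and derive a contradiction from stabilizability (respectively safety).

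To construct the adversaries, fix any smooth linear connection on the vector bundle $\pi_{\T Q}|_{\cD}\colon \cD \to Q$; this yields a smooth horizontal distribution $H \subset \T\cD$ complementary to the vertical subbundle $\ker \D(\pi_{\T Q}|_\cD)$ and such that $\D\pi_{\T Q}|_{H_w}\colon H_w \to \T_{\pi_{\T Q}(w)}Q$ is a linear isomorphism for every $w \in \cD$. Define the continuous horizontal lift
\[
\tilde{Y}(w) \coloneqq \bigl(\D\pi_{\T Q}|_{H_w}\bigr)^{-1}\bigl(Y(\pi_{\T Q}(w))\bigr), \qquad w \in \cD,
\]
and set $X_\epsilon \coloneqq \epsilon\,\tilde{Y}$. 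If $X_\epsilon(w) = f(\xi)$ for some $\epsilon \neq 0$ and some $\xi \in p^{-1}(w)$, then applying $\D\pi_{\T Q}$ to both sides and invoking the second-order condition~\eqref{eq:second-order} gives
\[
\epsilon\,Y(\pi_{\T Q}(w)) \;=\; \D\pi_{\T Q}(f(\xi)) \;=\; \iota(p(\xi)) \;=\; w \;\in\; \cD_{\pi_{\T Q}(w)},
\]
so $Y(\pi_{\T Q}(w)) \in \cD_{\pi_{\T Q}(w)}$, contradicting the hypothesis that $Y$ is nowhere tangent to $\cD$. Hence the image of $X_\epsilon$ is disjoint from the image of $f$ whenever $\epsilon \neq 0$.

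For the stabilization assertion, choose any precompact neighborhood $\cW$ of $\att$ in $\cD$ and apply Theorem~\ref{th:generalized-brockett} to obtain a neighborhood $\cV \subset \T\cD$ of the zero section; by continuity of $\tilde{Y}$ and compactness of $\cl(\cW)$, the restriction $X_\epsilon|_\cW$ takes values in $\cV$ once $\epsilon > 0$ is sufficiently small, whereupon the theorem forces $f(p^{-1}(\cW)) \cap X_\epsilon(\cW) \neq \varnothing$, contradicting the previous paragraph. The safety assertion follows in identical fashion from Theorem~\ref{th:safety} applied to $S$, after restricting $X_\epsilon$ to a precompact neighborhood of $\cl(S)$ if needed. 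The one nontrivial technical step is the construction of $\tilde{Y}$: because $Y$ lives on the base $Q$ while the adversary must live on the total space $\cD$, one needs a continuous mechanism for lifting base vector fields to total-space vector fields projecting correctly under $\D\pi_{\T Q}$, and this is precisely the role played by the smooth connection on $\cD \to Q$.
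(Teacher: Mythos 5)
Your proof is correct and takes essentially the same route as the paper's: lift $Y$ to a continuous vector field $\tilde{Y}$ on $\cD$ projecting to $Y$ under $\D\pi_{\T Q}$, scale by $\epsilon$, use the second-order identity \eqref{eq:second-order} to conclude that the images of $X_\epsilon$ and $f$ are disjoint for $\epsilon\neq 0$, and then feed these adversaries into Theorems~\ref{th:generalized-brockett} and~\ref{th:safety}. The only (harmless) difference is that you construct the lift explicitly via a smooth linear connection on $\cD\to Q$, which lets you work directly with a merely continuous $Y$, whereas the paper first replaces $Y$ by a smooth approximation and then invokes the existence of a (horizontal) lift.
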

    
    \begin{Rem}\label{rem:existence-of-transverse-vector-field}
    Note that a vector field $Y$ which is nowhere $\cD$-valued is, in particular, nowhere zero.
    Thus, in particular, if $Q$ is compact (and boundaryless, which we are assuming), then the existence of such a $Y$ implies that $\chi(Q)=0$.
    On the other hand, such a $Y$ exists if and only if the quotient vector bundle $\T Q/\cD$ admits a nowhere-zero section.
    Such a section exists if, for example, $Q$ is a  contractible space like $\R^n$ (since every vector bundle over a contractible base is trivial, i.e., isomorphic to a product bundle), but there are also many examples with $Q$ not contractible (e.g., let $\cD$ be a left or right invariant distribution on a Lie group $Q$).
    \end{Rem}
    
    \begin{proof}
    If the Euler characteristic of $\att$ is not well-defined, then the first conclusion of Theorem~\ref{th:generalized-brockett} implies that $\att$ is not stabilizable.
    Similarly, if the Euler characteristic of $S$ is not well-defined, then the first conclusion of Theorem~\ref{th:safety} implies that $S$ is not savable.
    Thus, we henceforth assume that $\chi(\att)$ and $\chi(S)$ are both well-defined and nonzero.

    By approximating $Y$ with a smooth vector field if necessary, we may assume that $Y$ is smooth \cite[Ch.~2.2]{hirsch1976differential}.
    Let the vector field $\tilde{Y}\colon \cD\to \T \cD$ be any lift of $Y$ to $\cD$ satisfying $\D \pi_{\T Q} \circ \tilde{Y} = Y\circ \pi_{\T Q}$ \cite[p.~202, Ex.~8-18]{lee2013smooth}.
    Defining $Y_\epsilon\coloneqq \epsilon Y$ and $\tilde{Y}_\epsilon\coloneqq \epsilon \tilde{Y}$ for $\epsilon > 0$,  $\tilde{Y}_\epsilon $ is the horizontal lift of $Y_\epsilon$ by linearity of horizontal lifts, so 
    \begin{equation}\label{eq:general-second-order-cs-on-bundle-theorem-1}
    \D \pi_{\T Q} \circ \tilde{Y}_\epsilon = Y_\epsilon\circ \pi_{\T Q}
    \end{equation}
    for all $\epsilon > 0$.
    On the other hand, since $(\ctrl,\cD,p,f)$ is second-order,
    \begin{equation}\label{eq:general-second-order-cs-on-bundle-theorem-2}
    \D \pi_{\T Q} \circ f = \iota \circ p.
    \end{equation}
    Since $Y_\epsilon$ is nowhere $\cD$-valued, the image of the right side of \eqref{eq:general-second-order-cs-on-bundle-theorem-1} is contained in $\T Q \setminus \cD$.
    On the other hand, the image of the right side of \eqref{eq:general-second-order-cs-on-bundle-theorem-2} is contained in $\cD$.
    Thus, examination of the left sides of \eqref{eq:general-second-order-cs-on-bundle-theorem-1} and \eqref{eq:general-second-order-cs-on-bundle-theorem-2} reveals that  $\tilde{Y}_\epsilon(\cD) \cap f(\ctrl) = \varnothing$. 
    Since $\tilde{Y}_\epsilon \to 0$ uniformly on compact subsets of $\cD$ as $\epsilon \to 0$, considering the adversaries $X = \tilde{Y}_\epsilon$ in the final statements of Theorems \ref{th:generalized-brockett} and \ref{th:safety} implies that $\att$ is not stabilizable and $S$ is not savable. 
    \end{proof}
    We now turn to nonholonomically constrained Lagrangian systems in a fairly general setting, although we do not strive for ultimate generality.
    Let $Q$ be a smooth manifold (the configuration space of a mechanical system, for example), let $L\colon \T Q\to \R$ be a smooth \concept{Lagrangian}, and let $\pi_{\T^* Q}\colon \T^* Q\to Q$ be the cotangent bundle projection.
    Following \cite[Def.~3.5.2]{abraham1987foundations}, the \concept{fiber derivative} $\F L\colon \T Q \to \T^* Q$ is a fiber-preserving map given in local coordinates by
    \begin{equation*}
    \F L\left(\sum_{i}\dot{q}^i \frac{\partial}{\partial q^i}\right)= \sum_{i}\frac{\partial L}{\partial \dot{q}^i}dq^i.
    \end{equation*}
    A Lagrangian is \concept{regular} if $\F L$ is a local diffeomorphism \cite[Def.~3.5.8]{abraham1987foundations}; in local coordinates, this means that the matrix $\frac{\partial^2 L}{\partial \dot{q}^i \partial \dot{q}^j}$ is everywhere invertible.

    Let $\cD\subset \T Q$ be a smooth vector subbundle representing the nonholonomic constraint $\dot{q}\in \cD$.
    Let $\ctrl$ be a smooth manifold, $p\colon \ctrl \to \cD$ be a surjective locally Lipschitz map, and let $F\colon \ctrl \to \T^*Q$ be a locally Lipschitz \concept{control force} satisfying $\pi_{\T^* Q} \circ F = \pi_{\T Q} \circ p$; this simply means that $F$ assigns to each $u\in \ctrl$ a covector in $\T^*_{q} Q$ over the same basepoint $q = \pi_{\T Q}(p(u))\in Q$ as that of $p(u)\in \cD_q$.
    Assuming that the constraint forces do no work, trajectories $q(t)$ of the system satisfy, in local coordinates, the \concept{Lagrange-d'Alembert equations} \cite[Eq.~1.3.6]{bloch2015nonholonomic} and constraint equations
    \begin{equation}\label{eq:nonhol-local}
    \frac{d}{dt}\frac{\partial L}{\partial \dot{q}^i}-\frac{\partial L}{\partial q^i} = \sum_{j}\lambda_j a^j_i + F_i \qquad \sum_j a^i_j(q) \dot{q}^j = 0,
    \end{equation}
    where $a^i = \sum_j  a^i_j(q) dq^j$ are locally defined $1$-forms satisfying $\mathcal{D} = \bigcap_i \ker a^i$ and the $\lambda_j(t)$ are \concept{Lagrange multipliers} determined to enforce satisfaction of the constraint $\dot{q}\in \cD$.
     
    Alternatively, we can write \eqref{eq:nonhol-local} in a global form.
    Let $\omega$ be the canonical symplectic form on $\T^* Q$, $\omega_L \coloneqq (\F L)^*\omega$ be the \concept{Lagrange $2$-form} \cite[Def.~3.5.5]{abraham1987foundations} on $\T Q$, $i_X \omega_L$ be the interior product of a vector field $X$ on $\T Q$ with $\omega_L$, and $E_L \coloneqq (\ip{\F L(v)}{v}-L) \colon \T Q\to \R$ the \concept{energy}.
    Then \eqref{eq:nonhol-local} is equivalent to (see
    \cite[Def.~3.5.11, Thm~3.5.17]{abraham1987foundations} or \cite[Eq.~3.4.6]{bloch2015nonholonomic}): 
    \begin{equation}\label{eq:nonhol-global}
    (dE_L - i_X \omega_L)_v \in (\D_v \pi_{\T Q})^*F + (\D_v \pi_{\T Q})^*\cD^0_{\pi_{\T Q}(v)}, \qquad v\in \cD,
    \end{equation}
    where\footnote{Here, note that $\dot{q}$ and $\ddot{q}$ are not local coordinate notations: $t\mapsto q(t)$ is a curve in $Q$, so $t\mapsto \frac{d}{dt}q(t)\eqqcolon \dot{q}(t)$ is a curve in $\T Q$, and thus $t\mapsto \frac{d}{dt}\dot{q}(t)\eqqcolon \ddot{q}(t)$ is a curve in $\T(\T Q)$.
    Thus, $\dot{q}\in \T Q$ contains both position and velocity information and $\ddot{q}\in \T(\T Q)$ contains position, velocity, and acceleration information.\label{foot:not-local-coord}} $\dot{q} = v\in \cD\subset \T Q$, $\ddot{q} = X\in \T \cD \subset \T(\T Q)$, and $\cD^0\subset \T^* Q$ is the annihilator of $\cD$.\footnote{A similar global formulation, but without an external force, can be found in \cite[Eq.~3.12]{monforte2004geometric}.
    However, the left side $(dE_L - i_X \omega_L)$ of \eqref{eq:nonhol-global} differs from that of \cite[Eq.~3.12]{monforte2004geometric} by a minus sign, which is necessary for the left side of \eqref{eq:nonhol-global} to coincide with that of \eqref{eq:nonhol-local} in local coordinates; cf. \cite[Eq.~3.4.6]{bloch2015nonholonomic}.}
    
    
    \begin{Prop}\label{prop:nonhol-lagrangian}
    Let $L\colon \T Q \to \R$ be a smooth Lagrangian which is regular, and let $\cD\subset \T Q$ be a smooth vector subbundle.
    \begin{enumerate}
    \item\label{item:nonhol-lagrangian-1}    Then \eqref{eq:nonhol-global} (or, equivalently, \eqref{eq:nonhol-local}) defines a second order control system $(\ctrl, \cD, p, f)$ such that a curve $q(t)$ satisfies \eqref{eq:nonhol-global} if and only if $\ddot{q}(t) = X =  f$,  and $f\colon \ctrl \to \T \cD$ is locally Lipschitz. 
    \item\label{item:nonhol-lagrangian-2} 
    Let $\att \subset \cD$ be a compact subset and $S\subset \cD$ be a precompact subset.
    Assume there exists a continuous vector field $Y$ on $Q$ which is nowhere $\cD$-valued. 
    Then:  
\begin{itemize}
\item if either (i) the Euler characteristic $\chi(\att)$ is not well-defined according to Def.~\ref{def:euler-cech}, or if (ii) $\chi(\att)$ is well-defined and $\chi(\att)\neq 0$, then $\att$ cannot be rendered asymptotically stable by any locally Lipschitz control force $F\colon \ctrl \to \T^* Q$ in \eqref{eq:nonhol-global}; and 
\item  if either (i) the Euler characteristic $\chi(S)$ is not well-defined according to Def.~\ref{def:euler-cech}, or if (ii) $\chi(S)$ is well-defined and $\chi(S)\neq 0$, then $S$ cannot be rendered strictly positively invariant (Def.~\ref{def:strictly-inflowing}; cf. Def.~\ref{def:safety}) by any locally Lipschitz control force $F\colon \ctrl \to \T^* Q$ in \eqref{eq:nonhol-global}.
\end{itemize}
    \end{enumerate}
    \end{Prop}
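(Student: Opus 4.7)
The plan is to obtain Part~(\ref{item:nonhol-lagrangian-1}) by explicitly solving \eqref{eq:nonhol-global} for a locally Lipschitz second-order vector field $f$ on $\cD$, and then to read off Part~(\ref{item:nonhol-lagrangian-2}) as an immediate corollary of Prop.~\ref{prop:general-second-order-cs-on-bundle-theorem}.

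For Part~(\ref{item:nonhol-lagrangian-1}), I would fix $u \in \ctrl$, set $v = p(u) \in \cD$ and $q = \pi_{\T Q}(v) \in Q$, and work in a local chart $(q^i)$ on $Q$ together with a local coframe $(a^j)_{j=1}^{\dim Q - \rank \cD}$ for the annihilator $\cD^0$, writing $X = A^i \partial_{q^i} + B^i \partial_{\dot q^i} \in T_v(\T Q)$. Using $p_i = \partial L/\partial \dot q^i$ and $\omega_L = dq^i \wedge dp_i$, the $d\dot q^i$-components of \eqref{eq:nonhol-global} vanish on the right (since $(\D\pi_{\T Q})^*$ produces only pullbacks of $dq^i$'s) while on the left they reduce to $M_{ij}(\dot q^j - A^j) = 0$, where $M_{ij} = \partial^2 L/\partial \dot q^i \partial \dot q^j$ is invertible by regularity of $L$. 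Hence $A^i = \dot q^i$, which is exactly the second-order property $\D\pi_{\T Q} \circ f = \iota \circ p$ of \eqref{eq:second-order}.

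The $dq^i$-components of \eqref{eq:nonhol-global} then reduce to the classical nonholonomic Lagrange system $M_{ij} B^j = \Xi_i(q, \dot q) + F_i(u) + \lambda_j a^j_i$, with $\Xi$ collecting the remaining smooth Lagrangian terms. Imposing the tangency $X \in T_v \cD$ by differentiating the constraint $a^j_i \dot q^i = 0$ along $X$ yields $a^j_i B^i + (\partial a^j_i/\partial q^k)\dot q^k \dot q^i = 0$; substituting $B = M^{-1}(\Xi + F(u) + \lambda_k a^k)$ reduces this to a linear system for $\lambda$ whose coefficient matrix $(a^j M^{-1} (a^k)^\top)_{j,k}$ is the classical nonholonomic Gram matrix, invertible under our hypotheses (and positive-definite whenever the kinetic part of $L$ is). Uniqueness of $\lambda$ yields uniqueness of $B$, and setting $f(u) := \dot q^i \partial_{q^i} + B^i \partial_{\dot q^i}$ gives a well-defined $f \colon \ctrl \to \T\cD$; local Lipschitzness of $f$ follows from smoothness of $L$ and $\cD$ together with local Lipschitzness of $F$ and $p$.

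Part~(\ref{item:nonhol-lagrangian-2}) is then immediate: Part~(\ref{item:nonhol-lagrangian-1}) exhibits $(\ctrl, \cD, p, f)$ as a second-order control system, and the hypothesized continuous vector field $Y$ nowhere tangent to $\cD$ (which by Rem.~\ref{rem:existence-of-transverse-vector-field} forces $\rank \cD < \dim Q$) supplies the last remaining hypothesis of Prop.~\ref{prop:general-second-order-cs-on-bundle-theorem}; its Euler-characteristic obstructions translate directly to the stated claims about $\att$ and $S$, noting that ``$S$ cannot be rendered safe'' per Def.~\ref{def:safety} is exactly the statement that no control law giving a continuous uniquely integrable closed-loop renders $S$ strictly positively invariant (Def.~\ref{def:strictly-inflowing}). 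The principal obstacle is the uniqueness-of-$\lambda$ step inside Part~(\ref{item:nonhol-lagrangian-1}): regularity of $L$ yields $A = \dot q$ painlessly and local Lipschitzness of $f$ is routine, but verifying invertibility of $a M^{-1} a^\top$ --- equivalently, nondegeneracy of $\omega_L$ on $T\cD$ modulo the constraint-force subspace --- is the genuine analytic content, and must be extracted from positive-definiteness of the kinetic part of $L$ (when present) or from an equivalent standard nondegeneracy hypothesis for the regular nonholonomic system.
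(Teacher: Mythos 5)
Your proposal is correct in substance and reaches the result by a genuinely different route from the paper, so a comparison is in order. The paper never introduces Lagrange multipliers: using regularity of $L$ it treats $\omega_L$ as a symplectic form, asserts the splitting $\T_v(\T Q)=\T_v\cD\oplus E_v$ with $E_v=\omega_L^\sharp(\D_v\pi_{\T Q})^*\cD^0_{\pi_{\T Q}(v)}$ (Eq.~\eqref{eq:ttqd-splitting-1}), and then defines $f$ as the $\T\cD$-component of $\omega_L^\sharp dE_L-\omega_L^\sharp(\D\pi_{\T Q})^*F$ under that direct sum, so that the multipliers are absorbed into the linear projection $\Pi$; the second-order property is read off from the fact that this expression is a second-order vector plus a vertical vector, which is the global counterpart of your coordinate computation $A^i=\dot q^i$. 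You instead solve for $\lambda$ explicitly, which requires inverting the Gram matrix $aM^{-1}a^{\top}$. These are \emph{exactly} the same condition: in coordinates $E_v$ consists of the vertical vectors $M^{-1}\alpha$ with $\alpha\in\cD^0_q$, and the vertical part of $\T_v\cD$ is $\cD_q$ itself, so the splitting holds if and only if $M^{-1}(\cD^0_q)\cap\cD_q=0$, i.e.\ if and only if $aM^{-1}a^{\top}$ is invertible. Thus the ``genuine analytic content'' you flag is precisely the nondegeneracy the paper needs for \eqref{eq:ttqd-splitting-1}, where it is asserted to follow from regularity via ``a local coordinate computation''; as you correctly sense, for an arbitrary regular but indefinite Lagrangian this is not automatic (the Hessian restricted to $\cD$ can be degenerate even when the full Hessian is invertible), though it holds for the positive-definite kinetic-energy Lagrangians of the paper's examples. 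Your candor about this point is a virtue, not a gap relative to the paper's own argument. Two small additions would complete the write-up: in Part~2, invoke the Picard--Lindel\"of theorem to pass from ``locally Lipschitz closed-loop vector field'' to the unique integrability required by Definitions~\ref{def:stabilizable} and \ref{def:safety}, as the paper does explicitly; and note that local Lipschitzness of $f$ in your construction uses that $M^{-1}$ and the solved-for $\lambda$ depend smoothly on $(q,\dot q)$ and Lipschitz-continuously on $u$ through $F$ and $p$.
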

    \begin{Rem}
    With only minor changes, a more general result can be formulated assuming only that the force law produces a closed-loop vector field $X$ which is continuous and uniquely integrable. 
    However, the present formulation of Prop.~\ref{prop:nonhol-lagrangian} has the virtue of giving sufficient conditions under which $f\colon \ctrl \to \T\cD$ is locally Lipschitz, so that closed-loop vector fields $X$ produced by locally Lipschitz forces $F$ are locally Lipschitz and thus uniquely integrable. 
    \end{Rem}
    \begin{Rem}\label{rem:global-pfaffian-constraint}
    The nonholonomic distribution $\cD$ is \emph{locally} defined by the \concept{Pfaffian constraints} $\sum_j a^i_j(q) \dot{q}^j = 0$ of \eqref{eq:nonhol-local}, where the $a^i$ are locally-defined $1$-forms.
    Suppose that at least one of these $1$-forms can be \emph{globally} defined, i.e., suppose there exists a continuous $1$-form $\alpha$ on $Q$ with $\cD \subset \ker \alpha$.
    Then if the vector field $Y$ on $Q$ is the dual of $\alpha$ with respect to any Riemannian metric, $Y$ is nowhere $\cD$-valued, so the hypothesis in item \ref{item:nonhol-lagrangian-2} of Prop.~\ref{prop:nonhol-lagrangian} is satisfied.
    Since nonholonomic distributions are often described by Pfaffian constraints in practice, this provides a (often easy) means for verifying this hypothesis in examples. 
    Conversely, if there exists a continuous vector field $Y$ defined on all of $Q$ which is nowhere $\cD$-valued, then the metric dual of the orthogonal projection of $Y$ onto the orthogonal complement of $\cD$ (with respect to any Riemannian metric) is a continuous $1$-form $\alpha$ satisfying $\cD \subset \ker \alpha$.
    Thus, the existence of a continuous $Y$ which is nowhere $\cD$-valued is equivalent to the existence of a continuous $\alpha$ satisfying $\cD \subset \ker \alpha$; see Rem.~\ref{rem:existence-of-transverse-vector-field} for more discussion.
    \end{Rem}
    
    \begin{Rem}\label{rem:eq-submanifold}
    In \cite[Sec.~5]{bloch1992control}, sufficient conditions are given under which a smooth \concept{equilibrium submanifold} $N\subset 0_{\cD}\subset \cD$ of the zero section of $\cD$ can be stabilized for a certain broad subclass of nonholonomically constrained Lagrangian systems encompassed by \eqref{eq:nonhol-local} (or \eqref{eq:nonhol-global}).
    For this subclass, these sufficient conditions essentially amount to assuming that (i) the configuration space factors as a product $Q = Q_1 \times Q_2$ of smooth manifolds (without boundary), so that the configuration decomposes as $q = (q_1, q_2)$, and (ii) the determinants of a certain pair of matrix-valued functions are nowhere vanishing \cite[Eq.~15]{bloch1992control}.
    Restated in geometric language, the first determinant being nowhere vanishing is equivalent (by the implicit function theorem) to the existence of a smooth map $h\colon Q_2\to Q_1$ such that the equilibrium manifold
    \begin{equation}\label{eq:brm-assumption-1}
    N = \{(q_1, q_2)\colon q_1 = h(q_2)\}\eqqcolon \textnormal{graph}(h)
    \end{equation}
    is the graph of $h$ (identifying $N\subset 0_{\cD} \approx Q$ with a submanifold of $Q$ in the standard way), and the second determinant being nowhere vanishing is equivalent to the statement that
    \begin{equation}\label{eq:brm-assumption-2}
    \T Q|_N = \T N \oplus \cD|_{N}
    \end{equation}
    splits as the indicated direct (Whitney) sum.
    Under these and mild additional sufficient conditions, \cite[Thm~3]{bloch1992control} guarantees that $N$ can be rendered asymptotically stable by substituting a rather explicit smooth control force field $\T Q \to \T^* Q$ for $F$ in \eqref{eq:nonhol-local} (or \eqref{eq:nonhol-global}).
    As we now explain, this is entirely consistent with the non-stabilizability claim in item \ref{item:nonhol-lagrangian-2} of  Prop.~\ref{prop:nonhol-lagrangian} (with $\att = N$).
    First note \eqref{eq:brm-assumption-1} implies that $N$ is diffeomorphic to $Q_2$, so item \ref{item:nonhol-lagrangian-2} of Prop.~\ref{prop:nonhol-lagrangian} gives no information concerning stabilizability of $\att = N$ if either $Q_2$ is noncompact or if $Q_2$ is compact with $\chi(Q_2)=0$.
    On the other hand, it is easy to construct examples satisfying the sufficient conditions of \cite[Thm~3]{bloch1992control} regardless of the topology of $Q_2$, so one might worry that this contradicts the non-stabilizability claim in item \ref{item:nonhol-lagrangian-2} of Prop.~\ref{prop:nonhol-lagrangian} if $Q_2$ is compact and $\chi(Q_2)\neq 0$.
    However, there is no contradiction because Prop.~\ref{prop:nonhol-lagrangian} assumes the existence of a continuous vector field $Y$ which is nowhere $\cD$-valued, and such a $Y$ cannot exist if $Q_2$ is compact with $\chi(Q_2)\neq 0$.
    Indeed, if such a $Y$ existed then the image of $Y|_N$ under the linear projection $\T Q|_N = \T N \oplus \cD|_N \to \T N$ would yield a nowhere-vanishing vector field on $N \approx Q_2$, contradicting the Poincar\'{e}-Hopf theorem (Lem.~\ref{lem:poincare-hopf}) in light of the assumption that $\chi(N)=\chi(Q_2)\neq 0$. 
    Thus, Prop.~\ref{prop:nonhol-lagrangian} is in harmony with \cite[Thm~3]{bloch1992control}.
    However, Prop.~\ref{prop:nonhol-lagrangian} can still be used to obtain striking conclusions concerning concrete examples of nonholonomic systems such as the ``vertical rolling disk'' treated in Ex.~\ref{ex:vert-rolling-disk} below.
    \end{Rem}
    
    \begin{proof}[Proof of Prop.~\ref{prop:nonhol-lagrangian}]
    The claims in item \ref{item:nonhol-lagrangian-2} of the proposition follows directly from item \ref{item:nonhol-lagrangian-1}, Prop.~\ref{prop:general-second-order-cs-on-bundle-theorem}, and the Picard-Lindel\"{o}f theorem, so we need only prove the claim in item \ref{item:nonhol-lagrangian-1}.

    Define $\omega_L^\flat Y \coloneqq  i_Y\omega_L$ for a vector $Y$ and define the vector bundle isomorphism $\omega_L^\sharp \colon \T^*(\T Q) \to \T(\T Q)$ to be the inverse of $\omega_L^\flat \colon \T(\T Q)\to \T^*(\T Q)$; that $\omega_L^\flat$ is an isomorphism follows since the regularity of $L$ implies that $\omega_L$ is symplectic (nondegenerate) \cite[Prop.~3.5.9]{abraham1987foundations}. 
    Using the fact that $L$ is regular, a local coordinate computation using \cite[Prop.~3.5.6]{abraham1987foundations} reveals that  
    \begin{equation}\label{eq:ttqd-splitting-1}
    \forall v\in \cD\colon \T_v(TQ) = \T_v \cD \oplus \underbrace{\omega_L^\sharp (\D_v \pi_{\T Q})^*(\cD^0_{\pi_{\T Q}(v)})}_{E_v},
    \end{equation}
    so
    \begin{equation}\label{eq:ttqd-splitting-2}
    \T(\T Q)|_{\cD} = \T \cD \oplus E|_{\cD},
    \end{equation}
   where $E\subset \T(\T Q)$ is defined by \eqref{eq:ttqd-splitting-1}.
   Given any $Y\in \T(\T Q)|_{\cD}$, we uniquely decompose $Y$ as $Y = Y_{\T \cD} + Y_E$ according to the splitting \eqref{eq:ttqd-splitting-2}.
   Defining $G\coloneqq \omega_L^\sharp dE_L$ and $\widetilde{F}$ by $\widetilde{F}_v\coloneqq \omega_L^\sharp (\D_v \pi_{\T Q})^*F_v$ for $v\in \cD$, we may thus rewrite \eqref{eq:nonhol-global} as
   \begin{equation}\label{eq:nonhol-global-sharp-decomp}
   X = X_{\T \cD} \in G_{\T \cD} + G_{E} - \widetilde{F}_{\T \cD} - \widetilde{F}_E + E. 
   \end{equation}
   Because \eqref{eq:ttqd-splitting-2} is a direct sum, \eqref{eq:nonhol-global-sharp-decomp} implies that 
   \begin{equation}\label{eq:x-decomp-eliminate}
   X =  G_{\T \cD} - \widetilde{F}_{\T \cD}.
   \end{equation}
   If we define the linear projection $\Pi\colon \T\cD \oplus E|_{\cD} \to \T \cD$ onto the first factor, we can write \eqref{eq:x-decomp-eliminate} more explicitly as
  \begin{equation}\label{eq:x-decomp-eliminate-explicit}
  \ddot{q} = X_{\dot{q}} = \Pi((\omega_L^\sharp dE_L)_{\dot{q}} - \omega_L^\sharp (\D_{\dot{q}} \pi_{\T Q})^*F ), \qquad \dot{q} \in \cD.
  \end{equation} 
  This expression is smooth in $\dot{q}$ and $F$ since $\Pi$, $\omega_L$, $E_L$, and $\pi_{\T Q}$ are smooth.
  Since the map $F\colon \ctrl \to \T^* Q$ is assumed to be locally Lipschitz, so is the map $f\colon \ctrl \to \T(\T Q)$ defined by the right side of \eqref{eq:x-decomp-eliminate-explicit} (to define this map, substitute $\dot{q} = p(u)$ so that $\pi_{\T(\T Q)}\circ f = \iota \circ p$, where $\iota \colon \cD \hookrightarrow \T Q$ is the inclusion map).
  That the control system $(\ctrl, \T Q, p, f)$ is second order follows since the right side of \eqref{eq:x-decomp-eliminate-explicit} is the sum of the second order vector $\Pi((\omega_L^\sharp dE_L)_{\dot{q}})$ with the vertical vector $-\Pi(\omega_L^\sharp (\D_{\dot{q}} \pi_{\T Q})^*F_{\dot{q}} )$, so $\D \pi_{\T(\T Q)}\circ f = p$.
  This completes the proof. 
  \end{proof}
  
  \begin{Ex}[Vertical rolling disk]\label{ex:vert-rolling-disk}
  To illustrate Prop.~\ref{prop:nonhol-lagrangian} we consider the controlled \concept{vertical rolling disk} using steering and driving torque inputs, following \cite[Sec.~1.4]{bloch2015nonholonomic}.
  The configuration space $Q$ for this system is $Q = \SE(2) \times \sph^1$, with (``generalized'') coordinates $q = (x,y,\varphi,\theta)$.
  The Lagrangian for this system is equal to its total kinetic energy, namely:\footnote{For notational simplicity, in this example we view $\dot{q}\in \R^4$ and write $(q,\dot{q})$ in lieu of the more global notation explained in Footnote~\ref{foot:not-local-coord} and used earlier in this section.}
  \begin{equation}\label{eq:ver-rolling-disk-lagrangian}
  L(q,\dot{q}) = L(x,y,\varphi,\theta,\dot{x},\dot{y},\dot{\varphi},\dot{\theta}) = \frac{1}{2}m(\dot{x}^2 + \dot{y}^2) + \frac{1}{2}I\dot{\theta}^2 + \frac{1}{2} J \dot{\varphi}^2,
  \end{equation}
  where $m>0$ is the mass of the disk, $I>0$ is the moment of inertia of the disk about the axis perpendicular to the plane of the disk, and $J$ is the moment of inertia about an axis in the plane of the disk (both axes passing through the disk’s center) \cite[Eq.~1.4.1]{bloch2015nonholonomic}.
  If $R>0$ is the radius of the disk, the nonholonomic constraints of rolling without slipping are:
  \begin{equation}\label{eq:ver-rolling-disk-constraints}
  \begin{split}
  a^1\cdot (\dot{x},\dot{y},\dot{\varphi},\dot{\theta}) \coloneqq \dot{x} - R(\cos \varphi)\dot{\theta} &=0\\
   a^2\cdot (\dot{x},\dot{y},\dot{\varphi},\dot{\theta}) \coloneqq \dot{y} - R(\sin \varphi)\dot{\theta}&=0,
  \end{split}
  \end{equation}
  which state that a point $P_0$ fixed on the rim of the disk has zero velocity at its point of contact with the horizontal plane \cite[Eq.~1.4.2]{bloch2015nonholonomic}.
  Assuming we have controls in the directions of the two angles $\varphi$ and $\theta$, the Lagrange-d'Alembert equations \eqref{eq:nonhol-local} in the present case are:
  \begin{equation*}
  \frac{d}{dt}\frac{\partial L}{\partial \dot{q}} = u_\varphi g^\varphi + u_\theta g^\theta + \lambda_1 a^1 + \lambda_2 a^2,
  \end{equation*}
  where $g^\varphi = (0,0,1,0)$ and $g^\theta = (0,0,0,1)$ \cite[Eq.~1.4.3]{bloch2015nonholonomic}.
  Here $u_\varphi$ and $u_\theta$ are control inputs, so the control force of \eqref{eq:nonhol-local} and \eqref{eq:nonhol-global} is $F = u_\varphi g^\varphi + u_\theta g^\theta$, and the $\lambda_i$ are Lagrange multipliers chosen to ensure satisfaction of the constraints \eqref{eq:ver-rolling-disk-constraints}.

  The set $\cD$ of points $(x,y,\varphi,\theta,\dot{x},\dot{y},\dot{\varphi},\dot{\theta})$ satisfying \eqref{eq:ver-rolling-disk-constraints} is a smooth vector subbundle $\cD\subset \T (\SE(2) \times \sph^1)$, and the smooth vector field
  \begin{equation*}
  Y\coloneqq \frac{\partial}{\partial x}
  \end{equation*}
  is nowhere $\cD$-valued.
  Moreover, the Lagrangian \eqref{eq:ver-rolling-disk-lagrangian} is regular since the matrix
  \begin{equation*}
  \frac{\partial^2 L}{\partial \dot{q}\partial \dot{q}} = \begin{bmatrix}
  m & 0 & 0 & 0 \\
  0 & m & 0 & 0 \\
  0 & 0 & J & 0\\
  0 & 0 & 0 & I
  \end{bmatrix}
  \end{equation*}
  is invertible.
  Thus, if $\att\subset \cD$ is any compact subset having a well-defined (according to Def.~\ref{def:euler-cech}) and nonzero Euler characteristic $\chi(\att)$, then Prop.~\ref{prop:nonhol-lagrangian} implies that $\att$ cannot be made asymptotically stable for the closed-loop system determined by any locally Lipschitz control law $(q,\dot{q})\mapsto u_{\varphi}(q,\dot{q}),u_{\theta}(q,\dot{q})$.
  In particular, if $\att\subset \cD$ is a $2$-dimensional compact submanifold (without boundary), $\att$ cannot be made asymptotically stable by such feedback if $\att$ is not homeomorphic to either a $2$-torus or a Klein bottle.\footnote{This is because the only compact connected $2$-dimensional manifolds (without boundary) with zero Euler characteristic are the $2$-torus and the Klein bottle (see Lem.~\ref{lem:2d-manifolds-euler} in App.~\ref{app:low-dim-mfld-euler-zero}).}
  On the other hand, \cite[Prop.~2]{bloch1992control} gives a sufficient condition under which a $2$-dimensional equilibrium submanifold $N\subset 0_{\cD}\subset \cD$ (compact or not) can be made asymptotically stable for the vertical rolling disk; the preceding sentence implies that, if $N$ is compact, $N$ cannot satisfy these conditions if $N$ is not a torus or a Klein bottle (cf. Rem.~\ref{rem:eq-submanifold}). 

  Finally, if $S\subset \cD$ is any precompact subset having a well-defined and nonzero Euler characteristic, then the preceding considerations and Prop.~\ref{prop:nonhol-lagrangian} imply that $S$ cannot be rendered strictly positively invariant (Def.~\ref{def:strictly-inflowing}; cf. Def.~\ref{def:safety}) for the closed-loop system determined by any locally Lipschitz control law $(q,\dot{q})\mapsto u_{\varphi}(q,\dot{q}),u_{\theta}(q,\dot{q})$.
  \end{Ex}

\section{Comparison with selected point stabilization results} \label{sec:compare}  
Our motivation for Theorem~\ref{th:generalized-brockett} was to introduce a stabilizability test for compact subsets which are more general than single points; our motivation was not to sharpen existing stabilizability tests for single points. 
However, for completeness, in this section we compare  Theorem~\ref{th:generalized-brockett} with \cite[Thm~1.(iii)]{brockett1983asymptotic} and a weakened version of \cite[Thm~2]{coron1990necessary} in specialized settings in which (in particular) $\att = \{x_0\}$ is a single point.

The examples we present in this section are trivial from the perspective of control, since they essentially merely concern vector fields (control systems without control), but they are nonetheless adequate to compare the relative strengths of the three mentioned results.

\subsection{Comparison with Brockett's necessary condition}
In Rem.~\ref{rem:generalize-brockett} we explained that Theorem~\ref{th:generalized-brockett} is at least as strong as \cite[Thm~1.(iii)]{brockett1983asymptotic} in the special case that $\att = \{x_0\}$ is a point.
In this subsection we present an example in which a lack of stabilizability is detected by Theorem~\ref{th:generalized-brockett} but not by \cite[Thm~1.(iii)]{brockett1983asymptotic}.
Thus, Theorem~\ref{th:generalized-brockett} is \emph{strictly} stronger than \cite[Thm~1.(iii)]{brockett1983asymptotic} in the special case that $\att$ is a point.

\begin{Ex}\label{ex:strictly-stronger}
Consider the system of ordinary differential equations (Fig.~\ref{fig:ex-3-streams})
\begin{equation}\label{eq:ex-strictly-stronger-1}
\begin{split}
\dot{x} &= x^2-y^2\\
\dot{y} &=4xy^2
\end{split}
\end{equation}  
on $\R^2$.\footnote{Alternatively, in terms of complex numbers $z=x+iy\in \C$, with $i=\sqrt{-1}$, the right side of \eqref{eq:ex-strictly-stronger-1} can be written as $f(z) = \left(\frac{1}{2}+y\right)z^2 + \left(\frac{1}{2}-y\right)\bar{z}^2$, where $\bar{z}=x-iy$.}
We can view the vector field $(x,y)\mapsto f(x,y)$ defined by  \eqref{eq:ex-strictly-stronger-1} as the trivial control system $(\ctrl, \st, p,f) = (\R^2,\R^2, \id_{\R^2}, f)$ ``without control'', where the notation is as in \eqref{eq:control-system}.
Here, however, we identify vector fields including $f\colon \R^2 \to \T \R^2$ with maps $\R^2\to \R^2$ using the canonical identification $\T \R^2 \approx \R^2 \times \R^2$.
The origin of $\R^2$ is not asymptotically stable (hence not stabilizable) for $f$ since, e.g., it has index $0 \neq  1$.\footnote{For the standard definition of the index of an isolated equilibrium point see, e.g., \cite[p.~32]{milnor1965topology} or  \cite[p.~133]{guillemin1974differential}.}
We will show that (i) this is detected by Theorem~\ref{th:generalized-brockett}, but (ii) it is not possible to detect this using only \cite[Thm~1.(iii)]{brockett1983asymptotic}.

We begin with the latter claim.
Consider the equations
\begin{equation}\label{eq:ex-strictly-stronger-2}
\begin{split}
\delta &= x^2-y^2\\
\eta &=4xy^2
\end{split}
\end{equation}  
for any constant vector $(\delta, \eta)\in \R^2$.
From the first equation of \eqref{eq:ex-strictly-stronger-2}, $y^2 = x^2-\delta$; substituting this into the second equation yields $\eta = 4x(x^2-\delta)  = 4x^3-4\delta x$.
When $\eta = 0$, the solutions to the latter equation are $0,\pm \sqrt{|\delta|}$; because the function $x\mapsto 4x^3-4\delta x$ is increasing when $|x|>\sqrt{|\delta|}$, it follows that the latter equation always has a solution $x_*(\delta,\eta)$ satisfying $|x_*|> \sqrt{|\delta|}$ and $\lim_{(\delta,\eta)\to 0}x_*(\delta,\eta)=0$.
Since $x_*^2-\delta \geq 0$, it follows that $(x_*,\sqrt{x_*^2-\delta})$ is a solution to \eqref{eq:ex-strictly-stronger-2}.
Since $\delta, \eta$ can be made arbitrarily small, we see that Brockett's necessary condition is satisfied; thus, it is not possible to deduce from \cite[Thm~1.(iii)]{brockett1983asymptotic} that the origin is not stabilizable.

We now show that it \emph{is} possible to deduce that the origin is not stabilizable from Theorem~\ref{th:generalized-brockett}.
Consider, for $\epsilon > 0$, the equations 
\begin{equation}\label{eq:ex-strictly-stronger-3}
\begin{split}
\epsilon &= x^2-y^2\\
-\epsilon x&=4xy^2,
\end{split}
\end{equation}  
which correspond to the adversary $X_\epsilon= \epsilon(\frac{\partial}{\partial x} - x\frac{\partial}{\partial y})$ in the context of Theorem~\ref{th:generalized-brockett}.
\begin{figure}
	\centering
	\includegraphics[width=1.0\linewidth]{./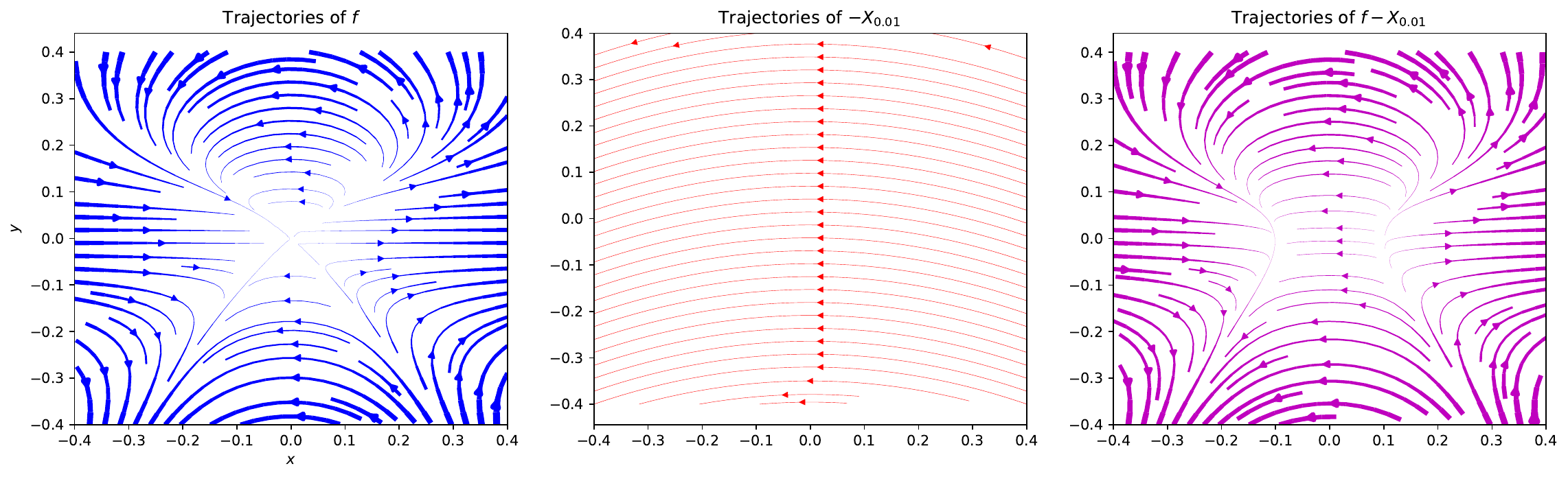}\caption{This figure illustrates Ex.~\ref{ex:strictly-stronger}.
	Shown are numerical approximations of trajectory segments of the vector fields: $f$ defined by \eqref{eq:ex-strictly-stronger-1} (left);  $-X_{\epsilon}$, with $\epsilon = 0.01$, defined by the left side of \eqref{eq:ex-strictly-stronger-3} (middle); and $f-X_{0.01}$ (right), illustrating the fact that $f-X_{\epsilon}$ has no equilibria for any $\epsilon > 0$. 
	For visualization purposes, the width of each trajectory is proportional to the norm of its instantaneous velocity, and the same proportionality constant was used in all three plots.
	Each plot was generated using the  function \texttt{streamplot} from the Python library matplotlib \cite{hunter2007matplotlib}.
	}\label{fig:ex-3-streams}
\end{figure}
From the second equation of \eqref{eq:ex-strictly-stronger-3}, 
\begin{equation}\label{eq:ex-strictly-stronger-4}
0 = x(4y^2+\epsilon).
\end{equation} 
Since the term in parentheses is strictly positive, \eqref{eq:ex-strictly-stronger-4} implies that $x = 0$.
Substituting this into the first equation of \eqref{eq:ex-strictly-stronger-3} yields $y^2 = -\epsilon$, which does not have a (real) solution.
Thus, \eqref{eq:ex-strictly-stronger-3} does not have a real solution $(x,y)$ for any $\epsilon > 0$.
Since $X_\epsilon\to 0$ as $\epsilon\to 0$ uniformly on any bounded neighborhood of the origin, Theorem~\ref{th:generalized-brockett} indeed implies that the origin is not stabilizable, as claimed. 
\end{Ex}        

\subsection{Comparison with Coron's necessary condition}\label{sec:coron}
In this subsection we first show (Prop.~\ref{prop:coron-compare}) that a weakened version of \cite[Thm~2]{coron1990necessary} is at least as strong as Theorem~\ref{th:generalized-brockett} in a special case that $\att$ is a point and the mild assumptions described in the paragraph below are satisfied.
We then present an example in which a lack of stabilizability of a point is detected by \cite[Thm~2]{coron1990necessary} but not by Theorem~\ref{th:generalized-brockett}.
Thus, the weakened version of \cite[Thm~2]{coron1990necessary} we present is \emph{strictly} stronger than Theorem~\ref{th:generalized-brockett} in the specialized setting of the present subsection.\footnote{Because Mansouri's theorems \cite[Thm~4]{mansouri2007local}, \cite[Thm~2.3]{mansouri2010topological}  for stabilizability of submanifolds of $\R^n$ generalize \cite[Thm~2]{coron1990necessary}, we also expect that the former theorems are strictly stronger than Theorem~\ref{th:generalized-brockett} in the corresponding specialized setting. 
We defer a careful comparison to future work.}

We begin by introducing the setting and some definitions from \cite{coron1990necessary} in order to state a somewhat weakened version of \cite[Thm~2]{coron1990necessary}.
Fix integers $n\geq 2$ and $m\geq 1$, let $\Omega\subset \R^n$ be an open neighborhood of $0$, and let $f\colon \Omega\times \R^m\to \R^n$ be a continuous map.
Denoting by $p\colon \Omega\times \R^m\to \Omega \subset \R^n$ the projection onto the first factor and identifying $f$ with a section of $\T \R^n \approx \R^n \times \R^n$, this defines a control system $(\Omega\times \R^m, \Omega, p, f)$ in the sense of \eqref{eq:control-system}. 
Given $\epsilon \in (0,\infty]$, define
\begin{equation}\label{eq:sigma-eps-def}
\Sigma_\epsilon \coloneqq \{(x,u)\in \Omega\times \R^m\colon f(x,u)\neq 0, \norm{x}< \epsilon\}.
\end{equation}
\begin{Rem}\label{rem:Sigma_eps}
Coron's definition of $\Sigma_\epsilon$ includes the additional stipulation that $\norm{u}< \epsilon$.
This is because Coron considers stabilizability of the origin for control systems $f$ satisfying $f(0,0)=0$ via control laws $x\mapsto u(x)$ satisfying $u(0) = 0$, a requirement which Coron can impose without loss of  generality in the setting of point stabilization.
However, it does not make sense to impose such a requirement in the more general context of our Theorem~\ref{th:generalized-brockett} (cf. \cite[p.~527]{mansouri2007local}).
For this reason, we have modified Coron's definition of $\Sigma_\epsilon$ (and also $\Omega$) in order to state a weakened version of \cite[Thm~2]{coron1990necessary} to facilitate comparison with Theorem~\ref{th:generalized-brockett}. 
\end{Rem}
Given an integer $k$, topological spaces $X$ and $Y$, and a continuous map $g\colon X\to Y$, $H_k(X)$ denotes the $k$-th singular homology of $X$ with coefficients in $\Z$ and $f_*\colon H_k(X)\to H_k(Y)$ denotes the induced homomorphism on homology.

\begin{\Thnon}[weakened version of {\cite[Thm~2]{coron1990necessary}}]
Let the control system $(\Omega\times \R^m, \Omega, p, f)$ and the set $\Sigma_\epsilon\subset \Omega \times \R^m\subset \R^n\times \R^m$ be as defined above.
Assume that the origin $\att = \{0\}$ is stabilizable (Def.~\ref{def:stabilizable}).
Then 
\begin{equation}\label{eq:weak-coron-condition}
\forall \epsilon \in (0,\infty]\colon f_*(H_{n-1}(\Sigma_\epsilon)) = H_{n-1}(\R^n\setminus \{0\}).
\end{equation}
\end{\Thnon}
We now show that this result is at least as strong as the corresponding specialization of Theorem~\ref{th:generalized-brockett}, in the sense that satisfaction of \eqref{eq:weak-coron-condition} implies satisfaction of the condition of Theorem~\ref{th:generalized-brockett} containing \eqref{eq:generalized-brockett-adversary} for the control system $(\Omega\times \R^m, \Omega, p, f)$.
When combined with Ex.~\ref{ex:artstein}, it follows that this weakened version of \cite[Thm~2]{coron1990necessary} is \emph{strictly} stronger than Theorem~\ref{th:generalized-brockett} in the special case of stabilizing a point in $\R^n$.
\begin{Prop}\label{prop:coron-compare}
Assume that \eqref{eq:weak-coron-condition} is satisfied in the setting of the weakened version of \cite[Thm~2]{coron1990necessary} stated above.
Then for any neighborhood $\cW\subset \Omega$ of $0\in \Omega \subset \R^n$, there exists a neighborhood $\cV\subset \R^n$ of $0\in \R^n$ such that, for any continuous adversary (vector field) $X\colon \cW\to \cV$,
\begin{equation}\label{eq:adversary-coron-compare}
f(\cW\times \R^m)\cap X(\cW)\neq \varnothing.
\end{equation} 
\end{Prop}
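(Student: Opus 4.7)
The plan is to establish the stronger fact that $f(\cW \times \R^m)$ itself contains some open ball around $0 \in \R^n$. Once this is shown, the proposition follows immediately: one takes $\cV$ to be such a ball, and then for any continuous adversary $X\colon \cW \to \cV$ the inclusion $X(\cW) \subset \cV \subset f(\cW \times \R^m)$, together with $\cW \neq \varnothing$ (since $0 \in \cW$), yields $f(\cW \times \R^m) \cap X(\cW) = X(\cW) \neq \varnothing$.

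To carry this out, first choose $\epsilon > 0$ small enough that the open Euclidean ball $B_\epsilon \coloneqq \{x \in \R^n \colon \norm{x} < \epsilon\}$ is contained in $\cW$. By \eqref{eq:weak-coron-condition} applied to this $\epsilon$, the map $f_*\colon H_{n-1}(\Sigma_\epsilon) \to H_{n-1}(\R^n \setminus \{0\}) \cong \Z$ is surjective, so I can fix a singular $(n-1)$-cycle $c$ supported in $\Sigma_\epsilon$ with $f_*[c]$ a generator. Because $n \geq 2$ and $B_\epsilon \times \R^m$ is contractible, its $(n-1)$st singular homology vanishes; hence, viewing $c$ as a cycle in $B_\epsilon \times \R^m$ via the inclusion $\Sigma_\epsilon \hookrightarrow B_\epsilon \times \R^m$, there exists a singular $n$-chain $d$ in $B_\epsilon \times \R^m$ with $\partial d = c$. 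Pushing forward by $f$ gives $\partial (f_*d) = f_*c$, and the support of $f_*d$ is contained in $f(B_\epsilon \times \R^m) \subset f(\cW \times \R^m)$.

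The key step is to show that the support of $f_*d$ contains an open ball around $0$. The support of $f_*c$ is a compact subset of $\R^n \setminus \{0\}$, so I can choose $\delta > 0$ with $B_\delta \coloneqq \{y \in \R^n \colon \norm{y} < \delta\}$ disjoint from this support. For each $y \in B_\delta$, the winding number (local degree) $w(f_*c, y)$ is well-defined and equals $w(f_*c, 0)$ by translation-invariance along the line segment from $0$ to $y$, which lies in $B_\delta$ and hence avoids the support of $f_*c$; in particular $w(f_*c, y) \neq 0$ since $[f_*c]$ is a generator. If, however, $y$ lay outside the support of $f_*d$, then $f_*d$ would be a singular chain in $\R^n \setminus \{y\}$ with boundary $f_*c$, forcing $[f_*c] = 0$ in $H_{n-1}(\R^n \setminus \{y\})$ and therefore $w(f_*c, y) = 0$---a contradiction. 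Thus $B_\delta$ is contained in the support of $f_*d$ and hence in $f(\cW \times \R^m)$, and setting $\cV = B_\delta$ establishes \eqref{eq:adversary-coron-compare}.

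The main obstacle is translating the abstract homological surjectivity in \eqref{eq:weak-coron-condition} into a concrete conclusion about the image of $f$: the hypothesis supplies only a singular cycle, not a sphere map, so the argument must take place at the level of singular chains. The essential ingredients are the contractibility of $B_\epsilon \times \R^m$, which produces the filling $d$, and the degree-theoretic constancy of $w(f_*c, \slot)$ on components of the complement of the support of $f_*c$, which forces this filling to cover a neighborhood of $0$.
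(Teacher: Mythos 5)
Your homological core is correct and cleanly executed: filling the cycle $c$ inside the contractible space $B_\epsilon\times\R^m$ and using constancy of the winding number of $f_*c$ on the component of $0$ in the complement of its support does prove that $f(B_\epsilon\times\R^m)$ contains a ball $B_\delta$ about $0$ --- i.e., that \eqref{eq:weak-coron-condition} implies a local form of Brockett's condition. The gap is in the reduction of the proposition to that claim. The conclusion \eqref{eq:adversary-coron-compare} is the specialization of \eqref{eq:generalized-brockett-adversary}, where $f$ and the adversary $X$ are identified with $\T\R^n\approx\R^n\times\R^n$--valued maps and the intersection is taken in $\T\R^n$; it is nonempty precisely when there exist $x\in\cW$ and $u\in\R^m$ with $f(x,u)=X(x)$, with the \emph{same} basepoint $x$ on both sides. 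This reading is forced by the proposition's role (it must deliver the condition of Theorem~\ref{th:generalized-brockett}, and the paper's own proof derives its contradiction from ``$f(x,u)\neq X(x)$ for all $(x,u)$''). Your argument yields only $X(\cW)\subset B_\delta\subset f(\cW\times\R^m)$ as subsets of $\R^n$, i.e., each value $X(x)$ is attained by $f$ at some possibly \emph{different} basepoint $x'$. The implication you would need --- that containment of a ball about $0$ in the image of $f$ forces the common-basepoint adversary condition --- is false in general: Ex.~\ref{ex:strictly-stronger} exhibits a planar vector field whose image contains a neighborhood of the origin yet which is defeated by the nonconstant adversary $X_\epsilon=\epsilon(\frac{\partial}{\partial x}-x\frac{\partial}{\partial y})$. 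So no argument routed solely through surjectivity of $f$ near $0$ can close this gap.

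The paper's proof keeps the adversary in play throughout: assuming $f(x,u)\neq X(x)$ for all $(x,u)\in B_\epsilon\times\R^m$, it uses a cutoff $\varphi$ and the map $\theta(x,y)=y-\varphi(y)X(x)$ to build the homotopy $h(t,(x,u))=\theta(tx,f(tx,tu))$, showing that $f$ restricted to a compact $K\subset\Sigma_\epsilon$ carrying a generator is nullhomotopic in $\R^n\setminus\{0\}$, contradicting \eqref{eq:weak-coron-condition}. Your chain-level machinery can be salvaged along the same lines: run the filling and winding-number argument not for $f$ but for the shifted map $(x,u)\mapsto\theta(x,f(x,u))$ (or, after choosing $\delta$ smaller than the distance from $f(\supp c)$ to $0$, for $(x,u)\mapsto f(x,u)-X(x)$, which is homotopic to $f$ on $\supp c$ through maps into $\R^n\setminus\{0\}$). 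A zero of that map is exactly a common-basepoint intersection, and that is what the nonvanishing class forces.
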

\begin{Rem}
Our proof is a minor modification of \cite[Sec.~2.(B)]{coron1990necessary}.
\end{Rem}
\begin{proof}
We assume \eqref{eq:weak-coron-condition} and want to show that the condition containing \eqref{eq:adversary-coron-compare} holds.
Let $\cW\subset \Omega\subset \R^n$ be any neighborhood of $0$ and fix $\epsilon > 0$ small enough that $B_\epsilon\subset \cW$, where $B_\epsilon\subset \R^n$ is the open ball of radius $\epsilon$ centered at $0$.
Let $K$ be a compact subset of $\Sigma_\epsilon\subset B_\epsilon\times \R^m$ such that 
\begin{equation}\label{eq:coron-1}
f_*(H_{n-1}(K))=H_{n-1}(\R^n \setminus \{0\}) \,\,(\cong \Z);
\end{equation}
the condition \eqref{eq:weak-coron-condition} implies the existence of such a compact set.
Since $K$ is a compact subset of $\Sigma_\epsilon$, there exists $\delta > 0$ such that
\begin{equation}\label{eq:coron-2}
\forall (x,u)\in K\colon  f(x,u)\not \in B_\delta.
\end{equation}

Since $B_\epsilon \subset \cW$, to show that the condition containing \eqref{eq:adversary-coron-compare} holds for some $\cV$ it suffices to establish the claim that, for any continuous adversary $X\colon  B_\epsilon \to B_\delta$, $X(B_\epsilon)\cap f(B_\epsilon\times \R^m) \neq \varnothing$ (taking $\cV= B_\delta$).
Indeed, if this is not the case then there exists a continuous adversary $X\colon B_\epsilon\to B_\delta$ such that
\begin{equation}\label{eq:coron-3}
\forall (x,u)\in B_\epsilon\times \R^m\colon f(x,u)\neq X(x).
\end{equation}
Let $\varphi\colon \R^n\to [0,1]$ be any continuous function satisfying $\varphi^{-1}(1)\supset B_\delta$ and $\varphi^{-1}(0)\supset f(K)$ \cite[Lem.~2.22]{lee2013smooth}, and define the continuous map $\theta\colon B_\epsilon \times \R^n\to \R^n$ by 
\begin{equation}\label{eq:theta-def}
\theta(x,y)\coloneqq y-\varphi(y)X(x).
\end{equation}
Since $sX(x)\in B_\delta$ for all $x\in B_\epsilon$ and $0 \leq s\leq 1$, it follows that $\theta(x,y) = 0$ if and only if $y\in B_\delta$ and $y = \varphi(y)X(x)$, which in turn holds if and only if $y = X(x)$.
Thus,
\begin{equation}\label{eq:coron-4}
\begin{split}
\forall x\in B_\epsilon, y\in \R^n\colon \theta(x,y)=0\iff y=X(x).
\end{split}
\end{equation}
It follows from \eqref{eq:coron-3} and \eqref{eq:coron-4} that $\theta(x,f(x,u))\neq 0$ is nonzero for all $x\in B_\epsilon$ and $u\in \R^m$, so the formula
$$h(t,(x,u))\coloneqq \theta(tx, f(tx,tu))$$
defines a continuous homotopy $h\colon [0,1]\times K\to \R^n\setminus \{0\}$ with $h(0,\slot)$ a constant map.
Moreover, \eqref{eq:theta-def} and the fact that $\varphi(f(K))=\{0\}$ imply that $h(1,\slot) = f|_{K}$, so $f|_K$ is nullhomotopic.
Thus, $f_*(H_{n-1}(K)) = \{0\}\subset H_{n-1}(\R^n\setminus \{0\})$, contradicting \eqref{eq:coron-1} and completing the proof. 
\end{proof}

\begin{Ex}\label{ex:artstein}
In this example we show that Theorem~\ref{th:generalized-brockett} can fail to detect that a point $\att = \{x_0\}$ is not stabilizable while the weakened version of \cite[Thm~2]{coron1990necessary} \emph{does} detect non-stabilizability.
Thus, this example together with Prop.~\ref{prop:coron-compare} imply that the weakened version of \cite[Thm~2]{coron1990necessary} stated above is strictly stronger than Theorem~\ref{th:generalized-brockett} under the assumptions of the present subsection.

Consider the system of ordinary differential equations (Fig.~\ref{fig:ex-4-streams})
\begin{equation}\label{eq:artstein}
\begin{split}
\dot{x} &= x^2-y^2\\
\dot{y} &= 2xy.
\end{split}
\end{equation}
on $\R^2$.\footnote{Alternatively, in terms of complex numbers $z=x+iy\in \C$, with $i=\sqrt{-1}$, the right side of \eqref{eq:ex-strictly-stronger-1} can be written as $g(z) = z^2$.}
Let $(x,y)\mapsto g(x,y)$ be the vector field defined by the right side of \eqref{eq:artstein}.
In order to apply the weakened version of \cite[Thm~2]{coron1990necessary}, which formally requires controls $u\in \R^m$ with $m\geq 1$, we fix any $m \geq  1$ and view \eqref{eq:artstein} as defining a trivial control system $(\R^2 \times \R^m, \R^2, p,g\circ p)$ ``without control'', where $p(x,y,u)\coloneqq (x,y)$ and the notation is as in \eqref{eq:control-system}, with $f = g\circ p$.
Here, however, we identify vector fields $\R^2 \to \T \R^2$ with maps $\R^2\to \R^2$ using the canonical identification $\T \R^2 \approx \R^2 \times \R^2$.
The origin of $\R^2$ is not asymptotically stable (hence not stabilizable) for $g$ since, e.g., it has index $2 \neq  1$.
We will show that (i) this is not detected by Theorem~\ref{th:generalized-brockett}, but (ii) it is detected by \cite[Thm~2]{coron1990necessary}.

We begin with the former claim.
Let $\cW\subset \R^2$ be an arbitrary neighborhood of the origin.
Since the origin is the unique zero of $g$ and since the origin has index $2\neq 0$ for $g$, there exists a neighborhood $\cV\subset \R^2$ of the origin such that, for any continuous adversary $X\colon \cW\to \cV$ taking values in $\cV$, the perturbed vector field $g-X$ has a zero in $\cW$.\footnote{This is because the index $2$ of $(0,0)$ for $g$ is the winding number (Brouwer degree \cite[p.~27]{milnor1965topology}) of the map $\frac{g}{\norm{g}}\colon \partial B \to \sph^1$, where $B$ is a small ball centered at $(0,0)$, and continuity implies that the winding number of $\frac{g-X}{\norm{g-X}}\colon \partial B \to S^1$ is well-defined and matches that of $\frac{g}{\norm{g}}$ if the norm of $X|_S$ is sufficiently small. Thus, the winding number of $\frac{g-X}{\norm{g-X}}$ is nonzero if $X$ is small enough, and this in turn implies that $g-X$ has at least one zero in $\interior(B)$ \cite[p.~28, Lem.~1]{milnor1965topology}.}
Thus, $f(p^{-1}(\cW))\cap X(\cW) = g(\cW)\cap X(\cW) \neq \varnothing$, so Theorem~\ref{th:generalized-brockett} cannot detect that the origin is not stabilizable (asymptotically stable for $g$).

\newcommand{\Rtz}{\R^2_{0}}
However, using the notation $\Rtz\coloneqq \R^2\setminus \{(0,0)\}$, the fact that the origin has index $2$ for $g$ implies that the induced map 
\begin{equation}\label{eq:g-star}
(g|_{\Rtz})_*\colon \Hom_1(\Rtz) \to \Hom_1(\Rtz)
\end{equation}
on singular homology sends a generator of $\Hom_1(\Rtz)\cong \Z$ to twice itself. 
In other words, $(g|_{\Rtz})_*\colon \Z\to \Z$ is the doubling map $k\mapsto 2k$, which is not surjective.
Since $p(\Sigma_\infty)= \Rtz$ and $f = g\circ p$, it follows that  $f_*(\Hom_1(\Sigma_\infty))\subset (g|_{\Rtz})_*(\Hom_1(\Rtz))\neq \Hom_1(\Rtz)$.
Thus, the weakened version of \cite[Thm~2]{coron1990necessary} implies that the origin is not stabilizable (asymptotically stable for $g$), as claimed. 
\begin{figure}
	\centering
	\includegraphics[width=0.35\linewidth]{./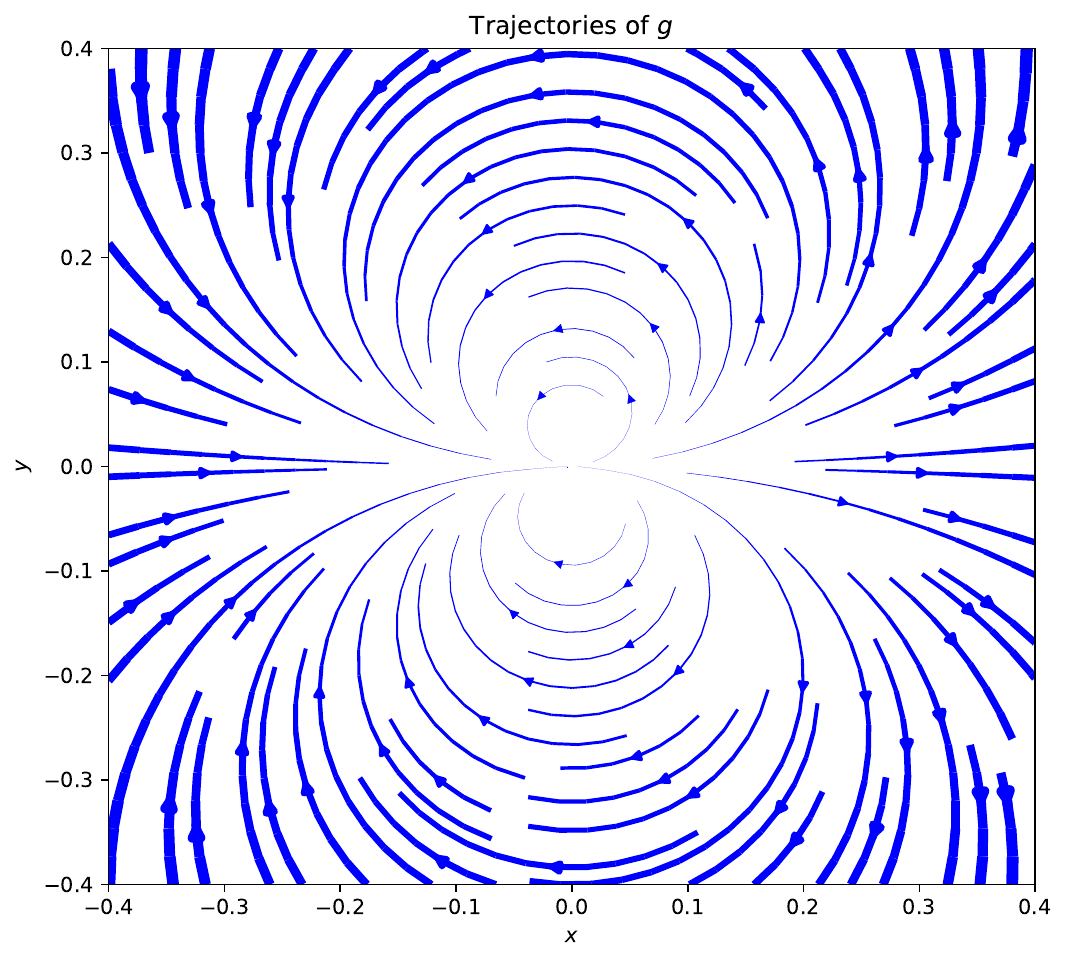}\caption{This figure illustrates Ex.~\ref{ex:artstein}.
	Shown are numerical approximations of trajectory segments of the vector field $g$ defined by the right side of \eqref{eq:artstein}. 
	For visualization purposes, the width of each trajectory is proportional to the norm of its instantaneous velocity.
	This plot was generated using the  function \texttt{streamplot} from the Python library matplotlib \cite{hunter2007matplotlib}.
	}\label{fig:ex-4-streams}
\end{figure}
\end{Ex}    

\section{Conclusion}\label{sec:conclusion}
We have generalized Brockett's necessary condition for feedback stabilization of points to one for feedback stabilization of general compact subsets having a nonzero Euler characteristic, where Euler characteristic is defined using \v{C}ech-Alexander-Spanier cohomology (Def.~\ref{def:euler-cech}).
This generalization furnishes a test which can be used to rule out stabilizability of a compact subset for the fairly general class of control systems \eqref{eq:control-system}.
Using this generalization, we have formulated an analogous necessary condition which can be used to test whether a control system can be made to operate safely relative to some subset of state space with a precompact complement having nonzero Euler characteristic.
As evidenced by \S \ref{sec:examples} and \ref{sec:applications}, both tests are readily applicable in a variety of concrete and fairly general situations.
However, especially for high-dimensional situations, it seems important to develop automated numerical approaches (perhaps partially based on \cite{kaczynski2004computational}) for performing the (co)homology and ``adversary'' computations needed to apply Theorems~\ref{th:generalized-brockett} and \ref{th:safety}.

In the special case that the compact subset under consideration for stabilization is a point, we showed in \S \ref{sec:compare} that our necessary condition (Theorem~\ref{th:generalized-brockett}) is strictly stronger than Brockett's (\cite[Thm~1.(iii)]{brockett1983asymptotic}), but is strictly weaker than one due to Coron (\cite[Thm~2]{coron1990necessary}) under certain mild assumptions.
Mansouri's necessary condition \cite[Thm~4]{mansouri2007local}, \cite[Thm~2.3]{mansouri2010topological} for stabilizability of submanifolds of $\R^n$ generalizes Coron's, and we expect that the former necessary condition is also strictly stronger than the corresponding specialization of ours (though ours does retain some advantages, as described in \S\ref{sec:related-work}).

Many systems (such as those in Ex.~\ref{ex:kinematic-unicycle-1}, \ref{ex:satellite}, \ref{ex:vert-rolling-disk}) evolve in non-Euclidean state spaces, to which Mansouri's results do not directly apply.
This raises the interesting prospect of generalizing Mansouri's results to non-Euclidean state spaces, and it would be similarly interesting to find generalizations of these results for stabilizing subsets more general than submanifolds (e.g., subsets such as a ``figure eight'') and for safety.
Arguably even more interesting is the prospect of devising necessary conditions which---unlike Theorems~\ref{th:generalized-brockett} and \ref{th:safety} and \cite[Thm~4]{mansouri2007local}, \cite[Thm~2.3]{mansouri2010topological}---can be used to test stabilizability and savability \emph{without} the assumption of nonzero Euler characteristic.
Such necessary conditions would be needed, for example, to test for stabilizability of topological circles (which have zero Euler characteristic) such as limit cycles.
Finally, it also seems important to develop analogous tests for discrete-time systems (building upon the work of \cite{lin1994design, kalabic2017mpc}) and, more generally \cite[Ex.~1]{kvalheim2021conley}, \emph{hybrid system}\footnote{See, e.g., \cite{simic2005towards,haghverdi2005bisimulation,Goebel_Sanfelice_Teel_2009,Johnson_Burden_Koditschek_2016,lerman2016category,culbertson2019formal}; more references can be found in \cite{kvalheim2021conley}.} models necessitated by the study of systems from robotics and biomechanics for which the making and breaking of contacts is an intrinsic feature \cite{koditschek2004mechanical, Koditschek_2021}.

   \subsection*{Dedication} We dedicate this paper to Anthony M. Bloch on the occasion of his 65th birthday.
   Kvalheim would like to thank Bloch for his mentorship and, in particular, for introducing him to Brockett's necessary condition and to geometric mechanics during an inspiring course taught by Bloch at the University of Michigan in 2014.
   Koditschek would like to thank Bloch for his inspirational work and many decades of kind, unstinting tutorial wisdom.

   \subsection*{Acknowledgments}
   This work is supported in part by the Army Research Office (ARO) under the SLICE Multidisciplinary University Research Initiatives (MURI) Program, award W911NF1810327, and in part by ONR grant N00014-16-1-2817, a Vannevar Bush Faculty Fellowship held by the second author, sponsored by the Basic Research Office of the Assistant Secretary of Defense for Research and Engineering.
   The authors gratefully acknowledge helpful conversations with Yuliy Baryshnikov,
   William Clark, George Council, Timothy Greco, Rohit Gupta, and Eugene Lerman. 
   We owe special gratitude to Clark for carefully reading the manuscript and making suggestions which improved its quality, and to Gupta for bringing relevant references to our attention. 
   Finally, we thank the two anonymous referees for useful suggestions.

   	\bibliographystyle{amsalpha}
   	\bibliography{ref}
   	
\appendix 

\section{Unique integrability, asymptotic stability, and Lyapunov functions}\label{app:uniq-int-lyap}
In this appendix we review some facts about continuous vector fields for the convenience of the reader. 
In addition to definitions, we recall here two key facts that we use: a continuous and uniquely integrable vector field (i) generates a unique maximal continuous local flow (Lem.~\ref{lem:cont-loc-int-vf-gen-local-flow}), for which (ii) any compact asymptotically stable subset possesses a (strict) proper $C^\infty$ Lyapunov function (Lem.~\ref{lem:converse-lyap}).  
   
\subsection{Unique integrability}\label{app:uniq-int}
Let $\st$ be a smooth manifold and $F$ be a continuous vector field on $\st$.
Consider the following ordinary differential equation:
\begin{equation}\label{eq:ode}
\dot{x} = F(x).
\end{equation}
For every initial condition $x_0\in \st$, the Peano existence theorem asserts existence (but not uniqueness) of a solution $t\mapsto x(t)$ of \eqref{eq:ode} defined on some open interval in $\R$ containing $0$ and satisfying $x(0)=x_0$ \cite[Sec.~II.2]{hartman2002ode}.
Such a solution is \concept{maximal} if it does not admit an extension to a solution of \eqref{eq:ode} defined on a strictly larger interval.
If maximal solutions of \eqref{eq:ode} happen to be unique, we say that the vector field $F$ is \concept{uniquely integrable}.
We remind the reader of the following fact, which follows from the Picard-Lindel\"{o}f theorem \cite[Sec.~II.1]{hartman2002ode}.
\begin{Rem}\label{rem:loc-lip-implies-uniq-int}
Every locally Lipschitz continuous vector field is uniquely integrable.
\end{Rem}

Solutions to an ordinary differential equation determined by a locally Lipschitz vector field depend continuously on the initial condition; this is also true when the vector field is merely continuous and uniquely integrable \cite[p.~94,~Thm~2.1]{hartman2002ode}.
Thus, in the same way that a smooth or locally Lipschitz vector field on a manifold gives rise to a unique maximal smooth or locally Lipschitz continuous local flow (e.g., see the proof of \cite[Thm~9.12]{lee2013smooth}), a continuous and uniquely integrable vector field $F$ gives rise to a unique maximal continuous local flow $\Phi\colon \dom(\Phi)\subset \R \times \st \to \st$.
Here $t\mapsto \Phi^t(x_0)$ is the (assumed unique) maximal solution to \eqref{eq:ode} with initial condition $x_0$; see \cite[pp.~211--212]{lee2013smooth} for the standard definition and properties of a maximal continuous local flow.	

We record this observation in the following.
\begin{Lem}\label{lem:cont-loc-int-vf-gen-local-flow}
If $F$ is a continuous and uniquely integrable vector field on $\st$, then there exists a unique maximal continuous local flow $\Phi$ on $\st$ generated by $F$.
\end{Lem}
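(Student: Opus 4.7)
The plan is to construct $\Phi$ pointwise from maximal solutions and then verify that the construction yields a continuous local flow; uniqueness will follow from unique integrability. Concretely, for each $x_0\in \st$ let $J(x_0)\subset \R$ be the maximal open interval containing $0$ on which the (assumed unique) maximal solution $t\mapsto \gamma_{x_0}(t)$ of \eqref{eq:ode} with $\gamma_{x_0}(0)=x_0$ is defined, and set
\begin{equation*}
\dom(\Phi) \coloneqq \{(t,x_0)\in \R\times \st \colon t\in J(x_0)\}, \qquad \Phi(t,x_0)\coloneqq \gamma_{x_0}(t).
\end{equation*}
By construction $\Phi^0=\id_\st$ and $\frac{\partial}{\partial t}\Phi^t(x_0)|_{t=0}=F(x_0)$, so $\Phi$ is generated by $F$.

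Next I would verify the cocycle/semigroup axioms. If $(s,x)\in \dom(\Phi)$ and $(t,\Phi^s(x))\in \dom(\Phi)$, then the curves $\tau\mapsto \gamma_{\Phi^s(x)}(\tau)$ and $\tau\mapsto \gamma_x(s+\tau)$ are both maximal solutions of \eqref{eq:ode} passing through $\Phi^s(x)$ at $\tau=0$; unique integrability forces them to coincide, which yields both $(t+s,x)\in \dom(\Phi)$ and $\Phi^{t+s}(x)=\Phi^t(\Phi^s(x))$. The converse inclusion $(t+s,x)\in \dom(\Phi)\Rightarrow (s,x),(t,\Phi^s(x))\in \dom(\Phi)$ is similar, using that a restriction of a solution is still a solution and then invoking maximality/uniqueness.

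The substantive step, and the one I expect to be the main obstacle, is showing that $\dom(\Phi)$ is open in $\R\times \st$ and that $\Phi$ is continuous on it. For this I would invoke the cited continuous-dependence result \cite[p.~94,~Thm~2.1]{hartman2002ode}: given $(t_0,x_0)\in \dom(\Phi)$, cover the compact arc $\Phi([0,t_0]\times \{x_0\})$ (or $[t_0,0]$ if $t_0<0$) by finitely many coordinate charts in which the continuous-dependence theorem applies, and iterate through overlapping charts to deduce that all solutions starting at points $x$ near $x_0$ exist on an open interval containing $[0,t_0]$ and depend continuously on $(t,x)$ there. This gives both openness of $\dom(\Phi)$ at $(t_0,x_0)$ and continuity of $\Phi$ at $(t_0,x_0)$; it also shows that unique integrability, via an elementary patching argument on the overlaps, transfers consistently between charts on a manifold (since the vector field is intrinsic and solutions are defined invariantly).

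Finally, maximality and uniqueness of $\Phi$: each slice $t\mapsto \Phi^t(x_0)$ is maximal by construction, so $\Phi$ admits no extension to a strictly larger open set. If $\Psi$ is any other continuous local flow generated by $F$, then for each $x_0$ the curve $t\mapsto \Psi^t(x_0)$ is a solution of \eqref{eq:ode} with $\Psi^0(x_0)=x_0$, hence by unique integrability it is a restriction of $\gamma_{x_0}$; thus $\Psi$ is a restriction of $\Phi$, proving uniqueness of the maximal continuous local flow.
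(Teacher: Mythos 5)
Your proposal is correct and follows essentially the same route as the paper, which simply observes that the standard construction of the maximal flow (as in the proof of \cite[Thm~9.12]{lee2013smooth}) goes through verbatim once one has continuous dependence on initial conditions for continuous, uniquely integrable vector fields \cite[p.~94,~Thm~2.1]{hartman2002ode}. You have merely spelled out the details (flow axioms from uniqueness of maximal solutions, openness of the domain and continuity via a chart-covering iteration of the continuous-dependence theorem) that the paper leaves to the cited references.
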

	
\subsection{Asymptotic stability}\label{app:lyap}
Let $F$ be a continuous and uniquely integrable vector field on the smooth manifold $\st$.
A subset $\att \subset \st$ is \concept{invariant} if, for all $x_0\in \att$, the unique maximal solution to \eqref{eq:ode} with $x(0)=x_0$ is defined and belongs to $\att$ for all time ($x(t)\in \att$ for all $t\in \R$).
A compact invariant subset $\att\subset \st$ is \concept{Lyapunov stable} if, for every open set $U\supset \att$, there exists an open set $V\supset \att$ such that $x(t)\in U$ for all $t\geq 0$ when $x(0)\in V$.
A compact invariant subset $\att\subset \st$ is \concept{asymptotically stable} if $A$ is Lyapunov stable and there is an open set $W\supset A$ such that, if $x_0\in W$ and $x(t)$ is the unique maximal solution to \eqref{eq:ode} with $x(0)=x_0$, 
\begin{equation}\label{eq:att-converge}
\lim_{t\to\infty} \dist{x(t)}{A}=0.
\end{equation}
Here $\dist{\slot}{\slot}$ is any metric (distance) compatible with the topology of $\st$.
The \concept{basin of attraction} $B(\att)$ of an asymptotically stable subset $\att$ is an open set defined to be the largest possible set $W$ with the above property.
We say that $\att \subset \st$ is \concept{globally asymptotically stable} if $\att$ is asymptotically stable and $B(\att)=\st$.

A useful result is the following \cite[Lem.~1.6]{hurley1982attractors} (see also \cite{milnor1985conceptII,Milnor_2006}). 
Let $\Phi$ be the unique maximal continuous local flow generated by $F$ (Lem.~\ref{lem:cont-loc-int-vf-gen-local-flow}).
Then a compact invariant set $\att\subset \st$ is asymptotically stable if and only if $A$ has an open neighborhood $U$ with $[0,\infty)\times U\subset \dom(\Phi)$ and
\begin{equation}\label{eq:att-alt-def}
\att = \bigcap_{t>0}\Phi^{t}(U).
\end{equation}
This is one way (of several) to see that, despite the presence of the distance function in \eqref{eq:att-converge}, asymptotic stability of a \emph{compact} subset is a well-defined, metric-independent notion.

\begin{Rem}\label{rem:noncompact-attractor}
In this paper, we only discuss asymptotic stability for \emph{compact} subsets $\att\subset \st$.
If $A$ is noncompact then, as explained by Wilson \cite[pp.~425--426]{wilson1969smooth}, $A$ does not have a countable neighborhood basis, so a reasonable definition of asymptotic stability for $A$ requires the presence of a specific metric on $\st$.
If $A$ is compact, however, then asymptotic stability of $\att$ is a purely topological notion as demonstrated by the characterization \eqref{eq:att-alt-def} above.
\end{Rem}
If $\att$ is asymptotically stable for the continuous and uniquely integrable vector field $F$, we say that $V\colon B(\att)\to [0,\infty)$ is a (strict) proper $C^\infty$ \concept{Lyapunov function} for $\att$ if
\begin{itemize}
\item $V\colon B(A)\to [0,\infty)$ is $C^\infty$,
\item for each $c\in [0,\infty)$ the sublevel set $\{x\in B(\att)\colon V(x)\leq c\}$ is compact,
\item $\att = \{x\in B(\att)\colon V(x)=0\}$, and
\item the Lie derivative $L_F V$ satisfies $L_F V \leq 0$ and $L_F V(x)< 0$ when $x\not \in \att$.
\end{itemize}
We will use the following result.
\begin{Lem}[{\cite{wilson1969smooth,fathi2019smoothing}}]\label{lem:converse-lyap}
Let $\att\subset \st$ be a compact set which is asymptotically stable for the continuous and uniquely integrable vector field $F$ on $\st$.
Then there exists a proper $C^\infty$ Lyapunov function $V$ for $\att$.
\end{Lem}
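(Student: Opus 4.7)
My plan is to follow the standard two-stage strategy used by Wilson and refined for continuous uniquely integrable vector fields by Fathi--Siconolfi: first build a continuous proper Lyapunov function via the flow, and then smooth it while preserving strict decrease along trajectories.

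For the first stage, let $\Phi$ be the unique maximal continuous local flow generated by $F$ (Lem.~\ref{lem:cont-loc-int-vf-gen-local-flow}). Since $\att$ is asymptotically stable with basin $B(\att)$, the restriction of $\Phi$ to $[0,\infty)\times B(\att)$ is defined and, by Lyapunov stability and the attractivity condition \eqref{eq:att-converge}, every forward orbit starting in $B(\att)$ has $\att$ as its $\omega$-limit set. Fix any metric $d$ on $\st$ compatible with its topology and set $\omega(x)\coloneqq \min\{1,d(x,\att)\}$; this is continuous, bounded, and vanishes exactly on $\att$. Following Wilson, I would define a candidate continuous Lyapunov function by a formula such as
\begin{equation*}
V_0(x)\coloneqq \sup_{t\geq 0} k(t)\,\omega(\Phi^t(x)) \quad\text{or}\quad V_0(x)\coloneqq \int_0^\infty k(t)\,\omega(\Phi^t(x))\,dt,
\end{equation*}
where $k\colon [0,\infty)\to (0,\infty)$ is a continuous weight chosen to force the expression to be finite, continuous on $B(\att)$, and proper (sublevel sets are compactly contained in $B(\att)$). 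Using asymptotic stability one checks: $V_0^{-1}(0)=\att$; $V_0$ is strictly decreasing along each nonconstant forward orbit (so the appropriate upper/lower Dini derivative along $F$ is strictly negative off $\att$); and sublevel sets of $V_0$ are compact. These are the standard verifications carried out in \cite{wilson1969smooth}.

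For the second stage, $V_0$ must be replaced by a $C^\infty$ function $V$ that retains the three properties of Def.~(the Lyapunov function properties from App.~\ref{app:lyap}), including the pointwise condition $L_F V(x)<0$ for $x\notin \att$. Away from $\att$ one can convolve $V_0$ in coordinate charts and glue with a smooth partition of unity, and on each compact set disjoint from $\att$ the strict decrease along the flow can be made uniform; but this does not immediately translate to strict negativity of a pointwise Lie derivative of the smoothed function when $F$ is only continuous. The main obstacle is exactly this smoothing step, since $V_0$ need not be differentiable and $F$ is not smooth, so naive mollification gives no control on $L_F V$. This is the gap filled by \cite{fathi2019smoothing}, whose smoothing procedure quantifies the strict decrease of $V_0$ along $\Phi$ into a flow-invariant condition that is preserved by a carefully designed mollification adapted to $\Phi$. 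Applying their theorem to $V_0$ on $B(\att)$ produces the desired $C^\infty$ proper Lyapunov function $V$ with $L_F V<0$ off $\att$, completing the proof; properness of $V$ is inherited from properness of $V_0$ and the fact that the smoothing can be taken $C^0$-close on compact subsets of $B(\att)$.
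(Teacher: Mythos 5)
The paper gives no proof of this lemma at all --- it is stated as a known result with citations to Wilson and to Fathi--Siconolfi --- and your two-stage outline (a continuous proper Lyapunov function built from the flow \`a la Wilson/Massera, followed by a flow-adapted smoothing that preserves strict decrease, which is exactly the contribution of \cite{fathi2019smoothing} needed when $F$ is merely continuous and uniquely integrable) is precisely the strategy of those cited references. So the proposal is correct and takes essentially the same approach as the paper, with the honest caveat that the hard smoothing step is deferred to \cite{fathi2019smoothing} rather than reproved, just as the paper itself does.
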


\section{Low-dimensional manifolds with boundary and zero Euler characteristic}\label{app:low-dim-mfld-euler-zero}
In this appendix we explain why (in Lem.~\ref{lem:1d-manifolds-euler} and \ref{lem:2d-manifolds-euler}) the only compact connected manifolds with (or without) boundary having zero Euler characteristic and dimension less than or equal to two are the circle $\sph^1$, the torus $\tor^2$, the cylinder $\sph^1\times [0,1]$, the M\"{o}bius band (with boundary) $\mob$, and the Klein bottle $\klein$.
To do this, we rely on the standard classification theorems for compact connected manifolds of dimensions $1$ and $2$.
(The only connected $0$-dimensional manifold is a point, which has Euler characteristic equal to $1$.)

In the following two statements, a $C^0$ diffeomorphism means a homeomorphism.
\begin{Lem}\label{lem:1d-manifolds-euler}
Let $k\in \N_{\geq 0}\cup \{\infty\}$ and $\st$ be a compact connected $1$-dimensional $C^k$ manifold with (or without) boundary.
Then if the Euler characteristic $\chi(M)=0$, $\st$ is $C^k$ diffeomorphic to a circle. 
\end{Lem}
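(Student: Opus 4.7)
The plan is to reduce the statement immediately to the classification theorem for compact connected $1$-dimensional $C^k$ manifolds with boundary, which asserts that up to $C^k$ diffeomorphism there are exactly two such manifolds: the circle $\sph^1$ (boundaryless) and the closed interval $[0,1]$ (with boundary consisting of two points). A standard reference for the smooth and topological versions is \cite{milnor1965topology} (appendix) or the textbook of Guillemin--Pollack; the result holds uniformly for every $k\in \N_{\geq 0}\cup\{+\infty\}$.

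Given this classification, it suffices to compute the Euler characteristic of each model and eliminate the interval. First, $\chi(\sph^1)=0$ since $\sph^1$ admits a finite CW decomposition with one $0$-cell and one $1$-cell, so that formula \eqref{eq:euler-CW} gives $1-1=0$ (alternatively, $\sph^1$ carries a nowhere-vanishing continuous vector field, so Lem.~\ref{lem:poincare-hopf} also yields $\chi(\sph^1)=0$). Second, $\chi([0,1])=1$ since $[0,1]$ admits a finite CW decomposition with two $0$-cells and one $1$-cell, and $2-1=1$ (equivalently, $[0,1]$ is contractible). By Rem.~\ref{rem:euler-equiv-defns}, these CW-theoretic computations agree with the \v{C}ech--Alexander--Spanier definition used in Def.~\ref{def:euler-cech}, since compact manifolds with boundary are paracompact, Hausdorff, and locally contractible.

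Thus, under the hypothesis $\chi(\st)=0$, the interval model is excluded and $\st$ must be $C^k$ diffeomorphic to $\sph^1$. No step of this argument presents a real obstacle; the only substantive input is the classification theorem itself, which is entirely standard, so the proof amounts to invoking it and ruling out one of the two cases by an Euler characteristic computation.
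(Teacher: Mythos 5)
Your proposal is correct and follows essentially the same route as the paper: invoke the classification of compact connected $1$-manifolds with boundary and rule out $[0,1]$ by its Euler characteristic. The only (minor) difference is that the paper does not assert the classification ``uniformly in $k$'' as you do; for $k\in \N_{\geq 1}$ it first applies the smoothing theorem \cite[p.~52,~Thm~2.10(a)]{hirsch1976differential} to reduce to the $C^\infty$ case, which is the cleaner way to justify the intermediate regularities since the standard references you cite treat only the topological and smooth classifications.
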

\begin{proof}
    First assume that $k = 0$ (resp. $k=\infty$).
    The classification theorem for topological (resp. $C^\infty$) $1$-manifolds with boundary implies that $\st$ is homeomorphic (resp. $C^\infty$ diffeomorphic) to $\sph^1$ or to $[0,1]$. 
    The former has zero Euler characteristic while the latter has Euler characteristic equal to $1\neq 0$.
    Thus, if $\chi(M)= 0$, $M$ must be homeomorphic (resp. $C^\infty$ diffeomorphic) to $\sph^1$.
    Finally, $M$ is $C^k$ diffeomorphic to a $C^\infty$ manifold in the case that $k\in \N_{\geq 1}$ \cite[p.~52,~Thm~2.10(a)]{hirsch1976differential}, so in that case the lemma follows from the case $k=\infty$.
    \end{proof}

    \begin{Lem}\label{lem:2d-manifolds-euler}
    Let $k\in \N_{\geq 0}\cup \{\infty\}$ and $M$ be a compact connected $2$-dimensional $C^k$ manifold with (or without) boundary $\partial M$.
    Then if the Euler characteristic $\chi(M)=0$, $M$ is $C^k$ diffeomorpic to one of the following: the $2$-torus $\tor^2$, the cylinder $\sph^1\times [0,1]$, the M\"{o}bius band (with boundary) $\mob$, or the Klein bottle $\klein$.
    \end{Lem}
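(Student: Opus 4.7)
The plan is to reduce this classification to the standard classification theorem for closed surfaces, then enumerate which closed surfaces $\hat{M}$ and boundary-component counts $b$ can satisfy $\chi(M)=0$. As in the proof of Lem.~\ref{lem:1d-manifolds-euler}, the smoothing theorem \cite[p.~52,~Thm~2.10(a)]{hirsch1976differential} reduces the cases $k\in\N_{\geq 1}$ to $k=\infty$, and the topological and smooth classifications of compact surfaces coincide, so it suffices to argue in a single category.

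First I would close up the boundary. By Lem.~\ref{lem:1d-manifolds-euler}, $\partial M$ consists of $b\geq 0$ circles. Let $\hat{M}$ be the closed connected surface obtained by attaching a closed $2$-disk to each boundary component (using a collar neighborhood in the smooth case). Inclusion--exclusion together with $\chi(D^2)=1$ and $\chi(\sph^1)=0$ yields
\[
\chi(\hat{M}) \;=\; \chi(M) + b,
\]
so the hypothesis $\chi(M)=0$ becomes $\chi(\hat{M})=b$.

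Next I would invoke the classification of closed surfaces: $\hat{M}$ is diffeomorphic either to $\sph^2$, to an orientable genus-$g$ surface $\Sigma_g$ with $g\geq 1$, or to a non-orientable surface $N_k$ (the connected sum of $k$ copies of $\R\mathrm{P}^2$) with $k\geq 1$, having Euler characteristics $2$, $2-2g$, and $2-k$ respectively. Since $\chi(\hat{M})\leq 2$, the equation $\chi(\hat{M})=b$ admits only the solutions: $b=0$ with $\hat{M}\in\{\tor^2,\klein\}$ (from $g=1$ and $k=2$); $b=1$ with $\hat{M}=\R\mathrm{P}^2$; and $b=2$ with $\hat{M}=\sph^2$. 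The case $b\geq 3$ is impossible since $\chi(\hat M)\leq 2$.

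Recovering $M$ by removing the attached open disks then yields, respectively, $M\in\{\tor^2,\klein\}$ in the $b=0$ case; $M\cong\R\mathrm{P}^2\setminus(\text{open disk})\cong\mob$ in the $b=1$ case; and $M\cong\sph^2\setminus(\text{two disjoint open disks})\cong\sph^1\times[0,1]$ in the $b=2$ case. The main (though routine) point requiring justification is that $M$ is determined up to $C^k$ diffeomorphism by the pair $(\hat{M},b)$, independent of how the disks were embedded; this I expect to be the only nontrivial step, and it follows from the standard disk lemma, which asserts that any two $C^k$ embeddings of a disjoint union of closed $2$-disks into the interior of a connected surface are ambient isotopic, collapsing each case to a canonical model.
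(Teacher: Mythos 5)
Your proof is correct, and the case enumeration ($b=0$ giving $\tor^2$ or $\klein$, $b=1$ giving $\mob$, $b=2$ giving $\sph^1\times[0,1]$, $b\geq 3$ impossible) matches the paper's exactly, but you reach it by a genuinely different route. The paper invokes the classification theorem for compact surfaces \emph{with boundary} directly: $M$ is a sphere or a connected sum of $g$ tori (resp.\ $g$ projective planes) with $b$ open disks removed, and then computes $\chi(M)=2-2g-b$ (resp.\ $2-g-b$) by inclusion--exclusion. You instead cap off the $b$ boundary circles with disks to obtain a closed surface $\hat M$ with $\chi(\hat M)=\chi(M)+b$, apply the classification of \emph{closed} surfaces, and then recover $M$ by removing the disks again. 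The two are close cousins---the with-boundary classification is usually proved by exactly your capping construction---but the bookkeeping differs: the theorem the paper cites already packages the fact that the diffeomorphism type of the result depends only on the closed surface and the number of disks removed, whereas your argument must supply that separately, which is precisely the disk-lemma/ambient-isotopy step you correctly flag as the one nontrivial point (and which, in dimension $2$, holds in both the smooth and topological categories, so your reduction to a single category is sound). Your version is marginally more self-contained if one takes the closed-surface classification as the primitive; the paper's is shorter because it cites the stronger off-the-shelf statement. Both reductions of $k\in\N_{\geq 1}$ to $k=\infty$ via the smoothing theorem agree.
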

    \begin{proof}
    We first note that every $C^k$ manifold with $k\in \N_{\geq 1}$ is $C^k$ diffeomorphic to a $C^\infty$ manifold \cite[p.~52,~Thm~2.10(a)]{hirsch1976differential} so that, as in the proof of Lem.~\ref{lem:1d-manifolds-euler}, it suffices to consider the case of $C^0$ and $C^\infty$ surfaces with boundary.

    Now, if such a surface $M$ with boundary is orientable, then the classification theorem for surfaces with boundary implies that $M$ is either homeomorphic or ($C^\infty$) diffeomorphic to a sphere or to a connected sum of $g\in \N_{\geq 1}$ tori with some finite number $b\in \N_{\geq 0}$ of open disks removed \cite[Thm~6.15,~Problem~6-5]{lee2010introduction}, \cite[p.~205,~Thm~3.7]{hirsch1976differential}. 
    Define $g$ to be zero in the case of the sphere.
    The Euler characteristic of a connected sum of $g$ tori is $2-2g$, and the Euler characteristic of a sphere is $2$ \cite[Prop.~6.19]{lee2010introduction}.
    Since the Euler characteristic of a disk is $1$ and the Euler characteristic of a circle is $0$, the inclusion-exclusion property of the Euler characteristic \cite[p.~205, Ex.~B.2]{spanier1966algebraic} implies the following formula:
    $$\chi(M) = 2-2g-b.$$
    We see that $\chi(M) = 0$ implies that either (i) $g = 1$ and $b = 0$ or (ii) $g = 0$ and $b = 2$.
    In the first case, $M$ is homeomorphic or diffeomorphic to $\tor^2$.
    In the second case, $M$ is homeomorphic or diffeomorphic to a sphere with two open disks removed, which is homeomorphic or diffeomorphic to $\sph^1\times [0,1]$.
    
    In the case that $M$ is nonorientable, $M$ is homeomorphic or diffeomorphic to the connected sum of $g\in \N_{\geq 1}$ projective planes with $b\in \N_{\geq 0}$ open disks removed \cite[Thm~6.15, Problem 6-5]{lee2010introduction}, \cite[p.~206, Thm~3.10]{hirsch1976differential}.
    The Euler characteristic of a connected sum of $g$ projective planes is $2-g$ \cite[Prop.~6.19]{lee2010introduction}, so---as in the orientable case---the inclusion-exclusion property of the Euler characteristic implies the formula 
    $$\chi(M) = 2-g-b.$$
    We see that $\chi(M) = 0$ implies that either (i) $g=1$ and $b = 1$ or (ii) $g=2$ and $b = 0$.
    Since the projective plane can be obtained by gluing a disk along its boundary circle to the boundary of a M\"{o}bius band \cite[Problem~6-2]{lee2010introduction}, \cite[p.~29, Ex.~15]{hirsch1976differential}, in the first case $M$ is homeomorphic or diffeomorphic to a M\"{o}bius band.
    In the second case $M$ is homeomorphic or diffeomorphic to a Klein bottle \cite[Lem.~6.16]{lee2013smooth}, \cite[p.~192]{hirsch1976differential}.
    This completes the proof.
    \end{proof}

\end{document}